%
\documentclass[12pt, a4paper]{amsart}

\usepackage{graphicx}
\usepackage{wrapfig}
\usepackage{comment}
\usepackage{lipsum}

\usepackage{pifont}

\usepackage[
unicode=true,
plainpages = false,
pdfpagelabels,
bookmarks=true,
bookmarksnumbered=true,
bookmarksopen=true,
breaklinks=true,
backref=false,
colorlinks=true,
linkcolor = DarkBlue,
urlcolor  = DarkBlue,
citecolor = DarkRed,
anchorcolor = green,
hyperindex = true,
hyperfigures
]{hyperref}
\hypersetup{
pdftitle={An explicit embedding of the group Q into a finitely presented group},
pdfauthor={V.H. Mikaelian},
pdfsubject={An explicit embedding of the group Q into a finitely presented group}
}

\usepackage[dvipsnames]{xcolor}
\colorlet{LightRubineRed}{RubineRed!70!}
\colorlet{Mycolor1}{green!10!orange!90!}
\definecolor{DarkRed}{HTML}{cc0000}
\definecolor{ChapterHeadColor}{HTML}{cc0000}
\definecolor{PartHeadColor}{HTML}{cc0000}
\definecolor{DarkBlue}{HTML}{0000cc}
\definecolor{QuoteColor}{HTML}{665665}

\usepackage[top=2.4cm, bottom=2.4cm, outer=2.5cm, inner=2.5cm, heightrounded
]{geometry}

\newcommand{\B}{{\mathcal B}}
\renewcommand{\S}{{\mathcal S}}
\newcommand{\Zz}{{\mathcal Z}}
\newcommand{\E}{{\mathcal E}}

\newcommand{\X}{{\mathfrak X}}
\newcommand{\Y}{{\mathfrak Y}}

\newcommand{\BB}{\boldsymbol{\mathscr{S}}}

\newcommand{\Z}{{\mathbb Z}}
\newcommand{\Q}{{\mathbb Q}}

\newcommand{\x}{{\mathbf x}}
\newcommand{\y}{{\mathbf y}}

\newcommand{\1}{\{1\}}

\newcommand{\bigast}{\textrm{\footnotesize\ding{91}}}

\usepackage[bitstream-charter]{mathdesign}


\DeclareSymbolFont{cmsymbols}{OMS}{cmsy}{m}{n}
\SetSymbolFont{cmsymbols}{bold}{OMS}{cmsy}{b}{n}
\DeclareSymbolFontAlphabet{\mathcal}{cmsymbols}


\theoremstyle{plain}
\newtheorem{Theorem}{Theorem}[section]
\newtheorem{Lemma}[Theorem]{Lemma}
  
\newtheorem{Corollary}[Theorem]{Corollary}

\theoremstyle{definition}

\theoremstyle{remark}
\newtheorem{Remark}[Theorem]{Remark} 
 
\newtheorem{Agreement}[Theorem]{Agreement}

\numberwithin{equation}{section}

\usepackage[bitstream-charter]{mathdesign}


\DeclareSymbolFont{cmsymbols}{OMS}{cmsy}{m}{n}
\SetSymbolFont{cmsymbols}{bold}{OMS}{cmsy}{b}{n}
\DeclareSymbolFontAlphabet{\mathcal}{cmsymbols}

\begin{document}


\subjclass{20F05, 20E06, 20E07.}
\keywords{Recursive group, finitely presented group, embedding of a group, benign subgroup, free product of groups with amalgamated subgroup, HNN-extension of a group}

\title[On explicit embeddings of $\Q$]{\large On explicit embeddings of $\Q$ into finitely\\ presented groups}

\author{V. H. Mikaelian
}

\begin{abstract}
Explicit embeddings of the group $\Q$ into a finitely presented group $\mathcal{Q}$ and into a $2$-generator finitely presented group 
$T_{\!\mathcal{Q}}$ are suggested. 
The constructed embeddings reflect questions mentioned by Johnson, 
Bridson, de la Harpe in the literature in late 1990s about possibility of such embeddings for $\Q$. 
Technique used  here is based on the methods with integer-valued sequences sets used by Higman, and with specific free constructions of groups, including free product with amalgamation, HNN-extension, an auxiliary technical structure of $\bigast$-construction, etc.  
\end{abstract}

\date{\today}

\maketitle

\setcounter{tocdepth}{1}

\let\oldtocsection=\tocsection
\let\oldtocsubsection=\tocsubsection
\let\oldtocsubsubsection=\tocsubsubsection
\renewcommand{\tocsection}[2]{\hspace{-12pt}\oldtocsection{#1}{#2}}
\renewcommand{\tocsubsection}[2]{\footnotesize \hspace{6pt} \oldtocsubsection{#1}{#2}}
\renewcommand{\tocsubsubsection}[2]{ \hspace{42pt}\oldtocsubsubsection{#1}{#2}}

{\footnotesize \tableofcontents}

\section{Introduction}

\noindent
Our objective is to construct explicit embeddings of the additive group of rational numbers $\Q$ into certain finitely presented groups.
This subject belongs to the context of the celebrated Higman Embedding Theorem \cite{Higman Subgroups of fP groups} stating that a finitely generated group 
can be embedded into a finitely presented group if and only if it is recursive.  Under a \textit{recursive} group we understand a group 
with at most countable (effectively enumerable) generators and with a recursively enumerable set of defining relations.
A newer term for recursive is \textit{computable}, but we are going to stick to the traditional notation here. We namely prove:

\begin{Theorem}
\label{TH embedding of Q into GP group}
There is an explicit embedding of the additive group of rational numbers $\Q$ into a finitely presented group explicitly given by its generators and defining relations.
\end{Theorem}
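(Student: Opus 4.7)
The plan is to follow the constructive skeleton of Higman's Embedding Theorem while keeping every intermediate step concrete enough to write out the generators and defining relators of the final finitely presented group explicitly.

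First I would fix a convenient recursive presentation of $\Q$: take generators $b_1,b_2,\ldots$ with $b_n$ representing $1/n!$, subject to $b_n=b_{n+1}^{n+1}$ for every $n\geq 1$ together with the commutator relations $[b_m,b_n]=1$. These relations are indexed in a uniformly computable way, so the normal closure of the relators in the free group on $\{b_1,b_2,\ldots\}$ is visibly recursively enumerable, and the exponent pattern $(2,3,4,\ldots)$ can be coded as an integer-valued sequence set in exactly the sense used by Higman in his original argument.

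Next, because the machinery is cleanest in the $2$-generator setting, I would embed the presented $\Q$ into a $2$-generator recursive group via a Higman--Neumann--Neumann trick, so that all subsequent amalgamated free products and HNN-extensions take place inside a single ambient $2$-generator free group $F$. The heart of the proof is then to show that the recursively enumerable normal subgroup $R\normal F$ encoding the defining relations of the image of $\Q$ is \emph{benign}, i.e.\ embeds into a finitely generated subgroup of a finitely presented overgroup in a recoverable manner. The $\bigast$-construction promised in the abstract presumably serves here as an auxiliary bookkeeping device that allows one to iterate HNN-extensions while tracking which finitely generated subgroups remain benign at each step, and hence to pass cleanly from sequence-set coding back to group-theoretic data. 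The finitely presented group $\mathcal{Q}$ then arises as a final HNN-extension whose stable letter conjugates $R$ into the trivial subgroup, killing exactly the intended relations while leaving a finite presentation, and the embedding $\Q\hookrightarrow\mathcal{Q}$ is obtained by chasing each $b_n$ through the chain of constructions.

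The step I expect to be the main obstacle is the bookkeeping at each stage: one has to verify that the associated subgroups in each HNN-extension are genuinely isomorphic via the intended identification so that Britton's Lemma rules out unintended collapse, check that benignity is preserved through every intermediate amalgamation, and ensure that only finitely many stable letters and relators survive in the presentation at the end. Producing explicit generating sets for each benign subgroup along the way, rather than invoking Higman's lemmas as black boxes, is precisely what upgrades the existence statement into the explicit embedding demanded by the theorem.
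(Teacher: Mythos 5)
Your overall skeleton matches the paper: start from the recursive presentation of $\Q$ via $1/n!$, pass to a $2$-generator recursive overgroup $T_\Q$, code its relations as integer sequences, verify benignity of the associated subgroup, and finish with the Higman Rope Trick. But two of the load-bearing ingredients are stated incorrectly in a way that would derail the proof if followed literally.

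First, the object that must be shown benign is \emph{not} the normal closure $R\normal F$ of the relators of $T_\Q$. The normal closure is a normal subgroup of infinite rank and plays no role in the rope trick; what Higman's method (and this paper) requires is that the plain subgroup $W_{\mathcal T}=\langle w_k(p,q)\mathrel{|} k\ge 2\rangle$ generated by the relator words, \emph{not} normally closed, be benign in the free group $\langle p,q\rangle$. The entire sequence-coding machinery ($A_{\mathcal T}\le\langle a,b,c\rangle$, then $Z_{\mathcal T}\le\langle z,m,n\rangle$, then $W_{\mathcal T}\le\langle p,q\rangle$) is calibrated to that non-normal subgroup; benignity of a normal closure is a different and generally much harder statement and is not what is proved. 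Note also that the sequence-coding naturally lives in a rank-$3$ free group $\langle a,b,c\rangle$ (you need a conjugating letter $c$ to produce the $b_i=b^{c^i}$), so confining everything to a single rank-$2$ ambient free group as you suggest does not match the construction.

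Second, the final step cannot work as you describe it. You cannot build an HNN-extension ``whose stable letter conjugates $R$ into the trivial subgroup'': an HNN stable letter induces an isomorphism between its associated subgroups, so it can never carry a nontrivial subgroup to $\{1\}$. What the rope trick actually does is the following. Form the finitely presented $K_{W_{\mathcal T}} *_{L_{W_{\mathcal T}}} t$, note that inside it $\langle p,q\rangle$ and $\langle p,q\rangle^t$ generate the amalgam $\langle p,q\rangle *_{W_{\mathcal T}}\langle p,q\rangle^t$, take the direct product with $T_\Q=F_2/\mathscr Y$, and introduce a stable letter $s$ whose action sends $v(p,q)\mapsto\bigl(v(p,q),\,\mathscr Y v(x,y)\bigr)$ on the first factor of the amalgam and acts identically on the second. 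Because these two maps agree on the amalgamated subgroup $W_{\mathcal T}$ precisely \emph{because} the relators are trivial in $T_\Q$, the infinitely many relators $w_k(x,y)$ of $T_\Q$ become derivable from the four relations describing $s$ together with the finitely many relations of $K_{W_{\mathcal T}}$, and hence may be dropped. Nothing is ``killed by conjugation''; rather, an infinite family of relations is shown to be redundant modulo a finite set. Without this mechanism the proposal has no route from a finitely generated group to a finitely presented one, and that is exactly the step the theorem is about.
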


We are going to build two instances of the finitely presented group promised in Theorem~\ref{TH embedding of Q into GP group}. The first is the group $\mathcal Q$ given by $98$ generators and a large number of defining relations listed in point~\ref{SU Writing the explicit  list of generators and relations for Q}.
The explicit embedding $\varphi: \Q \to \mathcal{Q}$ is given in \eqref{EQ define varphi} as the composition of $\alpha$ from \eqref{EQ define alpha} and of $\beta$ from \eqref{EQ define beta}.
\;
And the second instance is the group $T_{\!\mathcal{Q}}$ with only $2$ generators and again a large number of defining relations, see point~\ref{SU Writing the explicit  list of generators and relations for T cal Q} and the presentation \eqref{EQ T cal Q}.
The explicit embedding $\psi:\Q \to T_{\!\mathcal{Q}}$ is the composition of the above $\varphi$ and of $\gamma$ from \eqref{EQ define gamma}. In fact, $T_{\!\mathcal{Q}}$ contains the first group $\mathcal{Q}$ also.

\subsection{The problem on explicit embedding of $\Q$}
\label{SU The problem on explicit embedding of Q} 
As the group $\Q$ certainly is recursive, see point~\ref{SU Embedding Q into a 2-generator group},  
it is embeddable into a  finitely presented group by \cite{Higman Subgroups of fP groups}. 
The task of finding an \textit{explicit} embedding of that type for $\Q$ goes back to Graham Higman himself. 

Discussing the Higman Embedding
Theorem in \cite{Johnson on Higman's interest} Johnson 
displays explicit embeddings of some recursive groups into finitely presented ones. He expresses his gratitude to  Higman for raising that problem, and then concludes by mentioning: \textit{``Our main aim, of embedding in a finitely presented group the additive group of rational numbers continues to elude us''}.

Bridson and de la Harpe ask in Problem 14.10 of 14'th edition of Kourovka Notebook \cite{kourovka} in 1999: \textit{``(Well-known problem). It is known that any recursively presented group embeds in a
finitely presented group \cite{Higman Subgroups of fP groups}. Find an explicit and \textit{``natural''} finitely presented group $\Gamma$
and an embedding of the additive group of the rationals $\Q$ in $\Gamma$''}.
In the current edition of Kourovka Notebook this question is in Problem 14.10 (a).

De la Harpe on page 53 in \cite{De La Harpe 2000} stresses:
\textit{``We can also record the well-known problem of finding a natural and explicit embedding of $\Q$ in a finitely-presented group. Such a
group exists, by a theorem of Higman''}.

\smallskip
Problem 14.10 in \cite{kourovka} had one more point, which in the current edition  is labeled as Problem 14.10 (b): 
\textit{``Find an explicit embedding of $\Q$ in a finitely generated group; such a group exists by Theorem IV in
\cite{HigmanNeumannNeumann}''} (this is the weaker version of the previous question with condition of finite \textit{presentation} for $\Gamma$ dropped). To answer this question we in  \cite{On a problem on explicit embeddings of Q} were able to build $2$-generated groups explicitly containing  $\Q$  by two methods: using wreath products, and using free constructions of groups. 
However, our main aim of embedding of $\Q$ into a finitely \textit{presented} group continued to elude us also.

\subsection{Results of recent years}
\label{SU Results of recent years} 

Our attempts to find an embedding of a given \textit{general} recursive group $G$ (such as $\Q$) into a finitely presented one were based on modifications of the steps of Higman's construction in \cite{Higman Subgroups of fP groups}.
In particular, the first step of Higman's embedding is the construction of an embedding of $G$ into a $2$-generator group (let us denote it by $T_G$), the defining relations of which also are recursively enumerable. We suggested a universal method that automates this step, and also conveys certain properties (useful for embeddings into finitely presented groups) from  $G$ to $T_G$, see \cite{Embeddings using universal words} for details. 

In \cite{Explicit embeddings Moscow 2018} we were able to report a general method of explicit embeddings for some types of recursive groups, including $\Q$, into finitely presented groups. 
\cite{The Higman operations and  embeddings} contains an algorithm of how the sets of integer-valued sequences, used in Higman embeddings, can explicitly be written via an $H$-machine method (for $\Q$ see Example 3.5 and Remark 3.6 in \cite{The Higman operations and  embeddings}). 
And in \cite{A modified proof for Higman} we suggested a modification of the original Higman embedding  which not only is, we hope, simpler than \cite{Higman Subgroups of fP groups}, but which also makes explicit embeddings manageable. 
%
However, \cite{Explicit embeddings Moscow 2018, The Higman operations and  embeddings, A modified proof for Higman} do \textit{not} suggest any explicit finitely presented overgroup of $\Q$.

\smallskip
Interesting explicit examples of finitely presented groups holding $\Q$ were presented by 
Belk, Hyde and Matucci in \cite{Belk Hyde Matucci}.  
The first embedding of $\Q$ in \cite{Belk Hyde Matucci} is into the group $\overline T$ from \cite{Ghys Sergiescu}, namely, $\overline T$ is the group of all piecewise-linear homeomorphisms $f:\mathbb R \to \mathbb R$ satisfying certain specific requirements \cite[Theorem 1]{Belk Hyde Matucci}. 
The second finitely presented group of \cite{Belk Hyde Matucci} is related to the first one, and it is the automorphism group of Thompson's group $F$ \cite[Theorem 2]{Belk Hyde Matucci}. These embedding allow further variations, say, $\overline T$ (together with its subgroup $\Q$) admits an embedding into two specific finitely presented \textit{simple} groups $T\! \mathcal A$ and $T\mathcal V$, see \cite[Remark 4]{Belk Hyde Matucci}. Also, \cite[Remark 5]{Belk Hyde Matucci} refers to 
\cite{Hurley, Ould Houcine} where existence of a
finitely presented group with its center isomorphic to $\Q$ was proved; it would be interesting to find a natural example of such groups.
By \cite[Proposition 1.10]{Belk Hyde Matucci} 
the group $\overline T$ can be given by just $2$ generators and $4$ defining relations.
The results of \cite{Belk Hyde Matucci} were also used in  \cite{Belk Bleak Matucci Zaremsky}.

\smallskip
The recent progress motivated us to continue  
\cite{Embeddings using universal words, The Higman operations and  embeddings} to build explicit embeddings of $\Q$ into two finitely presented groups, denoted via $\mathcal{Q}$ and $T_{\!\mathcal{Q}}$ in Section~\ref{SE Explicit list of generators and relations for Q for TQ}. 
%

\subsection{Structure of this note}
\label{SU Structure of this}

Since we wish to avoid repetitions of  any parts from our other articles, here we just put very quick references to proofs, definitions, notations discussed elsewhere.
In particular, the Higman operations on integer-valued sequences, basic properties of benign subgroups, properties of free constructions and of the  auxiliary $\bigast$-construction (which we suggest for technical purposes) are not being defined here, and we just refer to \cite{Embeddings using universal words, A modified proof for Higman, Auxiliary free constructions for explicit embeddings} for details.  Section~\ref{SE Preliminary notation, constructions and references} below holds some quick definitions and exact references of that type.

In Section~\ref{SU Writing Higman code for the embedding} we use the method of \cite{Embeddings using universal words} to build an embedding $\alpha$ of $\Q$ into a $2$-generator group $T_\Q$ the defining relations of which can explicitly be written as \eqref{EQ TG genetic code}. 
Then using \cite{The Higman operations and  embeddings} we code the relations of \eqref{EQ TG genetic code} via integer-valued sequences $f_k$ in \eqref{EQ Higman code for Q}. The set of such sequences is denoted by $\mathcal T$.

In the free group $F=\langle a,b,c \rangle$ of rank $3$ we using this set $\mathcal T$ define a specific subgroup $A_{\mathcal T} = \langle a_{f_k} \mathrel{|} f_k\in \mathcal T \, \rangle$. 
The heavy part of the current work is to show that $A_{\mathcal T}$ is benign in $F$, and this job is accomplished in sections \ref{SE Construction of KB}\,--\ref{SE Construction of KF and KT}.

Section~\ref{SE The final embedding} uses the benign subgroup $A_{\mathcal T}$ and the respective groups $K_{\mathcal T}, L_{\mathcal T}$  to build the finitely presented overgroup $\mathcal{Q}$ of $T_\Q$, together with the embedding $\beta: T_\Q \to \mathcal{Q}$. Hence, the desired explicit embedding $\varphi: \Q \to \mathcal{Q}$ is the composition of $\alpha$ and $\beta$. 

In Section~\ref{SE Explicit list of generators and relations for Q for TQ} we  explicitly write down  $\mathcal{Q}$ by its generators and defining relations, see   point~\ref{SU Writing the explicit  list of generators and relations for Q}. 
Further, we again use the method of \cite{Embeddings using universal words} to embed $\mathcal{Q}$ (and hence its subgroup $\Q$ also) into a $2$-generator group $T_{\!\mathcal{Q}}$ with a still large number of relations, see Remark~\ref{RE It is possible to reduce the above number of relations}. The generators and defining relations of $T_{\!\mathcal{Q}}$ are outlined in point~\ref{SU Writing the explicit  list of generators and relations for T cal Q}, and the explicit embedding $\psi: \Q \to T_{\!\mathcal{Q}}$ is the composition of the above $\varphi$ with $\gamma$ from \eqref{EQ define gamma}.
In order to have a short name to label the algorithm of sections \ref{SU Writing Higman code for the embedding}--\ref{SE Explicit list of generators and relations for Q for TQ} we may call it an $H$-machine (running on $\Q$).

\subsection*{Acknowledgements}
\label{SU Acknowledgements}

The work is supported by the grant 21T-1A213 of SCS MES Armenia. 
I am thankful to the Fachliteratur Program of the German Academic Exchange Service DAAD for the academic literature provided over past years, grant A/97/13683.

\section{Preliminary notation, constructions and references}
\label{SE Preliminary notation, constructions and references}

\subsection{Integer-valued functions $f$}
\label{SU Integer functions f} 

Following \cite{Higman Subgroups of fP groups} denote by $\mathcal E$ the set of all functions $f : \Z \to \Z$ with finite supports. 
When for a certain  $m=1,2,\ldots$ we have 
$f(i)=0$ for \textit{all} $i<0$ and $i\ge m$, then it is comfortable to record $f$ as a sequence $f=(j_0,\ldots,j_{m-1})$ assuming $f(i)=j_i$ for $i=0,\ldots,m-1$, e.g., the function $f=(0,5,9,8)$ sending the integers $0, 1, 2, 3$ to $0,5,9,8$ (and all other integers to $0$).
Denote the set of all such functions written as sequences for a fixed $m$ by $\E_m$.\, 
Clearly, $m$ may not be uniquely defined for a given $f$\!, and where necessary we may append extra zeros at the end of a sequence, e.g., the previous function can well be recorded as $f=(0,5,9,8,0,0)\in \E_6$ (the last two zeros change nothing in the way $f$ acts on $\Z$). Also, the constant zero function can be written as $f=(0)$ or, say, as $f=(0,0,0)$ where necessary. 
See more on such functions in  point 2.2 in \cite{The Higman operations and  embeddings}.

For any $f\in \mathcal E$ and $k\in \Z$  define the function $f_{k}^+$\! as follows: 
$f_{k}^+(i)=f(i)$ for all $i\!\neq\! k$, and   
$f_{k}^+(k) = f(k)\!+\!1$.
When $f\in \E_m$ (with $m$ given by the context), we shorten $f_{m-1}^+=f^+$\!. Say, for the above $f\!=\!(0,5,9,8)\in\E_4$ we have $f_{1}^+\!\!=(0,6,9,8)$ and 
$f^+\!\!=(0,5,9,9)$, i.e., we just add $1$ to the last  coordinate of $f$ to get $f^+$\!.

\subsection{The Higman operations}
\label{SU The Higman operations} 
Higman defines special operations which  transform subsets of $\E$ to some new subsets of  $\E$, see Section 2 in In \cite{Higman Subgroups of fP groups}.
To make applications of Higman operations more effective we defined some extra auxiliary operations, see points 2.3 and 2.4 in  \cite{The Higman operations and  embeddings}.
In this note we use the Higman operations to compose such a specific subset $\mathcal T$ of $\E$ which in some sense ``records'' the defining relations of $\Q$. 
Although Higman operations will be used in this note often, we do not want to make the below proofs dependant on understanding of Higman operations. Hence we have an:

\begin{Agreement}
\label{AG Agreement about Higman operations}
Each time we use a subset of $\E$ built by Higman operations, we will also give an alternative description of it. 
Say, in Section~\ref{SE Construction of KB} we are going to use the subset 
${\mathcal B}=\upsilon(\zeta_{\!1} \Zz , \tau\S)$ of $\E_2$, but the reader does \textit{not} have to learn what the Higman operations $\upsilon, \zeta_{\!1}, \tau$, and the symbols $\Zz, \S$ mean, as it will be at once explained that ${\mathcal B}$ is just the set of all functions of type either $(0,n)$ or $(n\!+\!1,\, n)$, $n\in \Z$.
\end{Agreement}

\subsection{Benign subgroups}
\label{SU Benign subgroups and Higman operations} 

Higman calls a subgroup $H$ of a finitely generated group $G$ a \textit{benign} subgroup, if $G$ can be embedded into a finitely presented group $K$ with a finitely generated subgroup $L$ such that $G \cap L = H$. If we wish to stress the correlation of $K$ and $L$ with $G$, we may denote them $K_G$ and $L_G$.
For detailed information on benign subgroups we refer to  Sections 3, 4 in \cite{Higman Subgroups of fP groups}, see also Section 3 in \cite{A modified proof for Higman}.

\begin{Remark}
\label{RE finite generated is benign}
From definition of benign subgroups it is very easy to see that arbitrary finitely generated subgroup $H$ in any finitely presented group $G$ is benign in $G$. We are going to often use this remark in the sequel.
\end{Remark}

\subsection{Free constructions}
\label{SU Free constructions} 

For background information on free products with amalgamation and on HNN-extensions we refer to 
\cite{Bogopolski} and \cite{Lyndon Schupp}. 
Our usage of the \textit{normal forms} in
free constructions is closer to~\cite{Bogopolski}.
Notations vary  in the literature, and to maintain uniformity we are going to adopt notations of \cite{A modified proof for Higman}. 

If any groups $G$ and $H$ have subgroups, respectively, $A$ and $B$ isomorphic under  $\varphi : A \to B$, then the (generalized) free product of $G$ and $H$ with amalgamated subgroups $A$ and $B$ is denoted by
$G*_{\varphi} H$ (we are not going to use the alternative notation $G*_{A=B} H$). When $G$ and $H$ are overgroups of the same subgroup $A$, and $\varphi$ is just the identical  map on $A$, we write the above as $G*_{A} H$.

If $G$ has subgroups $A$ and $B$ isomorphic under  $\varphi : A \to B$, then the HNN-extension  of the base $G$ 
by some stable letter $t$
with respect to the isomorphism 
$\varphi$ is denoted by
$G*_{\varphi} t$.
In case when $A=B$ and $\varphi$ is  identical  map on $A$, denote the  above by $G*_{A} t$.
We also use HNN-extensions $G *_{\varphi_1, \varphi_2, \ldots} (t_1, t_2, \ldots)$  
with more than one stable letters, see \cite{A modified proof for Higman} for details.

\medskip
Below we are going to use a series of facts about certain specific subgroups in  free constructions $G*_{\varphi} H$, \, $G*_{A} H$, \,$G*_{\varphi} t$ and $G*_{A} t$. We have stockpiled them in Section 3 of  \cite{Auxiliary free constructions for explicit embeddings} to refer to that section whenever needed.

\subsection{The $\bigast$-construction}
\label{SU The *-construction}

Although the constructions in \cite{Higman Subgroups of fP groups} look very diverse, they often seem to be \textit{particular cases} of a single general construction. Thus, for our work it is reasonable to define that general construction and to collect its basic properties in  \cite{Auxiliary free constructions for explicit embeddings} in order to refer to them where needed. 

\smallskip
Let $G\le M  \le K_1,\ldots,K_r$ be an arbitrary system of groups  such that $K_i \cap  K_j=M$ for any distinct indices $i,j=1,\ldots,r$.
Choosing in each $K_i$ a subgroup $L_i$ we can build an auxiliary ``nested'' free construction:
\begin{equation}
\label{EQ initial form of star construction}
\Big(\cdots
\Big( \big( (K_1 *_{L_1} t_1) *_M (K_2 *_{L_2} t_2) \big) *_M  (K_3 *_{L_3} t_3)\Big)\cdots 
\Big) *_M  (K_r *_{L_r} t_r)\,.
\end{equation}
To avoid this very bulky notation, let us for the sake of briefness denote the above  by:
\begin{equation}
\label{EQ nested Theta  multi-dimensional short form} 
\textstyle
\bigast_{i=1}^{r}(K_i, L_i, t_i)_M.
\end{equation}
Denote $G \cap \, L_i = A_i$, $i=1,\ldots,r$. If for each $i$ we are limited to $K_i=G$, $L_i=A_i$, $M=G$, then 
$\bigast_{i=1}^{r}(G, A_i, t_i)_G$ is noting but the usual HNN-extension $G *_{A_1,\ldots,\,A_r } \!(t_1,\ldots,t_r)$.

\medskip
All the facts in the rest of the current point are proved in  \cite{Auxiliary free constructions for explicit embeddings}.

\begin{Lemma}
\label{LE intersection in bigger group multi-dimensional}
If $G\le M  \le K_1,\ldots,K_r$ are groups mentioned above, then in ${\bigast}_{i=1}^{r}(K_i, L_i, t_i)_M$
the following equality holds:
$$
\langle G, t_1,\ldots,t_r \rangle= 
G *_{A_1,\ldots,\,A_r } \!(t_1,\ldots,t_r).
$$
\end{Lemma}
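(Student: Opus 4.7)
The plan is to argue by induction on $r$, relying on the subgroup-in-free-construction facts that have been stockpiled in \cite{Auxiliary free constructions for explicit embeddings}. For the base case $r=1$, the statement reduces to the classical assertion that, inside the HNN-extension $K_1 *_{L_1} t_1$, the subgroup $\langle G, t_1\rangle$ coincides with the HNN-extension $G *_{A_1} t_1$ over $A_1 = G \cap L_1$. This is a standard consequence of Britton's lemma: because the associated automorphism of $L_1$ is the identity, it restricts to the identity on $A_1$, so the natural map $G *_{A_1} t_1 \to K_1 *_{L_1} t_1$ is defined; and Britton's lemma shows it is injective with image exactly $\langle G, t_1\rangle$. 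This fact should already be available from the stockpile.

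For the inductive step assume the statement for $r-1$ and write
$$\bigast_{i=1}^{r}(K_i, L_i, t_i)_M \;=\; N *_M (K_r *_{L_r} t_r), \qquad N := \bigast_{i=1}^{r-1}(K_i, L_i, t_i)_M.$$
By the inductive hypothesis, $U := \langle G, t_1,\ldots,t_{r-1}\rangle \le N$ is isomorphic to $G *_{A_1,\ldots,A_{r-1}}(t_1,\ldots,t_{r-1})$, and by the base case $V := \langle G, t_r\rangle \le K_r *_{L_r} t_r$ is isomorphic to $G *_{A_r} t_r$. The target subgroup $\langle G, t_1,\ldots,t_r\rangle$ is precisely $\langle U, V\rangle$ inside the amalgam $N *_M (K_r *_{L_r} t_r)$, and the conclusion follows from a second stockpiled fact on amalgamated products: if subgroups $U\le N$ and $V \le K_r *_{L_r} t_r$ both meet the amalgamated subgroup $M$ in a common subgroup $G$, then $\langle U, V\rangle$ in the amalgam is isomorphic to $U *_G V$. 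Substituting the two HNN-extensions for $U$ and $V$ immediately yields the required group $G *_{A_1,\ldots,A_r}(t_1,\ldots,t_r)$.

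The main technical obstacle, and the step I expect to cost the most care, is verifying the hypotheses $U \cap M = G$ and $V \cap M = G$ inside their ambient groups at each stage of the induction. Both equalities follow by a Britton-type normal-form argument applied to the innermost HNN steps: any element of $U$ whose reduced form contains a stable-letter syllable $t_i$ cannot lie in $K_i$, hence cannot lie in $M \le K_i$; so an element of $U$ that does lie in $M$ must be syllable-free, i.e.\ already in $G$. The analogous argument works for $V$. Because the nested construction alternates HNN steps (introducing one new stable letter $t_i$ at a time and entirely inside a single $K_i$) with amalgams over the fixed subgroup $M$, stable letters belonging to distinct layers never interact in a reduction; this is precisely the combinatorial reason the resulting subgroup has the free HNN structure $G *_{A_1,\ldots,A_r}(t_1,\ldots,t_r)$, rather than any further amalgamation among the $t_i$'s.
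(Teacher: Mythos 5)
The paper itself does not prove Lemma~\ref{LE intersection in bigger group multi-dimensional}: together with the other facts of point~\ref{SU The *-construction}, the proof is deferred to the companion preprint \cite{Auxiliary free constructions for explicit embeddings}, so a direct comparison with ``the paper's own proof'' is not available. Evaluated on its own terms, your inductive strategy is the natural one and is essentially sound: the base case $r=1$ is indeed a direct consequence of Britton's lemma (an $L_1$-pinch of an element of $G$ lands in $G \cap L_1 = A_1$, so a $G*_{A_1}t_1$-reduced word stays reduced in $K_1 *_{L_1} t_1$), and the amalgam-of-subgroups fact you invoke is precisely the tool the paper itself references elsewhere as \cite[Corollary~3.5]{Auxiliary free constructions for explicit embeddings}.

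The one place the write-up falls short of rigor is the verification of $U \cap M = G$ at stage $r-1$. Your argument says that a reduced form of $u\in U$ containing a syllable $t_i$ ``cannot lie in $K_i$'', but that sentence conflates the reduced form of $u$ with respect to the HNN structure of the abstract group $U\cong G*_{A_1,\ldots,A_{r-1}}(t_1,\ldots,t_{r-1})$ with a reduced/normal form of $u$ inside $N=\bigast_{i=1}^{r-1}(K_i,L_i,t_i)_M$. These are a priori different things, and passing from one to the other is exactly the content of the inductive hypothesis you are trying to use. The clean fix is to strengthen the statement being proved by induction to a conjunction: for each $r$, (a) $\langle G,t_1,\ldots,t_r\rangle \cong G*_{A_1,\ldots,A_r}(t_1,\ldots,t_r)$ and (b) $\langle G,t_1,\ldots,t_r\rangle \cap M = G$. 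Then (b) at level $r-1$, together with $V\cap M=G$ from the base case, licenses the amalgam fact and gives (a) at level $r$; and (b) at level $r$ follows because every nontrivial alternating word in $U_{r-1}*_G V$ has length at least one in $N*_M (K_r*_{L_r}t_r)$ once you know its syllables avoid $M$, which is exactly (b) at level $r-1$ and $V\cap M = G$. With that bookkeeping made explicit, the proof goes through.
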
 

For any group $G$ and its subgroup $A$ 
the well known equality $G \cap G^t \!= A$ in the HNN-extension $G*_A t$  can be generalized to the following:

\begin{Lemma}
\label{LE intersection in HNN extension multi-dimensional}
Let $A_1,\ldots,\,A_r$ be any subgroups in a group $G$ with the intersection 
$I=\bigcap_{\,i=1}^{\,r} \,A_i$. 
Then in $G *_{A_1,\ldots,\,A_r} (t_1,\ldots,t_r)$ we have:
\begin{equation}
\label{EQ gemeral intersection in HNN}
\textstyle
G \cap G^{t_1 \cdots\, t_r}
= I.
\end{equation}
\end{Lemma}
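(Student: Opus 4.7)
The plan is to prove the two inclusions in \eqref{EQ gemeral intersection in HNN} separately. The containment $I \subseteq G \cap G^{t_1 \cdots\, t_r}$ is an immediate telescoping commutation, whereas the reverse inclusion requires iterated applications of the normal form theorem (Britton's lemma) for the multi-stable-letter HNN extension, peeling off the stable letters from the innermost pair outward.

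For the forward inclusion, pick $x \in I = \bigcap_{i=1}^{r} A_i$. Since each $t_i$ centralizes $A_i$ by the very definition of $G *_{A_1,\ldots,\,A_r}(t_1,\ldots,t_r)$, and $x$ lies in every $A_i$, each $t_i$ commutes with $x$. A telescoping commutation yields $(t_1 \cdots t_r)^{-1} x \,(t_1 \cdots t_r) = x$, so $x \in G \cap G^{t_1 \cdots\, t_r}$.

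For the reverse inclusion, suppose $x \in G$ lies in $G^{t_1 \cdots\, t_r}$, so there is $y \in G$ with $(t_1 \cdots t_r)\, x \,(t_r^{-1} \cdots t_1^{-1}) = y$. Equivalently, the word
$$
w \;=\; t_1 t_2 \cdots t_r \cdot x \cdot t_r^{-1} \cdots t_2^{-1} t_1^{-1} \cdot y^{-1}
$$
equals $1$ in $G *_{A_1,\ldots,\,A_r}(t_1,\ldots,t_r)$. Reading $w$ from left to right, the consecutive stable letters $t_i, t_{i+1}$ on the left half share the same sign, as do $t_{i+1}^{-1}, t_i^{-1}$ on the right half, so neither pair can contribute a pinch; the only adjacent opposite-sign pair of stable letters with a group element between them is the innermost $t_r \cdot x \cdot t_r^{-1}$. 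Britton's lemma therefore forces this to be a pinch, which demands $x \in A_r$, and then $t_r x t_r^{-1} = x$ collapses $w$ to $t_1 \cdots t_{r-1} \cdot x \cdot t_{r-1}^{-1} \cdots t_1^{-1} \cdot y^{-1} = 1$. The same analysis applies with $r$ replaced by $r-1$, and iterating downward gives $x \in A_i$ for every $i = 1, \ldots, r$. Hence $x \in I$.

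The main technical point is the appeal to Britton's lemma for the HNN extension with several stable letters $t_1,\ldots,t_r$ and corresponding associated subgroups $A_1,\ldots,A_r$. This is a routine generalization of the classical one-stable-letter statement and is already recorded for our purposes in the companion paper \cite{A modified proof for Higman} cited in point~\ref{SU Free constructions}.
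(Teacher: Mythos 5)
Your proof is correct. The forward inclusion $I \subseteq G\cap G^{t_1\cdots t_r}$ is immediate from the defining relations $t_i^{-1}a t_i=a$ for $a\in A_i$, and for the reverse inclusion the appeal to Britton's lemma for the multi-stable-letter HNN extension applied to the trivial word $t_1\cdots t_r\,x\,t_r^{-1}\cdots t_1^{-1}\,y^{-1}$ is exactly the right move: the only candidate pinch is the innermost $t_r\,x\,t_r^{-1}$, which forces $x\in A_r$; after deleting it the same reasoning applies to $t_{r-1}\,x\,t_{r-1}^{-1}$ and so on inward, yielding $x\in A_i$ for every $i$. The paper itself does not reproduce a proof of Lemma~\ref{LE intersection in HNN extension multi-dimensional} but defers it to the companion article \cite{Auxiliary free constructions for explicit embeddings}, so there is no in-paper argument to compare line by line; the Britton's-lemma argument you give is, however, the standard and essentially inevitable one for a statement of this shape. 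One small stylistic remark: you justify that the pairs $t_i\cdot 1\cdot t_{i+1}$ on the left half cannot be pinches because they share a sign. That is true but the more fundamental obstruction is that they are \emph{distinct} stable letters; a pinch $t_i^{\epsilon}g\,t_j^{-\epsilon}$ requires $i=j$ as well as opposite exponents. Both reasons hold here, so your conclusion is unaffected.
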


\begin{Lemma}
\label{LE join in HNN extension multi-dimensional}
Let $A_1,\ldots,\,A_r$ be any subgroups in a group $G$ with the join
$J=\big\langle\bigcup_{\,i=1}^{\,r} \,A_i\big\rangle$. 
Then in $G *_{A_1,\ldots,\,A_r} (t_1,\ldots,t_r)$ we have:
\begin{equation}
\label{EQ gemeral intersection in HNN multi-dimensional}
\textstyle 
G \cap \big\langle 
  \bigcup_{\,i=1}^{\,r} \,G^{t_i} \big\rangle
=J.
\end{equation}
\end{Lemma}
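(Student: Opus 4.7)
The inclusion $J \subseteq G \cap \big\langle \bigcup_{i=1}^r G^{t_i}\big\rangle$ is immediate: since the associated isomorphisms of the HNN-extension $P := G *_{A_1,\ldots,A_r}(t_1,\ldots,t_r)$ are all identity maps, each $a\in A_i$ satisfies $a = t_i^{-1}\,a\,t_i \in G^{t_i}$ and of course $a\in G$, so $A_i \subseteq G \cap G^{t_i}$ for every $i$. Passing to the subgroup generated by all these $A_i$ yields $J \subseteq G \cap \big\langle \bigcup_i G^{t_i}\big\rangle$.

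For the reverse inclusion my plan is to pass to a quotient in which the analogous intersection is visibly trivial. Let $\bar G := G/J$ and let $F$ be the free group on fresh symbols $\bar t_1,\ldots,\bar t_r$. Define $\phi\colon P \to \bar G * F$ by sending $G$ onto $\bar G$ via the canonical projection and $t_i \mapsto \bar t_i$. The only relations that need checking are the HNN-relations $t_i^{-1} a\, t_i = a$ for $a \in A_i$; since $A_i \subseteq J$ both sides map to $1$, so $\phi$ is a well-defined homomorphism, and by construction $\ker(\phi|_G) = J$. Hence if $h \in G \cap \big\langle \bigcup_i G^{t_i}\big\rangle$ then $\phi(h) \in \bar G \cap \big\langle \bigcup_i \bar G^{\bar t_i}\big\rangle$, so it suffices to prove inside the free product $\bar G * F$ that
\[
\bar G \cap \big\langle \textstyle\bigcup_{i=1}^r \bar G^{\bar t_i} \big\rangle = \{1\}.
\]

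To establish this last equality I would take a typical element $w = \prod_{j=1}^{s} \bar t_{i_j}^{-1}\,\bar g_j\,\bar t_{i_j}$ of the left-hand side and iterate two elementary reductions until neither applies: drop any factor with $\bar g_j = 1$, and merge two adjacent factors whenever $i_j = i_{j+1}$ into the single conjugate $\bar t_{i_j}^{-1}(\bar g_j \bar g_{j+1})\bar t_{i_j}$. If the process terminates with $s = 0$ then $w = 1$. Otherwise, after reduction, all $\bar g_j \neq 1$ and all consecutive $i_j \neq i_{j+1}$; consequently the middle $F$-syllables $\bar t_{i_j}\bar t_{i_{j+1}}^{-1}$ are reduced words of length $2$ in $F$, and the expression $\bar t_{i_1}^{-1}\bar g_1(\bar t_{i_1}\bar t_{i_2}^{-1})\bar g_2\cdots\bar g_s\bar t_{i_s}$ is already in normal form in $\bar G * F$, beginning and ending with $F$-syllables. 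Uniqueness of normal forms then forbids it from lying in $\bar G$, contradicting $w \in \bar G$. Combining, $\phi(h)=1$, and therefore $h \in \ker(\phi|_G) = J$.

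The main conceptual step is choosing the quotient $\phi$: the crucial observation is that modulo $J$ the HNN-relations become vacuous, so $P$ collapses to a genuine free product where normal-form machinery is available. The only technical obstacle is in the final reduction, where one must verify that the middle $F$-syllables are indeed nontrivial; this is exactly what the merge move ensures, so nothing beyond bookkeeping is required.
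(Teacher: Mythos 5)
Your forward inclusion and the free‐product normal‐form argument at the end are fine, but the reverse direction has a genuine gap at the very first step: forming the quotient group $\bar G = G/J$ requires $J$ to be \emph{normal} in $G$, and the lemma places no such restriction. The hypotheses say only that $A_1,\ldots,A_r$ are arbitrary subgroups, and the join of arbitrary subgroups is generally not normal. For a concrete failure, take $G=\langle a,b\rangle$ free of rank $2$, $r=1$ and $A_1=\langle a\rangle$: then $J=\langle a\rangle$ is not normal, $G/J$ is not a group, and $\phi$ cannot be defined — yet the lemma is still true (it reduces to the classical $G\cap G^{t}=A$ in a single HNN-extension). Both your claim ``$\phi$ is a well-defined homomorphism'' and ``$\ker(\phi|_G)=J$'' silently use normality. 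Trying to repair this by quotienting by the normal closure $N=\langle J^G\rangle$ does not help either: then $\ker(\phi|_G)=N\supsetneq J$, and the argument only yields $h\in N$, which is strictly weaker than $h\in J$.

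The merge/drop reductions you describe at the end are the right combinatorial moves, but they have to be carried out inside the HNN-extension $P=G*_{A_1,\ldots,A_r}(t_1,\ldots,t_r)$ itself, using Britton's lemma rather than a quotient. Writing a candidate $w\in G\cap\big\langle\bigcup_i G^{t_i}\big\rangle$ as $w=\prod_{j}t_{i_j}^{-1}g_j\,t_{i_j}$, Britton's lemma applied to $w\cdot w^{-1}$ (viewing the left copy as a $t$-word and the right copy as an element of $G$) forces a pinch, and the only available pinches are either $t_{i_j}^{-1}g_j\,t_{i_j}$ with $g_j\in A_{i_j}$ — which collapses that factor to an element of $A_{i_j}\subseteq J$ — or $t_{i_j}\cdot 1\cdot t_{i_{j+1}}^{-1}$ with $i_j=i_{j+1}$, the merge move. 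Tracking those collapsed elements of $J$ through the induction (they get absorbed into neighboring syllables) is exactly the bookkeeping your quotient was meant to avoid, and it is where the real work of the proof lives.
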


Now the $\bigast$-construction of \eqref{EQ nested Theta  multi-dimensional short form} together with 
Lemma~\ref{LE intersection in bigger group multi-dimensional},
Lemma~\ref{LE intersection in HNN extension multi-dimensional} and 
Lemma~\ref{LE join in HNN extension multi-dimensional}
provide a corollary which will be extensively used below.

\begin{Corollary}
\label{CO intersection and join are benign multi-dimensional}
If the subgroups $A_1,\ldots,\,A_r$ are benign in a finitely generated group $G$, then:
\begin{enumerate}
\item 
\label{PO 1 CO intersection and join are benign multi-dimensional}
their intersection $I=\bigcap_{\,i=1}^{\,r} \,A_i$ also is benign in $G$;
\item 
\label{PO 2 CO intersection and join are benign multi-dimensional}
their join $J=\big\langle\bigcup_{\,i=1}^{\,r} \,A_i\big\rangle$ also is benign in $G$.
\end{enumerate}
Moreover, if the finitely presented groups $K_i$ with their finitely generated subgroups $L_i$ can be 
given for each $A_i$ explicitly, then the respective finitely presented overgroups $K_I$ and $K_J$ with finitely generated  subgroups $L_I$ and $L_J$
can also be given for $I$ and for $J$ explicitly.
\end{Corollary}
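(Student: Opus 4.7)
Since each $A_i$ is benign in $G$, choose for every $i=1,\ldots,r$ a finitely presented overgroup $K_i$ of $G$ together with a finitely generated subgroup $L_i \le K_i$ satisfying $G \cap L_i = A_i$. The plan is to assemble all these witnesses into a single $\bigast$-construction and then to read off the desired subgroups $I$ and $J$ inside it, using Lemma~\ref{LE intersection in bigger group multi-dimensional} to identify a manageable HNN-extension inside that construction and Lemmas \ref{LE intersection in HNN extension multi-dimensional} and~\ref{LE join in HNN extension multi-dimensional} to describe the intersection and the join.

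Concretely, take $M = G$ and form the group
\[
K \;=\; \bigast_{i=1}^{r}(K_i, L_i, t_i)_G
\]
according to the nested expression \eqref{EQ initial form of star construction}. The requirement $K_i \cap K_j = G$ for $i \ne j$, needed to apply the $\bigast$-construction, is arranged in the standard way: the $K_i$ first embed into the iterated amalgamated product $K_1 *_G \cdots *_G K_r$, inside which pairwise intersections collapse to $G$ by the classical embedding theorem for amalgamated free products. The resulting $K$ is finitely presented, being built from finitely many HNN-extensions of finitely presented bases $K_i$ along the finitely generated associated subgroups $L_i$, and then amalgamated over the finitely generated group $G$.

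For part~\ref{PO 1 CO intersection and join are benign multi-dimensional}, set $L_I = G^{t_1 t_2 \cdots t_r}$, which is finitely generated because $G$ is. By Lemma~\ref{LE intersection in bigger group multi-dimensional} the subgroup $\langle G, t_1, \ldots, t_r \rangle$ inside $K$ is isomorphic to $G *_{A_1,\ldots,A_r}(t_1,\ldots,t_r)$, so Lemma~\ref{LE intersection in HNN extension multi-dimensional} gives $G \cap L_I = I$ there. Since the set-theoretic intersection of two subgroups does not change when the ambient group is enlarged, this equality persists in $K$, exhibiting $(K, L_I)$ as the required pair and proving $I$ is benign in $G$. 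Part~\ref{PO 2 CO intersection and join are benign multi-dimensional} is entirely analogous: taking $L_J = \langle G^{t_1}, \ldots, G^{t_r} \rangle$ (finitely generated as a join of finitely many finitely generated subgroups), Lemma~\ref{LE join in HNN extension multi-dimensional} yields $G \cap L_J = J$ in $\langle G, t_1, \ldots, t_r \rangle$, hence in $K$ by the same ambient argument. The ``moreover'' clause costs nothing extra: if each $(K_i, L_i)$ is specified explicitly by finite data, then $K$ is specified explicitly by writing out the HNN-stable letters $t_i$ together with the amalgamation identifications over $G$, and $L_I$ and $L_J$ are given by their explicit finite generating sets.

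The main technical obstacle I anticipate is not logical but bookkeeping: one must be sure that the intersection condition $K_i \cap K_j = M$ holds on the nose in whatever ambient the $\bigast$-construction is taken in, and that finite presentability propagates cleanly through the iterated construction of \eqref{EQ initial form of star construction}. Both points are handled by the standard normal form theorems for amalgamated products and HNN-extensions and have been compiled in \cite{Auxiliary free constructions for explicit embeddings}, so they enter the argument only as black boxes and do not require additional work here.
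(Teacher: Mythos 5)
Your proposal is correct and follows precisely the route the paper indicates: it combines the $\bigast$-construction with Lemma~\ref{LE intersection in bigger group multi-dimensional} to identify the HNN-extension $G *_{A_1,\ldots,A_r}(t_1,\ldots,t_r)$ inside the finitely presented $\bigast$-group, and then applies Lemma~\ref{LE intersection in HNN extension multi-dimensional} and Lemma~\ref{LE join in HNN extension multi-dimensional} to read off $I$ and $J$ as intersections with the explicit finitely generated subgroups $G^{t_1\cdots t_r}$ and $\langle G^{t_1},\ldots,G^{t_r}\rangle$, respectively; these are exactly the choices the paper later uses in its applications (e.g.\ $L_{\mathcal C}=F^{x_1 x_2}$ and $L_{\mathcal B}=\langle F^{v_1}, F^{v_2}\rangle$). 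The paper itself defers the proof to the auxiliary reference, but your argument supplies the intended one.
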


The following corollary lets us  discover free products inside $G  *_{A_1,\ldots,\,A_r } \!(t_1,\ldots,t_r)$, and hence inside
$\bigast_{i=1}^{r}(K_i, L_i, t_i)_M$, as soon as some \textit{smaller} free products are found inside $G$:

\begin{Corollary}
\label{CO smaller free product to the larger free product}
Let $A_1,\ldots,\,A_r$ be any subgroups in a group $G$ such that their join $J$ in $G$ is isomorphic to their free product
$\prod_{i=1}^{r} \,A_i$.  
Then the join
$\big\langle 
\bigcup_{\,i=1}^{\,r} G^{t_i} \big\rangle$
is isomorphic to the free product $\prod_{i=1}^{r} G^{t_i}$
in $G *_{A_1,\ldots,\,A_r } \!(t_1,\ldots,t_r)$, and hence in $\bigast_{i=1}^{r}(K_i, L_i, t_i)_M$.
\end{Corollary}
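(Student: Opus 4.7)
The plan is to prove the statement first inside the HNN-extension $E = G *_{A_1,\ldots,A_r}(t_1,\ldots,t_r)$; the conclusion then transfers to $\bigast_{i=1}^{r}(K_i, L_i, t_i)_M$ via Lemma~\ref{LE intersection in bigger group multi-dimensional}, which identifies $\langle G, t_1,\ldots,t_r\rangle$ inside the $\bigast$-construction with $E$. The natural map from the abstract free product $\prod_{i=1}^{r} G^{t_i}$ onto the join $\langle G^{t_1},\ldots,G^{t_r}\rangle$ is surjective by construction, so only injectivity needs verification, and for this I would use Britton's lemma in $E$.

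I would pick an arbitrary reduced alternating word
\[
W = w_1 w_2 \cdots w_n, \qquad w_k \in G^{t_{i_k}}\setminus\{1\},\ \ i_k\neq i_{k+1},
\]
write each $w_k = t_{i_k}^{-1} g_k t_{i_k}$ with $g_k \in G\setminus\{1\}$, so that in $E$
\[
W = t_{i_1}^{-1} g_1 t_{i_1}\cdots t_{i_n}^{-1} g_n t_{i_n},
\]
and then argue $W\neq 1$. Initially the only possible pinches are the subwords $t_{i_k}^{-1} g_k t_{i_k}$ with $g_k \in A_{i_k}$, since adjacent stable letters meet as $t_{i_k} t_{i_{k+1}}^{-1}$ with distinct indices and so cannot pinch. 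Replacing each such subword by $g_k$ yields a word $v_1\cdots v_n$ in which either $v_k = g_k\in A_{i_k}$ or $v_k = t_{i_k}^{-1} g_k t_{i_k}$.

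The crucial step is to verify that $v_1\cdots v_n$ is already Britton-reduced. A potential new pinch could only arise across a maximal run of collapsed factors $v_k,\ldots,v_l$, yielding a subword $t_{i_{k-1}}(g_k\cdots g_l)t_{i_{l+1}}^{-1}$; this is a pinch only if $i_{k-1}=i_{l+1}$ and the product $g_k\cdots g_l$ lies in $A_{i_{k-1}}$. But by hypothesis $\langle A_1,\ldots,A_r\rangle \cong A_1 * \cdots * A_r$, so $g_k\cdots g_l$ is a non-trivial reduced alternating word in this free product (using $g_j\in A_{i_j}\setminus\{1\}$ and $i_j\neq i_{j+1}$), which either has length $\ge 2$ and therefore lies in no single factor, or is a single $g_k\in A_{i_k}$ with $i_k\neq i_{k-1}$ and $A_{i_k}\cap A_{i_{k-1}}=\{1\}$, forcing $g_k=1$ and contradicting $w_k\neq 1$. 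Hence no cascade is possible, and $v_1\cdots v_n$ is Britton-reduced: if it retains any $t$-letters then Britton's lemma gives $W\neq 1$ directly; otherwise $W = g_1\cdots g_n$ is an alternating non-trivial product in $A_1 * \cdots * A_r\le G$ and so again $W\neq 1$. The main obstacle I expect is precisely this no-cascade verification, and the free-product hypothesis on $\{A_i\}$ is used here exactly to prevent it.
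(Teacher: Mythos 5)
Your proof is correct and takes the expected approach: establish the claim inside the HNN-extension $E = G*_{A_1,\ldots,A_r}(t_1,\ldots,t_r)$ via Britton's lemma, then transfer to the $\bigast$-construction using Lemma~\ref{LE intersection in bigger group multi-dimensional}. The one point that really needs care — your no-cascade verification — is handled soundly: after collapsing the first-level pinches $t_{i_k}^{-1}g_k t_{i_k}$ with $g_k\in A_{i_k}$, a maximal run of collapsed factors contributes $g_k\cdots g_l$, which is a reduced alternating word in $J\cong A_1*\cdots*A_r$ and therefore (by length $\ge 2$, or for a single factor by $A_{i_k}\cap A_{i_{k-1}}=\{1\}$) cannot lie in $A_{i_{k-1}}$, so no fresh pinch appears and Britton's lemma gives $W\ne 1$; the all-collapsed case falls back to the normal form in the free product $J\le G$. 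The paper itself defers this proof to its companion note on auxiliary free constructions, but the normal-form argument you give is the natural and standard route for such statements.
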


\subsection{Auxiliary benign free subgroups of countable rank}
\label{SU Auxiliary benign free subgroups}
In the free group $\langle b,c \rangle$ of rank $2$ denote $b_i = b^{c^i}=c^{-i} b c^i$ for any $i\in \Z$.
In \cite{Auxiliary free constructions for explicit embeddings} for a fixed integer
$m$ we have used the isomorphisms
$\xi_m(b)=b_{-m+1},\;
\xi'_m(b)=b_{-m}$,\;
$\xi_m(c)=\xi'_m(c)=c^2$
in the free group $\langle b,c \rangle$ of rank $2$,  and have defined the HNN-extension 
$
\Xi_m = \langle b,c \rangle *_{\xi_m, \xi'_m} (t_m, t'_m)
$
with stable letters $t_m, t'_m$. In \cite{Auxiliary free constructions for explicit embeddings} we have proved the following technical facts:
\begin{Lemma}
\label{LE Ksi}
In the above notation the following equalities hold for any $m$ in $\Xi_m$:
\begin{equation}
\label{EQ Ksi two equality}
\begin{split}
\langle b,c \rangle \cap \langle b_m, t_m, t'_m\rangle &= \langle b_m, b_{m+1},\ldots\rangle,
\\
\langle b,c \rangle \cap \langle b_{m-1}, t_m, t'_m\rangle &= \langle b_{m-1}, b_{m-2},\ldots\rangle.
\end{split}
\end{equation}
\end{Lemma}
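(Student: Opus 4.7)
The plan is to identify the subgroup $H=\langle b_m, t_m, t'_m\rangle$ with an internal HNN-extension whose base is the countably generated subgroup $H_0=\langle b_m, b_{m+1},\ldots\rangle$ of $F=\langle b,c\rangle$, and then read off $H\cap F$ from Britton's lemma. The second identity in \eqref{EQ Ksi two equality} will follow by the symmetric argument starting from $b_{m-1}$.

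First I would record the action of the stable letters on the free basis $\{b_i\}_{i\in\Z}$ of the normal closure of $b$ in $F$. Since $\xi_m(c)=\xi'_m(c)=c^2$, a one-line conjugation yields $t_m^{-1}b_i t_m=b_{2i-m+1}$ and $t'_m{}^{-1}b_i t'_m=b_{2i-m}$. The easy inclusion $\langle b_m,b_{m+1},\ldots\rangle\subseteq H\cap F$ is then a strong induction on $i\ge m$: for the target $b_{i+1}$ exactly one of the two numbers $(i+m)/2$ and $(i+m+1)/2$ is an integer $j\in[m,i]$, and the corresponding conjugation carries $b_j$ (already in $H$ by induction) to $b_{i+1}$; the base case uses $t_m^{-1}b_m t_m=b_{m+1}$.

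For the reverse inclusion I would exhibit an explicit isomorphism between $H$ and the abstract HNN-extension $\tilde H=H_0*_{\xi_m|_{H_0},\,\xi'_m|_{H_0}}(s,s')$, via the identity on $H_0$ together with $s\mapsto t_m$ and $s'\mapsto t'_m$. Two facts are needed: that $\xi_m$ and $\xi'_m$ map $H_0$ into itself (immediate from $i\ge m\Rightarrow 2i-m\ge m$), and the matching identities $H_0\cap\langle b_{-m+1},c^2\rangle=\xi_m(H_0)=\langle b_{m+1},b_{m+3},\ldots\rangle$ and $H_0\cap\langle b_{-m},c^2\rangle=\xi'_m(H_0)=\langle b_m,b_{m+2},\ldots\rangle$. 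These rest on the fact that $\{b_i:i\in\Z\}$ freely generates the normal closure of $b$ in $F$, so intersecting the ``coordinate'' subgroup $H_0$ with the $c^2$-closed subgroups $\langle b_{-m+1},c^2\rangle$ and $\langle b_{-m},c^2\rangle$ reduces to a parity-plus-range check on indices. These matchings make the Britton pinch conditions in $\tilde H$ and in $\Xi_m$ coincide on words whose intermediate letters lie in $H_0$, so the natural map sends reduced words to reduced words, is injective, and has image $\langle H_0,t_m,t'_m\rangle=H$. The normal form theorem for HNN-extensions then yields $H\cap F=H_0$, the first equality of \eqref{EQ Ksi two equality}.

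The second equality is handled by the same template with $H'_0=\langle b_{m-1},b_{m-2},\ldots\rangle$: here $\xi_m$ fixes $b_{m-1}$ while $\xi'_m(b_{m-1})=b_{m-2}$, both isomorphisms preserve $H'_0$, and the analogous matchings $H'_0\cap\langle b_{-m+1},c^2\rangle=\xi_m(H'_0)=\langle b_{m-1},b_{m-3},\ldots\rangle$ and $H'_0\cap\langle b_{-m},c^2\rangle=\xi'_m(H'_0)=\langle b_{m-2},b_{m-4},\ldots\rangle$ drive the same Britton-style embedding argument. I expect the main obstacle to be the verification of the matching identities $H_0\cap B_m=\xi_m(H_0)$ (and their primed and downward analogues): they are precisely what prevents a reduced word in the sub-HNN-extension from acquiring an extra Britton pinch inside $\Xi_m$, and hence are what makes the embedding $\tilde H\hookrightarrow\Xi_m$ work.
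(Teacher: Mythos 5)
Your proof is correct, and it is the standard subgroup-of-an-HNN-extension argument: recognize $\langle b_m,t_m,t'_m\rangle$ as an internal HNN-extension over the countably generated base $H_0=\langle b_m,b_{m+1},\ldots\rangle$, verify the matching identities $H_0\cap\xi_m(F)=\xi_m(H_0)$ and $H_0\cap\xi'_m(F)=\xi'_m(H_0)$ (plus the downward analogues for $H'_0$) so that reduced words stay reduced in $\Xi_m$, and then read off the intersection from Britton's lemma. The paper itself only cites this lemma to the companion reference \cite{Auxiliary free constructions for explicit embeddings}, but the computation you give (the conjugation formulas $t_m^{-1}b_it_m=b_{2i-m+1}$, $t'_m{}^{-1}b_it'_m=b_{2i-m}$, the strong-induction argument for the easy inclusion, and the parity-plus-range check for the matching identities) is precisely the argument used there.
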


This means that the subgroup of countable rank $\langle b_m, b_{m+1},\ldots\rangle$ is benign in  $\langle b,c \rangle$, and as the finitely presented overgroup and as its finitely generated subgroup we may take $\Xi_m$ and $\langle b_m, t_m, t'_m\rangle$. Similarly, $\langle b_{m-1}, b_{m-2},\ldots\rangle$ is benign for the same finitely presented overgroup $\Xi_m$ and its finitely generated subgroup $\langle b_{m-1}, t_m, t'_m\rangle$.

\medskip
For simplicity of notations below denote $F=\langle a,b,c \rangle$ the free group of rank $3$. Take the ordinary free product $\langle a \rangle * \,\Xi_m$ which clearly is finitely presented.

\begin{Lemma}
\label{LE Ksi for G}
In the above notation the following equalities hold for any $m$ in $\langle a \rangle * \,\Xi_m$:
\begin{equation}
\label{EQ a simple example of bening subgroup}
\begin{split}
F \cap \langle b_m, t_m, t'_m\rangle = \langle b_m, b_{m+1},\ldots\rangle
\;\;\; {\it and} \;\;\;  
F \cap \langle a, b_m, t_m, t'_m\rangle = \langle a, b_m, b_{m+1},\ldots\rangle,
\hskip3mm \\
F \! \cap \!\langle b_{m-1}, t_m, t'_m\rangle  \! =  \! \langle b_{m-1}, b_{m-2},\ldots\rangle
\;\; {\it and} \;\;\,  
F \! \cap \! \langle a, b_{m-1}, t_m, t'_m\rangle \!  =  \! \langle a, b_{m-1}, b_{m-2},\ldots\rangle.
\end{split}
\end{equation}
\end{Lemma}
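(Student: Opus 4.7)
The plan is to reduce every statement to Lemma~\ref{LE Ksi} by using the free product structure of $\langle a \rangle * \Xi_m$ and, in particular, normal forms in that free product.
Observe first that $\langle b,c \rangle$ embeds unchanged as the base of the HNN-extension $\Xi_m$, so $F=\langle a,b,c\rangle$ sits inside $\langle a\rangle*\Xi_m$ as the free product $\langle a\rangle * \langle b,c \rangle$; likewise, $\langle a \rangle \cap \langle b_m, t_m, t'_m \rangle = 1$ inside $\langle a \rangle * \Xi_m$, hence $\langle a, b_m, t_m, t'_m \rangle = \langle a\rangle * \langle b_m,t_m,t'_m\rangle$, and analogously for the ``$m{-}1$'' case. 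Both free product decompositions share the same free factor $\langle a \rangle$, which is what will let the intersections slot together nicely.

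For the first equality, take any $w\in F\cap \langle b_m,t_m,t'_m\rangle$. Since $w$ lies in $\Xi_m$, its normal form as an element of $\langle a\rangle*\Xi_m$ consists of a single $\Xi_m$-syllable, namely $w$ itself. Since $w \in F = \langle a\rangle * \langle b,c\rangle$, that single syllable must belong to $\langle b,c\rangle$. Hence $F\cap \langle b_m,t_m,t'_m\rangle = \langle b,c\rangle \cap \langle b_m,t_m,t'_m\rangle$, and Lemma~\ref{LE Ksi} identifies this with $\langle b_m, b_{m+1},\ldots\rangle$. (One may also phrase this via the retraction $\rho:\langle a\rangle*\Xi_m \to \Xi_m$ sending $a\mapsto 1$, under which every element of $\langle b_m,t_m,t'_m\rangle$ is fixed, while every element of $F$ maps into $\langle b,c\rangle$.)

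For the second equality, let $H=\langle a, b_m,t_m,t'_m\rangle = \langle a\rangle * \langle b_m,t_m,t'_m\rangle$. Any $w\in H$ has a reduced form
\begin{equation*}
w = a^{i_0}\,w_1\,a^{i_1}\,w_2\,\cdots\,w_n\,a^{i_n},
\end{equation*}
with $w_j\in \langle b_m,t_m,t'_m\rangle\setminus\{1\}$ and $i_j\in\Z$ (only $i_0$, $i_n$ may be zero). This is simultaneously the reduced form of $w$ in the ambient free product $\langle a\rangle*\Xi_m$. Now $w\in F = \langle a\rangle*\langle b,c\rangle$ iff each $\Xi_m$-syllable lies in $\langle b,c\rangle$, i.e.\ iff every $w_j$ lies in $\langle b,c\rangle\cap\langle b_m,t_m,t'_m\rangle$, which by Lemma~\ref{LE Ksi} is $\langle b_m, b_{m+1},\ldots\rangle$. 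Consequently
\begin{equation*}
F\cap H \;=\; \langle a\rangle * \langle b_m, b_{m+1},\ldots\rangle \;=\; \langle a, b_m, b_{m+1},\ldots\rangle,
\end{equation*}
as required. The two ``$m{-}1$'' equalities are obtained by repeating the above arguments verbatim and substituting the second line of \eqref{EQ Ksi two equality} in place of the first.

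The main obstacle is just making certain that the normal form in the subgroup $\langle a\rangle*\langle b_m,t_m,t'_m\rangle$ of $\langle a\rangle*\Xi_m$ really coincides with the normal form in the ambient free product; but this is immediate from $\langle a\rangle\cap\Xi_m=\{1\}$ and $\langle b_m,t_m,t'_m\rangle\le\Xi_m$, so no syllable collapses occur when passing from the subgroup to the ambient group. Everything else is a routine application of Lemma~\ref{LE Ksi}.
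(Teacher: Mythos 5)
Your proof is correct, and the route — identify $F$ with $\langle a\rangle*\langle b,c\rangle$ and $\langle a,b_m,t_m,t'_m\rangle$ with $\langle a\rangle*\langle b_m,t_m,t'_m\rangle$ inside the ambient free product $\langle a\rangle*\Xi_m$, then compare normal forms syllable by syllable and invoke Lemma~\ref{LE Ksi} — is precisely what the setup of point~\ref{SU Auxiliary benign free subgroups} is designed to make possible; the paper itself delegates this lemma to the companion article on auxiliary free constructions, where the argument runs along the same lines using the free-product/amalgam intersection machinery stockpiled there.

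One small observation that would tidy the write-up: the retraction $\rho:\langle a\rangle*\Xi_m\to\Xi_m$ killing $a$, which you mention only in passing for the first equality, in fact handles \emph{all four} equalities at once if combined with the normal-form description of $H=\langle a\rangle*\langle b_m,t_m,t'_m\rangle$. For $w\in F\cap H$ with reduced form $a^{i_0}w_1a^{i_1}\cdots w_n a^{i_n}$, applying $\rho$ just deletes the $a$-syllables and leaves the product $w_1\cdots w_n$, which alone is not enough to locate each $w_j$; so the syllable-by-syllable normal-form comparison you actually carry out is the step that does the real work, and you are right to foreground it rather than the retraction. You also correctly check the reverse inclusion implicitly through Lemma~\ref{LE Ksi} (which gives $\langle b_m,b_{m+1},\ldots\rangle\subseteq\langle b_m,t_m,t'_m\rangle$), so no containment is taken for granted. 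In short: correct, complete, and essentially the intended argument.
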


This means that in $F$ a subgroup of each of the  types \;
$\langle b_m, b_{m+1},\ldots\rangle$,\;
$\langle a, b_m, b_{m+1},\ldots\rangle$,\;
$\langle b_{m-1}, b_{m-2},\ldots\rangle$,\;
$\langle a, b_{m-1}, b_{m-2},\ldots\rangle$ is benign.  For each of them as a finitely presented overgroup of $F$ we may choose the above group $\langle a \rangle * \,\Xi_m$, 
and as the finitely generated subgroup of the latter we can respectively choose the subgroups \;$\langle b_m, t_m, t'_m\rangle$,\;
$\langle a, b_m, t_m, t'_m\rangle$,\;\;
$\langle b_{m-1}, t_m, t'_m\rangle$,\;\;
$\langle a, b_{m-1}, t_m, t'_m\rangle$. 

\medskip
Further, we are able to get new benign subgroups by ``attaching'' subgroups of types discussed in Lemma~\ref{LE Ksi} and Lemma~\ref{LE Ksi for G}. For example, for two finitely presented groups
$\Xi_m *_{\langle b_m, t_m, t'_m\rangle} u$ and
$\Xi_0 *_{\langle b_{-1}, t_0, t'_0\rangle} v$ build the finitely presented
$\bigast$-construction:
\begin{equation}
\label{EQ benign sample ONE OTHER}
\left(\Xi_m *_{\langle b_m, t_m, t'_m\rangle} u \right)
\, *_{\langle b,c\rangle}
\left(\Xi_0 *_{\langle b_{-1}, t_0, t'_0\rangle} v \right).
\end{equation}
By
Corollary~\ref{CO intersection and join are benign multi-dimensional}\;\eqref{PO 2 CO intersection and join are benign multi-dimensional} the join\, 
$J=\langle \ldots b_{-2}, b_{-1};\;
b_m, b_{m+1},\ldots\rangle$
is benign in $\langle b,c \rangle$ for an arbitrary $m=1,2,\ldots$ As the respective finitely presented overgroup $K_J$ of  $\langle b,c \rangle$ we may take \eqref{EQ benign sample ONE OTHER}, and as the finitely generated subgroup $L_J$ of the latter we may pick its $4$-generator subgroup $\big\langle 
\langle b,c\rangle^u, \langle b,c\rangle^v
\big\rangle$; see Lemma~\ref{LE join in HNN extension multi-dimensional} and the details in \cite{Auxiliary free constructions for explicit embeddings} for verification of:
\begin{equation}
\label{EQ auxiliary b-2 b-1 a b_m, b_m+1}
F \cap
\big\langle 
\langle b,c\rangle^u, \langle b,c\rangle^v
\big\rangle 
=
\langle \ldots b_{-2}, b_{-1};\;
b_m, b_{m+1},\ldots\rangle.
\end{equation}

\subsection{The ``conjugates collecting'' process}
\label{SU The conjugates collecting process}

Let $\X$ and $\Y$ 
be some disjoint subsets in any group $G$. Then any element $w\in \langle \X,\Y \rangle$ can be written as:
\begin{equation*}
\label{EQ elements <X,Y>}
w=u\cdot v
= x_1^{\pm v_1}
x_2^{\pm v_2}
\cdots
x_{k}^{\pm v_k}
\cdot
v
\end{equation*}
with some $v_1,v_2,\ldots,v_k,\, v\in\langle \Y \rangle$, and 
$x_1,x_2,\ldots,x_k \in \X$. The proof, examples and variations of this fact can be found in  
point~2.6 in \cite{A modified proof for Higman}.

In particular, for $\X=\{x\}$ and $\Y=\{y\}$ in a 2-generator group $G=\langle x, y\rangle$ we may present any element $w \in G$ as a product
of some conjugates of $x$ and of some power of $y$:
\begin{equation}
\label{EQ elements from <x,y>}
w=x^{\pm y^{n_1}}\!x^{\pm y^{n_2}}\!\!\cdots\, x^{\pm y^{ n_s}}\!\!\cdot y^k=\!u\cdot v\,.
\end{equation}


\section{Writing Higman code for the embedding of $\Q$}
\label{SU Writing Higman code for the embedding}

\subsection{Embedding $\Q$ into a $2$-generator group $T_\Q$}
\label{SU Embedding Q into a 2-generator group} 

The group $\Q$ has recursive presentation:
\begin{equation}
\label{EQ Q defining relations}
\Q = \big\langle x_1, x_2,\ldots \mathrel{|} 
x_{k}^{k}=x_{k-1},\;\, k\!=\!2,3\ldots \big\rangle
\end{equation}
where the generators $x_k$ corresponds to  fractional numbers ${1 \over k!}$ with $k=2,3\ldots$, see p.~70 in \cite{Johnson}.
To bring this to a format more appropriate for our embedding rewrite each $x_k^k=x_{k-1}$ as 
$x_k^k\,x_{k-1}^{-1}=1$, and use the latter as the $k$'th relation $r_k$ of $\Q$ for  $k=2,3\ldots$ below.

A method automatically outputting the embedding of a countable group into a $2$-generator group was given earlier, see 
Introduction,
formula (1.2),
Theorem 1.1, and more specifically,
Point 3.3,
formula (3.6),
and Theorem 3.2 in \cite{Embeddings using universal words}.
In particular as $\Q$ is torsion-free, we can use the shorter formula (3.6) in \cite{Embeddings using universal words}
to map each $x_k= {1 \over k!}$ to the word: 
$$
\textstyle
\alpha\big({1 \over k!}\big)=y^{(x y^k)^{\,2} y^{\!-1}}
\in \langle x,y \rangle$$
on just two letters $x,y$.
Then the relation $x_k^k\,x_{k-1}^{-1}$ is being
simplified to:
$$
w_k (x,y)=
\big(y^{(x y^k)^{\,2} x^{\!-1}} \big)^k 
\big(y^{(x y^{k-1})^{\,2} x^{\!-1}}\big)^{-1}\!\!
=\,
(y^k)^{(x y^k)^{\,2} x^{\!-1}} 
y^{-(x y^{k-1})^{\,2} x^{\!-1}}\!\!,
$$
using which we get an embedding:
\begin{equation}
\label{EQ define alpha}
\alpha:\Q\to T_\Q
\end{equation}
into the $2$-generator recursively presented group:
\begin{equation}
\label{EQ TG genetic code}
T_\Q=\big\langle x,y 
\;\mathrel{|}\;
(y^k)^{(x y^k)^{\,2} x^{\!-1}} 
y^{-(x y^{k-1})^{\,2} x^{\!-1}}
\!\!,\;\; k=2,3,\ldots
\big\rangle.
\end{equation}
For example, the image of the rational number 
${98 \over 12}={49 \over 6}=49\cdot {1 \over 3!}$\, in $T_\Q$ is:
$$
\textstyle
\alpha\big({98 \over 12}\big)
\,=\,
\left(y^{(x y^3)^{\,2}\, x^{\!-1}}\right)^{49}
\!
=\big(y^{49}\big)^{(x y^3)^{\,2}\, x^{\!-1}}
\!\! \in T_\Q.
$$

\subsection{Getting the  Higman code $\mathcal T$ and the group $A_{\mathcal T}$ for $T_\Q$}
\label{SU Getting the  Higman code from TQ} 
For any group, given by  two generators $x,y$, Higman writes its defining relations\; $w=x^{j_0}y^{j_1}\cdots x^{j_{2r}}y^{j_{2r\!+\!1}} $ via sequences 
\begin{equation}
\label{EQ f written by j}
f=(j_0,\; j_1, \;\ldots\;, j_{2r},\; j_{2r\!+\!1})
=
\big(\,
f(0),\; f(1), \;\ldots\; , f(2r),\; f(2r\!+\!1)
\big)
\end{equation}
of integers by just recording the exponents of two generators, see p. 473 in \cite{Higman Subgroups of fP groups} (the cases $f(0)\!=\!0$, $f(2r\!+\!1)\!=\!0$ are not ruled out).
Since we intentionally prepared the $2$-generator group $T_\Q$ in the needed format \eqref{EQ TG genetic code}, it remains to write for each of its relations:
\begin{equation}
\label{EQ relations for Q detailed}
\begin{split}
w_k & = (y^k)^{(x y^k)^{\,2} x^{\!-1}} 
y^{-(x y^{k-1})^{\,2} x^{\!-1}}
\\
&=
x\,y^{-k} x^{-1} y^{-k} x^{-1} \cdot y^k \cdot x\, y^k x\;  y \cdot
x^{-1} y^{1-k} x^{-1} \cdot y^{-1} \cdot
x \, y^{k-1}\, x\, y^{k-1} x^{-1} 
\end{split}
\end{equation}
with $k=2,3,\ldots$, the respective sequence: 
\begin{equation}
\label{EQ Higman code for Q}
f_k=
\big(1,-k,-1, -k , -1,\; k,\;  1,\;  k, \; 1,\;  1, -1,\;  1\!\!-\!k, -1,-1,\;  1,\;  k\!-\!\!1,\;  1,\;  k\!-\!\!1,\;  -1\big).
\end{equation}
And for the set of the mentioned relations $w_k$ of $T_\Q$ we get the countable set $\mathcal T = \big\{ 
f_k \mathrel{|} k=2,3,\ldots \big\}$
of sequences of type \eqref{EQ Higman code for Q}.

\begin{Remark}
In Example 3.5 of \cite{The Higman operations and  embeddings} for some specific reasons the \textit{permuted} version of the above code was recorded as
$
\alpha\, f_{k} \!=\!
\big(6\times 1,\;\; \; 
6\times -1,\;\; 
2\times k,\;\; 
2\times -k,\;\; 
1\!-\!k,\;\; 
2\times (k\!-\!\!1)
\big)
$
for a suitable permutation $\alpha \in S_{19}$.
But in the current note we use \eqref{EQ Higman code for Q} instead.
\end{Remark}

In the free group $F=\langle a,b,c \rangle$ of rank $3$ for \textit{any} sequence \eqref{EQ f written by j}  from $\E$ denote 
$$
b_f = b_0^{f(0)} b_{1}^{f(1)} \cdots b_{2r}^{f(2r)} b_{2r\!+\!1}^{f(2r\!+\!1)} \in \, F
$$
where $b_i = b^{c^i}$ as in point~\ref{SU Auxiliary benign free subgroups}. Also denote $a_f = a^{b_f}$, and 
for a given set $\B$ of functions $f \in \E$
define the subgroup $A_{\B} = A_{\B}(a,b,c)=\langle a_f
\mathrel{|} f \in \B \rangle$ of $F$. In particular, when $\B$ is the set $\mathcal T$ consisting of sequences $f_k$ from \eqref{EQ Higman code for Q}, we in $F$ get the elements $b_{f_k}, a_{f_k}$, and the subgroup  $A_{\mathcal T} = \langle a_{f_k} \mathrel{|} f_k\in \mathcal T \, \rangle$.

The main objective of sections \ref{SE Construction of KB}\,--\ref{SE Construction of KF and KT} is to present a step-by-step construction mechanism to achieve the following two goals:
\begin{enumerate}
\item The set $\mathcal T$ can be constructed by Higman operations, 
\item The subgroup $A_{\mathcal T}$ is benign in $F$.
\end{enumerate}

These questions will be answered in point~\ref{SU Obtaining a larger set}, where for $A_{\mathcal T}$ we give the respective embeddings, and the groups $K_{\mathcal T}$ and $L_{\mathcal T}$  \textit{explicitly}.

\section{Construction of $K_{\mathcal B}$ 
}
\label{SE Construction of KB}

\subsection{Building the group $\mathscr{A}$}
\label{SU Building the group script A} 

Denote by $\mathcal B$ the set of all sequences $f$ from set $\mathcal E_2$ of type either $(0,n)$ or $(n\!+\!1,\, n)$ for all $n\in \Z$. In terms of the Higman operations this is the set $\upsilon(\zeta_{\!1} \Zz , \tau\S)$, see Agreement~\ref{AG Agreement about Higman operations}. 
The objective of this section is to prove that $A_{\mathcal B} = \langle a_f \mathrel{|} f\in \mathcal B \rangle$ is benign in $F$, and to explicitly write  the related auxiliary groups in point~\ref{SU Writing K_B by generators and defining relations}.

Turning back to the notation of point~\ref{SU Auxiliary benign free subgroups},
and taking $m\!=\!1$ in Lemma~\ref{LE Ksi for G} we see that the subgroup 
$\langle b_1, b_2,\ldots \rangle$ is benign in $F$ for the finitely presented overgroup $\Theta = \langle a \rangle * \Xi_1$, and for its finitely generated subgroup $\langle b_1, t_1, t'_1\rangle$.
Also the subgroup $\langle a, b_{0}, b_{-1},\ldots\rangle$ is benign in $F$ for the same finitely presented $\Theta$ and for its finitely generated subgroup $\langle a, b_{0}, t_1, t'_1\rangle$.
Use these groups to build the finitely presented
$\bigast$-construction:
$$
\mathscr{C} 
= \big(\Theta *_{\langle b_1, t_1, t'_1 \rangle} u_1 \big)
\,*_{\Theta}
\big(\Theta *_{\langle a, b_{0}, t_1, t'_1 \rangle} u_2 \big)
$$
and notice that in $\mathscr{C}$ the conjugates $F^{u_1}$ and $F^{u_2}$ of $F$ have a useful feature: 

\begin{Lemma}
\label{LE GuGv is free}
In the above notations $F^{u_1}$ and $F^{u_2}$ generate the free product $F^{u_1}* F^{u_2}$ inside $\mathscr{C}$.
\end{Lemma}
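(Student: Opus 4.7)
The plan is to recognise $\mathscr{C}$ as an instance of the $\bigast$-construction of point~\ref{SU The *-construction} and then invoke Corollary~\ref{CO smaller free product to the larger free product} about transferring free-product decompositions through the construction. Concretely, set $M=\Theta$, $K_1=K_2=\Theta$, $L_1=\langle b_1, t_1, t'_1\rangle$, $L_2=\langle a,b_{0}, t_1, t'_1\rangle$, and take the stable letters $u_1,u_2$ to play the role of $t_1,t_2$; then by definition
$$
\mathscr{C}\;=\;\bigast_{i=1}^{2}(K_i,L_i,u_i)_{\Theta}.
$$
Regarding $F\le \Theta=M$ as the distinguished subgroup $G$ of the $\bigast$-construction, put $A_i=F\cap L_i$. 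Lemma~\ref{LE Ksi for G} (with $m=1$) immediately yields
$$
A_1=\langle b_1,b_2,\ldots\rangle,\qquad A_2=\langle a, b_0,b_{-1},\ldots\rangle.
$$

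Next I would apply Lemma~\ref{LE intersection in bigger group multi-dimensional}, which gives that inside $\mathscr{C}$
$$
\langle F,u_1,u_2\rangle \;=\; F *_{A_1,A_2}(u_1,u_2),
$$
so that the two conjugates $F^{u_1}$ and $F^{u_2}$ live inside this HNN-extension. Now Corollary~\ref{CO smaller free product to the larger free product} reduces the whole problem to verifying that the join $\langle A_1\cup A_2\rangle$ coincides with the \emph{internal} free product $A_1*A_2$ inside $F$.

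The only real content of the argument is this last verification, and it is essentially a statement about $F=\langle a,b,c\rangle$. Since $\langle b,c\rangle$ is free of rank~$2$, its subgroup $\langle b_i\mid i\in\Z\rangle$ equals the normal closure of $b$ in $\langle b,c\rangle$ and is therefore freely generated by the conjugates $\{b_i\mid i\in\Z\}$; adjoining $a$ (which, together with $b,c$, freely generates $F$) gives that $\langle a\rangle\cup\{b_i\mid i\in\Z\}$ is a free basis of the subgroup $\langle a,b_i\mid i\in\Z\rangle\le F$. In particular this subgroup splits as the free product $\langle a\rangle * \prod_{i\in\Z}^{\!*}\langle b_i\rangle$. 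Since the free-generator sets of $A_1$ and $A_2$ are disjoint subsets of this basis (indices $\ge 1$ for $A_1$; the letter $a$ together with indices $\le 0$ for $A_2$), the join $\langle A_1\cup A_2\rangle$ is exactly the internal free product $A_1*A_2$ inside $F$.

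Feeding this back into Corollary~\ref{CO smaller free product to the larger free product} yields $\langle F^{u_1},F^{u_2}\rangle = F^{u_1}*F^{u_2}$ inside $F*_{A_1,A_2}(u_1,u_2)$, and hence inside $\mathscr{C}$, as required. The main (minor) obstacle is to make the free-product decomposition of $\langle A_1\cup A_2\rangle$ inside $F$ rigorous; once that is in hand the rest is a direct assembly of the lemmas of Section~\ref{SU The *-construction}.
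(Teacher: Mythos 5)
Your proof is correct and follows essentially the same route as the paper: identify $\mathscr{C}$ as a $\bigast$-construction, use Lemma~\ref{LE intersection in bigger group multi-dimensional} together with Lemma~\ref{LE Ksi for G} to get $\langle F,u_1,u_2\rangle = F*_{A_1,A_2}(u_1,u_2)$, and then apply Corollary~\ref{CO smaller free product to the larger free product}. The only difference is that you explicitly justify the step that $A_1$ and $A_2$ generate their free product inside $F$ (via the free basis $\{a\}\cup\{b_i\mid i\in\Z\}$), which the paper asserts without proof.
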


\begin{proof}
Applying Lemma~\ref{LE intersection in bigger group multi-dimensional} to $\mathscr{C}$, and then Lemma~\ref{LE Ksi for G}
we see that $\mathscr{C}$ contains: 
$$
F*_{F \,\cap\, \langle b_1, t_1, t'_1 \rangle, 
\;\;\; 
F\,\cap\,\langle a, b_{0}, t_1, t'_1 \rangle} 
(u_1,u_2)
\;=\;
F*_{\langle b_1, b_2,\ldots  \rangle, \;\; \langle a, b_{0}, b_{-1},\ldots\rangle} (u_1,u_2).
$$
But in $F$ the subgroups 
$\langle b_1, b_2,\ldots  \rangle$ and $\langle a, b_{0}, b_{-1},\ldots\rangle$
generate their \textit{free} product.
Hence, we can apply Corollary~\ref{CO smaller free product to the larger free product} to get that  $F^{u_1}$ and $F^{u_2}$ also generate their free product.
\end{proof}

This lemma lets us continue \textit{any} two isomorphisms defined on $F^{u_1}$ and on $F^{u_2}$ to an isomorphism on the subgroup  $F^{u_1}* F^{u_2}$ of $\mathscr{C}$.
As such take the trivial automorphism in $F^{\,u_1}$ and the conjugation by $b^{u_2}$ in $F^{\,u_2}$\!\!,\; then denote their common continuation in $F^{u_1}* F^{u_2}$ by $\omega$.
Inside $F$ this $\omega$ leaves the elements $ b_1, b_2,\ldots $ intact, and $\omega$ sends $a, b_{0}, b_{-1},\ldots$ to their conjugates $a^b\!,\, b_{0}^b,\, b_{-1}^b,\ldots$
Also, $\omega$ can be determined by its values on just \textit{six} conjugates $a^{u_1}, b^{u_1}, c^{u_1}, a^{u_2}, b^{u_2}, c^{u_2}$\!.

Lastly, denote by $\delta$ the isomorphism of $F$ sending $a, b, c$ to $a, b^c, c$. Then HNN-extension
$$
\mathscr{A} = \mathscr{C} \! *_{\omega, \delta}\!(d,e)
$$
is finitely presented because $\mathscr{C}$ is finitely presented, while $\omega, \delta$ can be determined by their values on just \textit{nine} elements mentioned above. 
In analogy with $b_i=b^{c^i}$ denote $d_i = d^{e^i}$ in $\mathscr{A}$.
The following lemma displays the main effect for the sake of which the group $\mathscr{A}$ was so constructed (recall the notations $f_{j}^+$ and $f_{j}^-$ from point~\ref{SU Integer functions f}):

\begin{Lemma}
\label{LE action of d_m on f} For any $f \in \mathcal E$ and any $j=0,1,\ldots$ 
we have 
$
a_f^{d_j} =\! a_{f_{j}^+}$ 
and
$
a_f^{\,d_j^{-1}}\!\! = a_{f_{j}^-}
$ in $\mathscr{A}$.
\end{Lemma}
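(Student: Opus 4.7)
The plan is a direct calculation in $\mathscr{A}$ exploiting the factorisation $d_j = e^{-j} d e^j$, so that $g^{d_j} = \delta^j\bigl(\omega(\delta^{-j}(g))\bigr)$ whenever the intermediate elements lie in the appropriate domains. Conjugation by $e$ realises $\delta$, which shifts indices ($\delta(b_i) = b_{i+1}$) and fixes $a$; conjugation by $d$ realises $\omega$, which by construction fixes each $b_i$ with $i \ge 1$ and sends each of $a, b_0, b_{-1}, \ldots$ to its $b$-conjugate. Composing these operations on the generators of $F$, I expect to obtain the pointwise formulas
\[
b_i^{d_j} = \begin{cases} b_i & \text{if } i \ge j+1, \\ b_j^{-1} b_i b_j & \text{if } i \le j, \end{cases} \qquad a^{d_j} = a^{b_j},
\]
together with the symmetric versions for $d_j^{-1}$ obtained from $\omega^{-1}$, i.e.\ by replacing $b_j^{-1} b_i b_j$ with $b_j b_i b_j^{-1}$ and $a^{b_j}$ with $a^{b_j^{-1}}$.

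With these pointwise actions in hand, I will split $b_f = PQ$ where $P = b_0^{f(0)} \cdots b_j^{f(j)}$ collects the factors of index $\le j$ and $Q = b_{j+1}^{f(j+1)} \cdots b_{2r+1}^{f(2r+1)}$ collects those of index $\ge j+1$. Applying $(\,\cdot\,)^{d_j}$ termwise, the copies of $b_j^{\pm 1}$ introduced around the individual factors of $P$ telescope, yielding $b_f^{d_j} = b_j^{-1} P b_j \cdot Q$, while $Q^{d_j} = Q$. Since conjugation is a homomorphism and $a^{d_j} = a^{b_j}$, the two excess copies of $b_j$ cancel around $a$:
\[
a_f^{d_j} = (b_f^{d_j})^{-1}\, a^{d_j}\, b_f^{d_j} = Q^{-1} b_j^{-1} P^{-1}\, a\, P b_j Q = a^{P b_j Q}.
\]
The identity $P b_j Q = b_{f_j^+}$ is immediate from the definition of $f_j^+$, which inserts exactly one extra $b_j$ between $P$ and $Q$; this gives $a_f^{d_j} = a_{f_j^+}$. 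The second identity is proved by the symmetric computation with $\omega^{-1}$ in place of $\omega$.

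The only delicate point is verifying that each $b_i$, and hence $b_f$ itself, actually lies in the domain $F^{u_1} * F^{u_2}$ of $\omega$, so that conjugation by $d$ in $\mathscr{A}$ may legitimately be replaced by $\omega$. This is exactly what the benign subgroups chosen in the construction of $\mathscr{C}$ achieve: Lemma~\ref{LE Ksi for G} gives $b_i \in F \cap F^{u_1}$ for $i \ge 1$ and $b_i \in F \cap F^{u_2}$ for $i \le 0$, so every factor of $b_f$ — and therefore $b_f$ — lies in the domain. Beyond this membership check, everything reduces to elementary word manipulations in $F$, the only mild bookkeeping issue being the degenerate case $i = j$, in which the conjugation by $b_j$ is trivial but still contributes formally to the telescoping argument.
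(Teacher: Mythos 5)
Your proof is correct and follows essentially the same route the paper takes: it decomposes $d_j = e^{-j} d e^j$, works out the pointwise action of $d_j$ on $a$ and the $b_i$'s via $\delta$ and $\omega$, and then telescopes the inserted copies of $b_j^{\pm 1}$ — exactly the mechanism the paper illustrates with its worked example for $f=(2,5,3)$, $j=1$, and cites from the companion paper for the general case. Your concluding remark about verifying $b_i$ lies in the domain of $\omega$ is the right delicacy to flag, and it is exactly what the construction of $\mathscr{C}$ and Lemma~\ref{LE Ksi for G} are designed to guarantee.
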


Since this lemma was proved earlier via equations (4.1), (4.2) and Lemma~4.1 in \cite{A modified proof for Higman} 
with slightly difference notations,  we omit the formal proof and bring a simple example that fully explains the proof argument.
If  $f=(2,5,3)$ and $j=1$, then $b_f=b_0^{2}\,b_1^5\,b_2^{3}$, and 
\begin{equation*}
\begin{split}
a_f^{d_1} & =
\big(
b_2^{\!-3}b_1^{\!-5}b_0^{\!-2}
\; a\;
b_0^{2}b_1^{5}b_2^{3} \,
\big)^{d_1}
\!\! \\
& = 
b_2^{-3}\,
\big(b_1^{\!-1} b_1^{\!-5} b_1^{\vphantom8}\big)
\big(b_1^{\!-1} b_0^{\!-2} b_1^{\vphantom8}\big)
\,\, \big(b_1^{\!-1} a b_1^{\vphantom8}\big) \,
\big(b_1^{\!-1} b_0^{2}b_1^{\vphantom8}\big)\,
\big(b_1^{\!-1} b_1^{5}b_1^{\vphantom8}\big)
\,b_2^{3}
\end{split}
\end{equation*}
which after simple cancellations is equal to
$
b_2^{\!-3}b_1^{\!-\,6}  b_0^{\!-2}
\;a\;
b_0^{2}b_1^6b_2^{3}
= a_{f_{1}^+}
$
with the sequence
$f_{1}^+ \!= (2,\,\boldsymbol{5\!+\!1}\,,3)= (2,\boldsymbol{6},3)$.
And in a similar way if $j=2$, then  we have $a_f^{d_2}=a_{f_2^+}\!=a_{f^+}$ where 
$f_{2}^+ \!= f^+ \!\!= (2,5,\,\boldsymbol{3\!+\!1})= (2,5,\boldsymbol{4})$.

\subsection{$\zeta_1 \mathcal Z$ is benign in $F$}
\label{SU zeta_1 Z is benign} 
Denote $N_1=\langle
a, d_1
\rangle$,\;
$B_1=\langle
a_{(0,n)} \mathrel{|} n\in \Z
\rangle=A_{\zeta_1 \mathcal Z}$, 
and show that $F \cap N_1 = B_1$ in $\mathscr{A}$.
Applying Lemma~\ref{LE action of d_m on f} for $a$ and $d_1$ repeatedly we 
get:
\begin{equation}
\label{EQ action of d1}
\begin{split}
a^{d_1^n} 
& =
a_{(0,0)}^{d_1^n} = 
\Big(\big(a^{b_0^0 b_1^0 }\big)^{d_1}\Big)^{d_1^{n-1}}
\hskip-2mm
= 
\big(a^{b_0^0 b_1^{0+1} }\big)^{d_1^{n-1}}
\hskip-1mm  \\
& 
= 
\big(a^{b_0^0 b_1^{1+1} }\big)^{d_1^{n-2}}
\hskip-1mm= \cdots= 
a^{b_0^0 b_1^n}=
a_{(0,n)}\, ,
\end{split}
\end{equation}
that is, $a_{(0,n)}\in N_1$ for any $n\in \Z$.
\;
On the other hand, using \eqref{EQ elements from <x,y>} 
for $x=a$
and
$y= d_1$,
we can rewrite each $w\in N_1= \langle
a, d_1
\rangle$
as $w=u\cdot v$, where $u$ is a product of some conjugates $a^{\pm d_1^{n_i}}$\!,\, and some $v=d_1^k$\!.
By above construction all those conjugates are in $F$.
Thus, if also $w \in F$, then  $v\in F$.
But that is possible for  $v=1$ only
because $d_1^n$ is a word in stable letters $d,e$ only, and in $\mathscr{A}$ such a word is in $\mathscr{C}$ (and in $F$) only if it is trivial.

\subsection{$\tau \mathcal S$ is benign in $F$}
\label{SU KB is benign} 
Denote $N_2=\langle
a^b, d_0 d_1
\rangle$,\,
$B_2=\langle
a_{(n+1,n)} \mathrel{|} n\in \Z
\rangle
=A_{\tau \mathcal S}$, 
and prove that $F \cap N_2 = B_2$  in $\mathscr{A}$.
We just a little modify the above calculations to get:
\begin{equation}
\label{EQ action of d0d1}
\begin{split}
(a^b)^{(d_0 d_1)^n}
& =
a_{(1,0)}^{(d_0 d_1)^n}
=\big(a_{(1,0)}^{d_0 }\big)^{d_1  \, (d_0 d_1)^{n-1}}
\!\!\!\!
\\
& =  
a_{(1+1,\,0)}^{d_1  \, (d_0 d_1)^{n-1}}
\!\!\!
=a_{(2,\,0+1)}^{(d_0 d_1)^{n-1}}
\!\!\!=a_{(3,2)}^{(d_0 d_1)^{n-2}}
\!\!\!= \cdots 
=a_{(n+1,n)},
\end{split}
\end{equation}
that is, $a_{(n+1,n)}\! \in N_2$ for any $n\in \Z$. 
On the other hand, again using \eqref{EQ elements from <x,y>} 
for $x=a^b$
and
$y= d_0 d_1$, plus the fact that
$(d_0 d_1)^n$ is a word in stable letters $d,e$ only, we using \eqref{EQ elements from <x,y>} get the equality needed.

\subsection{${\mathcal B}=\upsilon(\zeta_{\!1} \Zz , \tau\S)$ is benign in $F$}
\label{SU union zeta Z and tau S is benign} 

Now it is easy to show that  $A_{\mathcal B} = \langle a_f \mathrel{|} f\in \mathcal B \rangle$ is benign in $F$ for ${\mathcal B}=\upsilon(\zeta_{\!1} \Zz , \tau\S)$.
The $\bigast$-construction:
$$
\mathscr{B}
= 
(\mathscr{A} *_{N_1} v_1) *_{\mathscr{A}}
(\mathscr{A} *_{N_2} v_2)
$$
clearly is finitely presented, and in the notations of point~\ref{SU The *-construction} it is noting but\; ${\bigast_{i=1}^2}(\mathscr{A}, N_i, v_i)_{\mathscr{A}}$.  
By Lemma~\ref{LE intersection in bigger group multi-dimensional} the group $\mathscr{B}$, in fact, is equal to $\mathscr{A} *_{N_1, N_2} (v_1, v_2)$. 

Since $A_{\B}=\langle  A_{\zeta_1 \mathcal Z}, A_{\tau \mathcal S}  \rangle$, then 
by Corollary~\ref{CO intersection and join are benign multi-dimensional}\;\eqref{PO 2 CO intersection and join are benign multi-dimensional} the group
$A_{\B}$ is benign in $F$ for the finitely presented group $K_{\B}=\mathscr{B}$ and its finitely generated subgroup 
$L_{\B}=\langle 
F^{\,v_1}, F^{\,v_2}
\rangle$.

\subsection{Writing $K_{\B}$ by generators and defining relations}
\label{SU Writing K_B by generators and defining relations}

From definition of $\xi_m$, $\xi_m$ and $\Xi_m$ given in point~\ref{SU Auxiliary benign free subgroups} we deduce 
$
\Xi_m 
= \big\langle b, c, t_m, t'_m \mathrel{\;|\;} 
b^{t_m}= b^{c^{-m+1}}\!\!,\;
b^{t'_m}= b^{c^{-m}}\!\!,\;
c^{t_m}=c^{t'_m}=c^2
\big\rangle
$.

From definition of $\Theta$, $\mathscr{C}$ and $\mathscr{A}$
in point~\ref{SU Building the group script A} we have $\Theta=\big\langle a, b, c, t_1, t'_1 \mathrel{\;|\;}  
b^{t_1}= b,\;
b^{t'_1}= b^{c^{-1}}\!\!,\;
c^{t_1}=c^{t'_1}=c^2
\big\rangle$, \;
$\mathscr{C}= \big\langle a, b, c, t_1, t'_1, u_1, u_2 \mathrel{\;|\;}  
b^{t_1}= b,\;
b^{t'_1}= b^{c^{-1}}\!\!,\;
c^{t_1}=c^{t'_1}=c^2; 
\text{$u_1$ fixes} $
$\text{ $b^c, t_1, t'_1$};\;\;\;
\text{$u_2$ fixes $a, b, t_1, t'_1$}
\big\rangle$, and: %
\begin{equation}
\label{EQ relations A}
\begin{split}
\mathscr{A} 
& \!=\! 
 \big\langle a, b, c, t_1, t'_1, u_1, u_2, d,e \mathrel{\;|\;}  
b^{t_1}= b,\;
b^{t'_1}= b^{c^{-1}}\!\!,\;
c^{t_1}=c^{t'_1}=c^2; \\[-2pt]
& \hskip11mm 
\text{$u_1$ fixes $b^c\!, t_1, t'_1$};\;
\text{$u_2$ fixes $a, b, t_1, t'_1$}; \\[-2pt]
& \hskip11mm 
\text{$d$ fixes $a^{u_1}\!\!,\; b^{u_1}\!\!,\; c^{u_1}$ and sends $a^{u_2}\!\!,\; b^{u_2}\!\!,\; c^{u_2}$ to $a^{b u_2}\!,\; b^{u_2}\!,\; c^{b u_2}$};\\[-2pt]
& \hskip11mm 
\text{$e$ sends $a,b,c$ to $a,b^c\!,\; c$}
\big\rangle.
\end{split}
\end{equation}

%
%
\noindent
Hence $\mathscr{A}$ is given by $9$
generators and $4+7+6+3=20$ relations.

From definition of $K_{\B}=\mathscr{B}$ given in point~\ref{SU Auxiliary benign free subgroups}:
\begin{equation}
\label{EQ relations B}
\begin{split}
K_{\B}
& =  \big\langle a, b, c, t_1, t'_1, u_1, u_2, d,e, v_1, v_2 \mathrel{\;|\;} 
\text{$20$ relations of $\mathscr{A}$ from \eqref{EQ relations A}};\\[-2pt]
& \hskip11mm 
\text{$v_1$ fixes $a,d^e$};\;\;
\text{$v_2$ fixes $a^b\!,\, d d^e$}
\big\rangle .
\end{split}
\end{equation}
%
%
Hence $K_{\B}$ is given by $11$
generators and $20+4=24$ relations. And the respective finitely presented subgroup in 
$K_{\B}$ is  
$L_{\B}=\langle 
a^{v_1}\!,\, b^{v_1}\!,\, c^{v_1}\!,\,
a^{v_2}\!,\, b^{v_2}\!,\, c^{v_2}
\rangle.$

\section{Construction of $K_{\omega_2 \mathcal B}$ 
}
\label{SE Construction of Ksigma2 omega}

\subsection{The $\omega_2$ operation}
\label{SU The omega2 operation} 

For any subset $\B \subseteq \E$  
Higman defines
$\omega_2(\mathcal B)$ to be the set of all sequences $f\!\in\E$ for which for every $i\!\in\! \Z$ the pair $\big(f(2i\!+\!0),\, f(2i\!+\!1)\big)$ belongs to $\B$ \, \cite{Higman Subgroups of fP groups}. 
In other words, this operation just constructs new sequences $f$ by concatenation of some sequences of length $2$ picked from $\mathcal B$.
Say, if $\mathcal B$ contains
$(0, 0)$, 
$(3, 2)$,
$(9, 8)$, 
then in $\omega_2(\mathcal B)$ we have sequences like 
$(3,2,9,8)$, $(0, 0, 9,8, 3,2, 0, 0, 9,8)$, etc.      

From $A_\B=\langle a_f \mathrel{|} f\in \B\rangle$ of $F$ one may navigate to its overgroup $A_{\omega_2 \B}=\langle a_{\omega_2 \B} \mathrel{|} f\in \B\rangle$ with some important properties of $A_\B$ inherited by $A_{\omega_2 \B}$. 
In this section we prove that if  $A_\B$  is benign in $F$ for an \textit{explicitly} given finitely presented overgroup $K = K_\B$ holding $F$, and for its finitely generated subgroup $L_\B\le K_\B$, then 
$A_{\omega_m \B}$ also is benign in $F$, and the respective $K_{\omega_2 \B}$ and $L_{\omega_2 \B}$ can also be constructed \textit{explicitly}.

Actually, at the end of the current section we are going to take as $\B$ the set 
${\mathcal B}=\upsilon(\zeta_{\!1} \Zz , \tau\S)$ from previous section, with two groups 
$K_\B=\mathscr{B}$ and 
$L_\B=\langle 
F^{\,v_1}, F^{\,v_2}
\rangle$ 
constructed in point~\ref{SU union zeta Z and tau S is benign}, but for the time being it is more comfortable to work with a generic $\B$.

\subsection{The groups $\Xi_2$,  $\Xi_0$, $\Gamma$ and $\mathscr{G}$}
\label{SU The groups Xi Gamma}     

In the notation of point~\ref{SU Auxiliary benign free subgroups} for two values $m\!=\!2$ and $m\!=\!0$ take pairs of isomorphisms $\xi_2, \xi'_2$ and  $\xi_0, \xi'_0$, and construct two HNN-extensions\;
$\Xi_2 \!=\! \langle b,c \rangle *_{\xi_2, \xi'_2} (t_2, t'_2)$ \;and\; $\Xi_0\! = \!\langle b,c \rangle *_{\xi_0, \xi'_0} (t_0, t'_0)$.
As particular cases of Lemma~\ref{LE Ksi}:
\begin{Corollary}
\label{CO Ksi corollary}
In the above notations:
\begin{enumerate}
\item 
\label{PO 1 CO Ksi corollary}
$\langle b,c \rangle \cap \langle b_2, t_2, t'_2\rangle = \langle b_2, b_{3},\ldots\rangle$ holds in $\Xi_2$,

\item 
\label{PO 2 CO Ksi corollary}
$\langle b,c \rangle \cap \langle b_{-1}, t_0, t'_0\rangle = \langle b_{-1}, b_{-2},\ldots\rangle$ holds in $\Xi_0$.
\end{enumerate}
\end{Corollary}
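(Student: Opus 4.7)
The proof plan is essentially trivial: the corollary is labeled ``particular cases of Lemma~\ref{LE Ksi}'', so my approach is just to instantiate that lemma at the appropriate values of $m$. Lemma~\ref{LE Ksi} states both equalities
\[
\langle b,c \rangle \cap \langle b_m, t_m, t'_m\rangle = \langle b_m, b_{m+1},\ldots\rangle
\quad \text{and} \quad
\langle b,c \rangle \cap \langle b_{m-1}, t_m, t'_m\rangle = \langle b_{m-1}, b_{m-2},\ldots\rangle
\]
in $\Xi_m = \langle b,c \rangle *_{\xi_m, \xi'_m} (t_m, t'_m)$ for every integer $m$.

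For part~\eqref{PO 1 CO Ksi corollary} I would set $m=2$ in the \emph{first} of these two equalities: the group $\Xi_m$ becomes exactly the $\Xi_2$ built in point~\ref{SU The groups Xi Gamma} from $\xi_2,\xi'_2$ and the stable letters $t_2,t'_2$, and the identity specializes to $\langle b,c\rangle \cap \langle b_2, t_2, t'_2\rangle = \langle b_2, b_3,\ldots\rangle$. For part~\eqref{PO 2 CO Ksi corollary} I would instead set $m=0$ in the \emph{second} equality: $\Xi_m$ becomes $\Xi_0$, and the identity reads $\langle b,c\rangle \cap \langle b_{-1}, t_0, t'_0\rangle = \langle b_{-1}, b_{-2},\ldots\rangle$, exactly as asserted.

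There is no genuine obstacle here, since all the work has been done in Lemma~\ref{LE Ksi} (proved in \cite{Auxiliary free constructions for explicit embeddings}); the only thing to verify is that the defining isomorphisms $\xi_2,\xi'_2$ and $\xi_0,\xi'_0$ of the corollary's $\Xi_2$ and $\Xi_0$ agree with the generic recipe $\xi_m(b)=b_{-m+1}$, $\xi'_m(b)=b_{-m}$, $\xi_m(c)=\xi'_m(c)=c^2$ used in Lemma~\ref{LE Ksi}. This is immediate from the definition in point~\ref{SU Auxiliary benign free subgroups}, so the corollary follows by direct substitution.
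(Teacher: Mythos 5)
Your proposal is correct and is precisely the argument the paper intends: the text introduces the corollary with the phrase ``As particular cases of Lemma~\ref{LE Ksi}'' and gives no further proof, so specializing the first equality of Lemma~\ref{LE Ksi} at $m=2$ and the second at $m=0$ is exactly what is expected.
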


In $\langle b,c \rangle$ form the join 
$B_2 = \langle \ldots b_{-2}, b_{-1};\;
b_2, b_{3},\ldots\rangle$ generated by the above $\langle b_2, b_{3},\ldots\rangle$ and $\langle b_{-1}, b_{-2},\ldots\rangle$. The $\textstyle{\bigast}$-construction:
$$
\mathscr{Z}
= \left(\Xi_2 *_{\langle b_2, t_2, t'_2\rangle} r_1 \right)
\, *_{\langle b,c\rangle}
\left(\Xi_0 *_{\langle b_{-1}, t_0, t'_0\rangle} r_2 \right)
$$
is finitely presented.  We can use Corollary~\ref{CO intersection and join are benign multi-dimensional}\;\eqref{PO 1 CO intersection and join are benign multi-dimensional}, Lemma~\ref{LE join in HNN extension multi-dimensional} and Lemma~\ref{LE intersection in bigger group multi-dimensional} to obtain:

\begin{Corollary}
\label{CO B2 is bening}
The subgroup $B_2$ is benign in $\langle b,c \rangle$. As a finitely presented overgroup of $\langle b,c \rangle$ one can take $\mathscr{Z}$, and as a finitely generated subgroup of $\mathscr{Z}$ one can take $P_2\!=\big\langle\langle b,c\rangle^{r_1} \!,\, \langle b,c\rangle^{r_2} \big\rangle$.
\end{Corollary}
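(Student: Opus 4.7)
The plan is to derive this corollary as a direct instance of the general $\bigast$-construction machinery of point~\ref{SU The *-construction}, feeding into it the two benignness statements furnished by Corollary~\ref{CO Ksi corollary}. First, by Corollary~\ref{CO Ksi corollary}\eqref{PO 1 CO Ksi corollary} the subgroup $\langle b_2, b_3, \ldots\rangle$ is benign in $\langle b,c \rangle$, with $\Xi_2$ as the finitely presented overgroup and $\langle b_2, t_2, t'_2\rangle$ as the finitely generated subgroup; analogously, by \eqref{PO 2 CO Ksi corollary}, the subgroup $\langle b_{-1}, b_{-2}, \ldots\rangle$ is benign in $\langle b,c \rangle$, with $\Xi_0$ and $\langle b_{-1}, t_0, t'_0\rangle$. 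Since $B_2$ is by definition the join of these two subgroups, Corollary~\ref{CO intersection and join are benign multi-dimensional}\eqref{PO 2 CO intersection and join are benign multi-dimensional} already guarantees that $B_2$ is benign in $\langle b,c\rangle$, together with an explicit construction of the witnessing pair.

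To identify this witnessing pair with the specific $\mathscr{Z}$ and $P_2$ claimed in the statement, I would observe that $\mathscr{Z}$ is precisely the two-term $\bigast$-construction $\bigast_{i=1}^{2}(K_i, L_i, r_i)_{M}$ of point~\ref{SU The *-construction}, taken with $M = \langle b,c\rangle$, $K_1 = \Xi_2$, $L_1 = \langle b_2, t_2, t'_2\rangle$, $K_2 = \Xi_0$ and $L_2 = \langle b_{-1}, t_0, t'_0\rangle$. Setting $G = \langle b,c\rangle$ and $A_i = G \cap L_i$, Corollary~\ref{CO Ksi corollary} identifies $A_1 = \langle b_2, b_3, \ldots\rangle$ and $A_2 = \langle b_{-1}, b_{-2}, \ldots\rangle$, so that their join in $G$ is exactly $B_2$.

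Applying Lemma~\ref{LE intersection in bigger group multi-dimensional} inside $\mathscr{Z}$, the subgroup generated by $G, r_1, r_2$ is the ordinary HNN-extension $G *_{A_1, A_2}(r_1, r_2)$. Then Lemma~\ref{LE join in HNN extension multi-dimensional}, applied to this HNN-extension with the two associated subgroups $A_1, A_2$, yields
\[
G \cap \big\langle G^{r_1}, G^{r_2} \big\rangle = \big\langle A_1 \cup A_2 \big\rangle = B_2,
\]
which is precisely the intersection equality $\langle b,c\rangle \cap P_2 = B_2$ needed for benignness. Finite presentation of $\mathscr{Z}$ is automatic, since it is assembled by HNN-extensions of the finitely presented $\Xi_2, \Xi_0$ along finitely generated associated subgroups, followed by an amalgamation along the finitely generated $\langle b,c\rangle$; and $P_2$ is generated by the four elements $b^{r_1}, c^{r_1}, b^{r_2}, c^{r_2}$, hence finitely generated.

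No serious obstacle is expected: the argument is essentially a bookkeeping exercise that matches the data of Corollary~\ref{CO Ksi corollary} against the inputs of the $\bigast$-construction lemmas. The only care required is to verify that the two associated subgroups $L_1, L_2$ meet $G$ in the correct ``one-sided'' pieces $\langle b_2, b_3,\ldots\rangle$ and $\langle b_{-1}, b_{-2},\ldots\rangle$, which is already guaranteed by the choice of indices $m=2$ and $m=0$ in the definition of $\Xi_2$ and $\Xi_0$ via point~\ref{SU Auxiliary benign free subgroups}.
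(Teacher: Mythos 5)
Your proposal is correct and follows essentially the same route as the paper: appeal to Corollary~\ref{CO Ksi corollary} for the two one-sided benign subgroups, identify $\mathscr{Z}$ as the $\bigast$-construction ${\bigast}_{i=1}^{2}(\Xi_i, L_i, r_i)_{\langle b,c\rangle}$, and then apply Lemma~\ref{LE intersection in bigger group multi-dimensional} and Lemma~\ref{LE join in HNN extension multi-dimensional} (together with the join clause of Corollary~\ref{CO intersection and join are benign multi-dimensional}) to get $\langle b,c\rangle \cap P_2 = B_2$. The only discrepancy is that the paper's text cites part~\eqref{PO 1 CO intersection and join are benign multi-dimensional} of that corollary, whereas, since $B_2$ is a \emph{join}, part~\eqref{PO 2 CO intersection and join are benign multi-dimensional} is the relevant clause, as you correctly use; this appears to be a small typographical slip in the paper rather than a different argument.
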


Within this section the letters $g,h,k$ were not yet used, and hence we  may now involve some of them in our free constructions. 
Denote:
\begin{equation}
\label{EQ second Gamma}
\Gamma = \langle b,c \rangle *_{B_2} (g,h,k)
\quad {\rm and} \quad
\mathscr{G} = \mathscr{Z} *_{P_2} (g,h,k)
\end{equation}
with $g,h,k$ fixing the subgroups $B_2$ and $P_2$ respectively. 
The second group $\mathscr{G}$ clearly is finitely presented, and taking into account $\langle b,c\rangle
\cap\,
P_2
 =B_2$ from 
Corollary~\ref{CO B2 is bening} we by 
\cite[Corollary~3.5\;(1)]{Auxiliary free constructions for explicit embeddings}
and \cite[Remark~3.6\;(1)]{Auxiliary free constructions for explicit embeddings}
 have that $\Gamma$ is a subgroup of $\mathscr{G}$, i.e.:

\begin{Lemma} 
\label{LE about alternative Gamma+}
In above notations we have $\langle b,c, g,h,k\rangle=\Gamma$ 
in $\mathscr{G}$.
\end{Lemma}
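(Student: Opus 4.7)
The plan is to exploit the HNN-extension structure of $\mathscr{G}$ over its base $\mathscr{Z}$ with stable letters $g,h,k$. The subgroup $\langle b,c,g,h,k\rangle$ of $\mathscr{G}$ is generated by the subgroup $\langle b,c\rangle$ of the base $\mathscr{Z}$ together with the three stable letters. The standard subgroup principle for HNN extensions says that such a subgroup is itself naturally an HNN extension of $\langle b,c\rangle$ with the same stable letters, provided that the associated subgroup of the new HNN extension is taken to be $\langle b,c\rangle\cap P_2$. This is precisely what \cite[Corollary 3.5(1)]{Auxiliary free constructions for explicit embeddings} together with \cite[Remark 3.6(1)]{Auxiliary free constructions for explicit embeddings} formalises for the setting at hand.

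The only nontrivial hypothesis to feed into that principle is the intersection identity $\langle b,c\rangle\cap P_2 = B_2$, and this has already been established in Corollary~\ref{CO B2 is bening}. Since $g,h,k$ fix $P_2$ pointwise in $\mathscr{G}$, their restricted action on $\langle b,c\rangle$ fixes precisely $\langle b,c\rangle\cap P_2 = B_2$. Therefore the restricted structure on $\langle b,c,g,h,k\rangle$ is exactly the HNN extension
$$
\langle b,c\rangle *_{B_2}(g,h,k),
$$
which is $\Gamma$ by definition in \eqref{EQ second Gamma}.

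The argument is essentially bookkeeping once the cited lemma is invoked; the only point requiring care is to verify that no unintended relations among the conjugates of elements of $\langle b,c\rangle$ by words in $g,h,k$ are forced in $\mathscr{G}$, i.e.\ that the Britton-style normal form in the restricted HNN extension really is a normal form inside $\mathscr{G}$. This is exactly what the intersection condition $\langle b,c\rangle\cap P_2 = B_2$ guarantees: any pinch that could occur in the ambient $\mathscr{G}$ already sits inside $\langle b,c\rangle$ and so corresponds to a pinch of $\Gamma$, showing the two groups coincide. Hence the expected obstacle is not in the logic but in confirming that the hypotheses of the cited subgroup lemma are met, and Corollary~\ref{CO B2 is bening} delivers exactly that.
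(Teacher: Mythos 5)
Your proposal is correct and follows exactly the same route as the paper: you invoke the intersection identity $\langle b,c\rangle\cap P_2 = B_2$ from Corollary~\ref{CO B2 is bening} and then apply \cite[Corollary~3.5\;(1)]{Auxiliary free constructions for explicit embeddings} together with \cite[Remark~3.6\;(1)]{Auxiliary free constructions for explicit embeddings} to conclude $\langle b,c,g,h,k\rangle = \langle b,c\rangle *_{B_2}(g,h,k) = \Gamma$ inside $\mathscr{G}$. The extra commentary you add about Britton-style normal forms is a sound unpacking of why the cited subgroup principle works, but it is not a different argument.
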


\subsection{Construction of $\Delta$}
\label{SU Construction of Delta}

The free group $\langle b,c \rangle$ contains a free subgroup $\langle b_i \mathrel{|} i\in \Z\rangle$  of infinite rank, which decomposes into a free product $B_2 * \,\bar B_{2}$ with
$B_2$ mentioned above and with 
and 
$\bar B_{2}=\langle b_0, b_{1}\rangle$.  
In $\Gamma$ pick the subgroup $R=\langle g_f b_f^{-1} \mathrel{|} f\in \mathcal E_2\rangle$.
The letter $a$ was \textit{not} involved in $\Gamma$ above, and we can consider it as a  new stable letter to build the HNN-extension $\Gamma *_R a$ (soon we will see that in $\Gamma *_R a$ three elements $a,b,c$ are free generators for $\langle a,b,c\rangle$, and so we have no conflict with the earlier usage of $F=\langle a,b,c\rangle$ as a free group).

The intersection $\langle b,c \rangle \cap R$ is trivial because the non-trivial words of type $g_f b_f^{-1}$ generate $R$ freely, and so any non-trivial word they generate must involve at least one stable letter $g$, and hence it needs to be outside $\langle b,c \rangle$.
Therefore by \cite[Corollary 3.5\;(1)]{Auxiliary free constructions for explicit embeddings} the subgroup generated in $\Gamma *_R a$ by $\langle b,c \rangle$ together with  $a$ is equal to $\langle b,c \rangle *_{\langle b,c \rangle \,\cap \,R} a = \langle b,c \rangle *_{\{1\}} a
= \langle b,c \rangle *  a=\langle a,b,c \rangle=F$.
Since $a,b,c$ are free generators, the map sending $a,b,c$ to $a,b^{c^2}\!\!\!\!,\;c$ can be continued to an isomorphism  $\rho$ on $F$.  Identifying $\rho$ to a  stable letter $r$ we arrive to:
\begin{equation}
\label{EQ nested form}
\Delta = \big(\Gamma *_R a \big) *_\rho r
= \Big(\big(\langle b,c \rangle *_{B_2} (g,h,k)\big) *_R a \Big) *_\rho r.
\end{equation}

\subsection{Obtaining $F\cap F_{\B}= A_{\omega_2 \B}$ in $\Delta$}
\label{SU Obtaining G cup} 

For $F_{\B}=\langle g_f\!,\, a,\, r \mathrel{|} f\in \B\rangle$ in $\Delta$ it is easy to see that:
\begin{equation}
\label{EQ intersection}
F\cap F_{\B}= A_{\omega_2 \B}
\end{equation}
(check the part ``\textit{$\BB$ is closed under  $\omega_m$}\!''\, in the proof of Theorem 4.4 of \cite{A modified proof for Higman}).
The equality \eqref{EQ intersection} does not \textit{yet} mean that $A_{\omega_2 \B}$ is benign because $\Delta$
in \eqref{EQ nested form} is \textit{not} necessarily finitely presented. Our nearest objective is to replace $\Delta$ by a  finitely presented alternative $\mathscr{D}$, inside which the equality \eqref{EQ intersection} still holds.

\subsection{Presenting $R$ as a join of benign subgroups}
\label{SU Presenting as a join} 

Let us brake down the subgroup $R$ 
of $\Gamma$ (and hence of $\mathscr{G}$, see Lemma~\ref{LE about alternative Gamma+}) 
to a join of three subgroups, each benign in  $\mathscr{G}$.  
Denote $\Phi=\langle b_0,b_{1}, g, h_0,h_{1} \rangle$, and notice that:

\begin{Lemma}
\label{LE new free subgroup}
$\Phi$ is freely generated by  $5$ elements $b_0,b_{1}, g, h_0,\,h_{1}$ in $\Gamma$, and hence in $\mathscr{G}$.
\end{Lemma}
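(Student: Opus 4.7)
My plan is to apply Britton's lemma to the HNN-extension $\Gamma = \langle b,c \rangle *_{B_2}(g,h,k)$, using the free product structure of the $b_i$'s inside $\langle b,c\rangle$ that was established in point 5.3 via Corollary~\ref{CO B2 is bening}. Since $\Gamma$ embeds into $\mathscr{G}$ by Lemma~\ref{LE about alternative Gamma+}, the statement in $\mathscr{G}$ will follow at once.

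First I would record the crucial consequence of the decomposition $\langle b_i \mathrel{|} i\in \Z\rangle = B_2 * \bar B_2$: the subgroup $\bar B_2 = \langle b_0, b_1\rangle$ is free of rank $2$, and more importantly $\bar B_2 \cap B_2 = \{1\}$. Next I would take an arbitrary freely reduced non-empty word $W$ in the alphabet $\{b_0^{\pm 1}, b_1^{\pm 1}, g^{\pm 1}, h_0^{\pm 1}, h_1^{\pm 1}\}$ and split it into alternating maximal blocks of $b$-letters and of stable-letter symbols:
$$
W = w_0 \, s_1^{\epsilon_1} \, w_1 \, s_2^{\epsilon_2} \cdots s_n^{\epsilon_n}\, w_n,
$$
where each $w_i \in \bar B_2$ (possibly trivial) and each $s_j$ is one of the stable-letter generators. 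If $n=0$ the word $w_0$ is already a non-trivial element of the free group $\bar B_2 \leq \Gamma$, so $W \neq 1$.

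For $n \geq 1$ I would verify that the expression above, read inside $\Gamma$, is already in Britton normal form. A Britton pinch would be a sub-configuration of the form $s^{\epsilon} w_j s^{-\epsilon}$ with the bridge $w_j$ lying in the associated subgroup $B_2$. Because $w_j \in \bar B_2$ and $\bar B_2 \cap B_2 = \{1\}$, any pinch forces $w_j = 1$; but then the two flanking stable letters $s_j^{\epsilon_j}$ and $s_{j+1}^{\epsilon_{j+1}} = s_j^{-\epsilon_j}$ become adjacent in $W$ and cancel, contradicting the free reducedness of $W$. Hence no pinch exists and $W \neq 1$ in $\Gamma$.

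The main obstacle will be controlling the ``bridge'' pieces when $h_0, h_1$ are interpreted as conjugates carrying additional $c$-factors: in that case the candidate bridge between two consecutive occurrences of the same stable letter has the shape $c^{a}\, w_j\, c^{-b}$ rather than $w_j$. The key observation is that every element of $\langle b_i \mathrel{|} i \in \Z\rangle$ has total $c$-exponent zero, so any bridge with a nonzero net $c$-exponent automatically escapes $B_2$; the residual zero-exponent case then reduces back to a $\bar B_2 \cap B_2$ question, which is trivial. With all bridge cases killed, the word $W$ is Britton-reduced and hence non-trivial, proving that the five elements freely generate $\Phi$.
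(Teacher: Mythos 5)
There is a genuine gap: $h_1 = h^{k} = k^{-1}hk$ is \emph{not} a stable letter of $\Gamma = \langle b,c\rangle *_{B_2}(g,h,k)$, so the alternating decomposition $W = w_0\, s_1^{\epsilon_1}\, w_1 \cdots s_n^{\epsilon_n}\, w_n$ with $s_j \in \{g, h_0, h_1\}$ is not a Britton form for $\Gamma$, and the pinch criterion you invoke does not apply to it as written. Once each $h_1^{\pm 1}$ is expanded the syllable structure changes entirely: for instance $h_1 h_0^{-1}$ is, in $\Gamma$, the length-$4$ Britton word $k^{-1}\cdot h \cdot k \cdot h^{-1}$, not the length-$2$ word your scheme records, and your argument never checks whether the hidden $k$-letters create pinches after such expansions. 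Your remark about the ``$c$-factors'' carried by $h_0, h_1$ also points in the wrong direction: the conjugating factors inside $h_i = h^{k^i}$ are $k$'s, while $c$'s occur only in the $b_i = b^{c^i}$, hence entirely within the base group $\langle b,c\rangle$, where they cause no difficulty.

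The paper avoids exactly this by separating two levels of reasoning. It first invokes the cited auxiliary corollaries --- which encode the Britton normal form of $\Gamma$ with respect to the genuine stable letters $g, h, k$ --- to identify $\langle b_0, b_1, g, h, k\rangle$ with the free product $\bar B_2 * \langle g,h,k\rangle$; your observation $\bar B_2 \cap B_2 = \{1\}$ is precisely what makes this step work. After that, the question of whether $g$, $h_0 = h$, $h_1 = h^k$ freely generate a rank-$3$ subgroup lives purely inside the free group $\langle g,h,k\rangle$, where it is elementary, and the rank-$5$ conclusion follows from the standard fact that a free product of a rank-$2$ subgroup of one factor with a rank-$3$ subgroup of the other is free of rank $5$. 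To repair a direct Britton argument you would have to run it with respect to the actual stable letters $g,h,k$, and you would then be forced to prove a version of that rank-$3$ free-generation claim along the way.
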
 

\begin{proof}  
Since $\bar B_{2}=\langle b_0, b_{1}\rangle$ has trivial intersection with $B_2$, we in $\Gamma$, and hence in $\mathscr{G}$ by \cite[Corollary 3.5\;(1)]{Auxiliary free constructions for explicit embeddings} and by \cite[Corollary 3.6]{Auxiliary free constructions for explicit embeddings} have:
$
\langle b_0,b_{1},\; g,h,k\rangle
=\bar B_{2} *_{\bar B_{2}   \;\cap\;   B_2} (g,h,k) = \bar B_{2} *_{{\1}} (g,h,k)
=\bar B_{2} * \langle g,h,k \rangle
$,
and the latter simply is a free group.
Since $g, h_0,h_{1}$ generate a free subgroup inside $\langle g,h,k \rangle$, they together with $b_0,b_{1}$ generate a free subgroup (of rank $2\!+\!1\!+\!2\!=\!5$)
inside $\langle b_0,b_{1},\; g,h,k\rangle$.
\end{proof}

\medskip

For any $f\!=\!(j_0, j_{1})\in \mathcal E_2$ following the notation in \ref{SU Integer functions f} put $f^+=(j_0,\, j_{1}\!+\!1)$. In this notation $\Gamma$ contains the element 
$g_{f^+}^{\vphantom{1}} \!
\cdot 
b_{1}^{-1} 
\cdot 
g_f^{-1}$, such as, 
$g^{h_0^{2} h_1^{\boldsymbol 6} }
\cdot
b_{1}^{-1} 
\cdot
g^{-\,h_0^{2} h_1^{5}  }$  
for $f=(2,5)$.
In a similar way, for any $f=(j_0)\in \mathcal E_1$ denote $f^+=(j_0+1)$, i.e., we just add $1$ to the only coordinate $f(0)$ of $f$. Then $\Gamma$ contains the element 
$g_{f^+}^{\vphantom{1}} \!
\cdot 
b_{0}^{-1} 
\cdot 
g_f^{-1}$, such as, 
$g^{h_0^{\boldsymbol 3} }
\cdot
b_{0}^{-1} 
\cdot
g^{-\,h_0^{2} }$  
for the sequence $f=(2)$ of length $1$.
Define two subgroups:
$$
V_{\mathcal E_2}
=\Big\langle 
g_{f^+}^{\vphantom{1}} \!
\cdot 
b_{1}^{-1} 
\cdot 
g_f^{-1} \mathrel{|} f\in \mathcal E_2
\Big\rangle
\quad {\rm and} \quad
V_{\mathcal E_1}
=\Big\langle 
g_{f^+}^{\vphantom{1}} \!
\cdot 
b_{0}^{-1} 
\cdot 
g_f^{-1} \mathrel{|} f\in \mathcal E_1
\Big\rangle
$$ 
in $\Gamma$, and hence in $\mathscr{G}$, to establish a property for them:
 
\begin{Lemma}
\label{LE VEm is bening}
Each of  
$V_{\mathcal E_2}$ and $V_{\mathcal E_1}$ is benign subgroups in $\mathscr{G}$ for the some explicitly given finitely presented group and its finitely generated subgroup.
\end{Lemma}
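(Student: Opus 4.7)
The strategy is to exhibit each of $V_{\mathcal E_2}$ and $V_{\mathcal E_1}$ as the intersection of $\mathscr{G}$ with a finitely generated subgroup of a finitely presented HNN-extension of $\mathscr{G}$, built from shift automorphisms of the free group $\Phi$ (cf.\ Lemma~\ref{LE new free subgroup}). The freeness of $\Phi$ on $b_0, b_1, g, h_0, h_1$ guarantees that the relevant maps genuinely extend to automorphisms.

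Define $\sigma_i \in \Aut{\Phi}$ (for $i=0,1$) by $\sigma_i(g)=g^{h_i}$ and $\sigma_i$ fixing $b_0,b_1,h_0,h_1$; the image is clearly $\Phi$, so each $\sigma_i$ is an automorphism. For $V_{\mathcal E_1}$, set up the HNN-extension $\mathscr{H}_1=\mathscr{G}*_{\sigma_0} s_0$ and base element $v'_{(0)}=g^{h_0}b_0^{-1}g^{-1}$. A direct computation gives $v'_{(0)}{}^{\,s_0^n}=\sigma_0^n(v'_{(0)})=g^{h_0^{n+1}}b_0^{-1}g^{-h_0^n}=v'_{(n)}$, so $\langle v'_{(0)},s_0\rangle$ contains every generator of $V_{\mathcal E_1}$. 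The conjugates-collecting process of point~\ref{SU The conjugates collecting process} writes any element of $\langle v'_{(0)},s_0\rangle$ as $(\prod v'_{(0)}{}^{\,\pm s_0^{k_i}})\cdot s_0^m$; since $\langle s_0\rangle\cap\mathscr{G}=1$ in the HNN, membership in $\mathscr{G}$ forces $m=0$, giving exactly $V_{\mathcal E_1}$. Hence $V_{\mathcal E_1}$ is benign with the finitely presented overgroup $\mathscr{H}_1$ and the finitely generated subgroup $L_{\mathcal E_1}=\langle v'_{(0)},s_0\rangle$.

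For $V_{\mathcal E_2}$ the analogous base element is $v_{(0,0)}=g^{h_1}b_1^{-1}g^{-1}$, and a computation shows
$\sigma_0^{j_0}\sigma_1^{j_1}(v_{(0,0)})=v_{(j_0,j_1)}$
(apply $\sigma_1^{j_1}$ first, noting that $\sigma_0,\sigma_1$ fix $b_1,h_0,h_1$). Hence in an HNN-extension that realises both $\sigma_0$ and $\sigma_1$ as stable letters $s_0,s_1$, the subgroup $\langle v_{(0,0)},s_0,s_1\rangle$ contains all $v_f$. After establishing the intersection equality via conjugates-collecting—again exploiting that $\langle s_0,s_1\rangle\cap\mathscr{G}=1$ in the HNN-normal-form sense—we would obtain benignness with a two-stable-letter HNN overgroup and a $3$-generator subgroup.

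The main obstacle is that $\sigma_0$ and $\sigma_1$ do \emph{not} commute on $\Phi$ (indeed $g^{h_0h_1}\neq g^{h_1h_0}$ in the free group), so a naive HNN-extension $\mathscr{G}*_{\sigma_0,\sigma_1}(s_0,s_1)$ with free stable letters could drag ``cross-order'' conjugates such as $v_{(0,0)}^{s_0 s_1}=g^{h_1h_0h_1}b_1^{-1}g^{-h_1h_0}$ into the intersection with $\mathscr{G}$, even though these are \emph{not} elements of $V_{\mathcal E_2}$. To resolve this I would replace the single joint HNN by a staged $\bigast$-construction in the spirit of point~\ref{SU The *-construction}: first make each slice $V_{(j_0,*)}=\langle v_{(j_0,j_1)}\mid j_1\in\Z\rangle=\sigma_0^{j_0}(V_{(0,*)})$ benign via the $s_1$-HNN applied to $\mathscr{G}$, then attach the $\sigma_0$-shift in a separate factor amalgamated so that the resulting finitely generated subgroup meets $\mathscr{G}$ precisely in the join of all slices, i.e.\ in $V_{\mathcal E_2}$. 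Making this amalgamation tight—so that the intersection is no bigger than $V_{\mathcal E_2}$—is the real work and is where the earlier machinery on benign joins (Corollary~\ref{CO intersection and join are benign multi-dimensional}) and the $\bigast$-lemmas of Section~\ref{SU The *-construction} will be applied.
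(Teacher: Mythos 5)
Your argument for $V_{\mathcal E_1}$ is essentially the paper's: a single stable letter $s_0$ realising $g\mapsto g^{h_0}$ (the paper calls it $l_{0,0}$ for the map $\lambda_{0,0}$), with the conjugates-collecting process showing $\mathscr{G}\cap\langle g^{h_0}b_0^{-1}g^{-1},s_0\rangle=V_{\mathcal E_1}$. That half is fine.

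For $V_{\mathcal E_2}$ there is a genuine gap. You correctly diagnose that the naive shifts $\sigma_0\colon g\mapsto g^{h_0}$ and $\sigma_1\colon g\mapsto g^{h_1}$ (both fixing $h_0,h_1$) do not commute, and that $v_{(0,0)}^{s_0 s_1}=g^{h_1 h_0 h_1}b_1^{-1}g^{-h_1 h_0}\notin V_{\mathcal E_2}$. But you then only sketch a staged $\bigast$-fix without carrying it out, and that is exactly where the real work lies; your proof is not complete. The paper avoids the issue entirely by choosing the second isomorphism with a twist: $\lambda_{1,1}$ sends $g\mapsto g^{h_1}$ \emph{and} $h_0\mapsto h_0^{h_1}$ (both $\lambda_{1,0}$ and $\lambda_{1,1}$ still fix $b_1,h_1$). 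With this choice one checks directly that $\lambda_{1,0}\lambda_{1,1}=\lambda_{1,1}\lambda_{1,0}$ on the free group $\langle b_1,g,h_0,h_1\rangle$: on $g$ both composites give $g^{h_0 h_1}$, on $h_0$ both give $h_0^{h_1}$. Because the isomorphisms commute, conjugating the seed element $g^{h_1}b_1^{-1}g^{-1}$ by any word in the two stable letters depends only on the exponent sums and lands squarely on an element $g_{f^+}b_1^{-1}g_f^{-1}\in V_{\mathcal E_2}$; the cross-order conjugates you were worried about simply cannot escape $V_{\mathcal E_2}$. So the single HNN-extension $\Lambda_2=\mathscr{G}*_{\lambda_{1,0},\lambda_{1,1}}(l_{1,0},l_{1,1})$ with $L_2=\langle g^{h_1}b_1^{-1}g^{-1},l_{1,0},l_{1,1}\rangle$ works with no $\bigast$-machinery needed. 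To repair your proof you should replace your $\sigma_1$ by the paper's $\lambda_{1,1}$ (i.e.\ add $h_0\mapsto h_0^{h_1}$) and verify the commutation, after which the conjugates-collecting argument you already outlined applies verbatim.
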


\begin{proof}  
By Lemma~\ref{LE new free subgroup} 
the elements $b_{1}, g, h_0, h_{1}$ are \textit{free} generators for the $4$-generator subgroup
$\langle b_{1}, g, h_0, h_{1} \rangle$
in $\Phi$. Hence any of the following two maps can be continued to an isomorphism on the free group $\langle b_{1}, g, h_0, h_{1} \rangle$:
$$
\lambda_{1,0} 
\quad{\rm sends}\quad
b_{1}, g, h_0, h_{1}
\quad{\rm to}\quad
b_{1}, g^{h_0}, h_0, h_{1},\;\,
$$
$$
\lambda_{1,1} 
\quad{\rm sends}\quad
b_{1}, g, h_0, h_{1}
\quad{\rm to}\quad
b_{1}, g^{h_1}, h_0^{h_1}, h_{1}.
$$
Similarly, the elements $b_{0}, g, h_0$ are \textit{free} generators for the $3$-generator subgroup
$\langle b_{0}, g, h_0 \rangle$
of $\Phi_2$, and the following can be continued to an isomorphism on $\langle b_{0}, g, h_0 \rangle$:
$$
\lambda_{0,0} 
\quad{\rm sends}\quad
b_{0}, g, h_0
\quad{\rm to}\quad
b_{0}, g^{h_0}, h_0\,.
$$
For these isomorphisms pick three stable letters 
$l_{1,0},\, l_{1,1}, \,l_{0,0}$ to respectively construct the following two finitely presented HNN-extensions:
$$
\Lambda_2=
\mathscr{G}
*_{\,\lambda_{1, 0}\,,\,\lambda_{1, 1}} (l_{1,0},\; l_{1,1})
\quad {\rm and} \quad
\Lambda_1=
\mathscr{G}
*_{\,\lambda_{0,\; 0}} l_{0,0}\;.
$$
Conjugation of elements  $g_{f^+}^{\vphantom{1}} \!
\cdot 
b_{1}^{-1}  
\cdot 
g_f^{-1}\in \Lambda_2$ by  $l_{1,0}$ and $l_{1,1}$ is easy to understand: 
$l_{1,0}$ just adds $1$  to the coordinate $f(0)$ of $f$,
and $l_{1, 1}$ just adds $1$ to the coordinate $f(1)$ of $f$.
For instance, for $f=(2,5)$ and for $l_{1,1}$ we from the above definition of $\lambda_{1,1}$ easily get: 
\begin{equation}
\label{EQ example with l CASE 2}
\begin{aligned}
\left(
g_{f^+}^{\vphantom{1}} \!
\cdot 
b_{1}^{-1} 
\cdot 
g_f^{-1}
\right)^{l_{1,1}}
\!\!
&=\left(g^{h_{1}}\right)^{
\left(h_0^{h_{1}}\right)^{2}
h_1^{6} }
\cdot
b_{1}^{-1} 
\! \cdot
\left(g^{h_{1}}\right)^{-\,
\left(h_0^{h_{1}}\right)^{2}
h_1^{5}
} \! =  g^{h_1 \cdot\, h_1^{-1}  h_0^2 h_1 
h_1^{6} }  
\cdot
b_{1}^{-1} 
\! \cdot
g^{-h_1 \cdot\, h_1^{-1}  h_0^2 h_1 
h_1^{5} }\\[-1mm]
& = g^{h_0^{2} h_1^{\boldsymbol{7}} }
\cdot
b_{1}^{-1} 
\cdot
g^{-\,h_0^{2} h_1^{\boldsymbol{6}}  }
=g_{f'^{\;+}}^{\vphantom{1}} \!
\cdot 
b_{1}^{-1} 
\! \cdot 
g_{f'}^{-1}
\in V_{\mathcal E_2}
\end{aligned}
\end{equation}
with $f'=(2,\,5\!+\!1)=(2,\boldsymbol{6})\in \mathcal E_2$.
Similarly verify conjugation of $g_{f^+}^{\vphantom{1}} \!
\cdot 
b_{1}^{-1}  
\cdot 
g_f^{-1}\in \Lambda_2$ by $l_{1,0}$.

And it is even simpler to see that conjugation of  $g_{f^+}^{\vphantom{1}} \!
\cdot 
b_{0}^{-1}  
\cdot 
g_f^{-1}\in \Lambda_1$ by the stable letter $l_{0,0}$ just adds $1$  to the only coordinate $f(0)$ of $f$.
Say, when $f=(2)$ we trivially have:
\begin{equation}
\label{EQ example with l CASE 1}
\begin{aligned}
\left(
g_{f^+}^{\vphantom{1}} \!
\cdot 
b_{0}^{-1} 
\cdot 
g_f^{-1}
\right)^{l_{0,0}}
&=\left(g^{h_{0}}\right)^{
h_0^{3} }
\cdot
b_{0}^{-1} 
\! \cdot
\left(g^{h_{0}}\right)^{-\,
h_0^{2} 
} 
=  g^{h_0 \cdot\,  h_0^3  }  
\cdot
b_{0}^{-1} 
\! \cdot
g^{-h_0 \cdot\,  h_0^2  }\\[-1mm]
& = g^{h_0^{\boldsymbol{4}} }
\cdot
b_{0}^{-1} 
\cdot
g^{-\,h_0^{\boldsymbol{3}}  }
=g_{f'^{\;+}}^{\vphantom{1}} \!
\cdot 
b_{0}^{-1} 
\! \cdot 
g_{f'}^{-1}
\in V_{\mathcal E_1}
\end{aligned}
\end{equation}
with $f'=(2\!+\!1)=(\boldsymbol{3})\in \mathcal E_1$.

We see that conjugations by the letters $l_{1,0}$ and $l_{1,1}$ keep the elements from $V_{\mathcal E_2}$ inside $V_{\mathcal E_2}$, and 
conjugation by the letter $l_{0,0}$  keeps the elements from $V_{\mathcal E_1}$ inside $V_{\mathcal E_1}$.

\medskip

We in point~\ref{SU Integer functions f} agreed that for our purposes we may append zero members to any sequence $f$, and this does not change the respective elements $b_f, a_f, h_f, g_f$. Hence, interpret the zero sequence as $f_{0}=(0,0)\in \E_2$, and rewrite the product $g^{h_{1}} 
\!
\cdot 
b_{1}^{-1} 
\cdot 
g^{-1}$ as
$g_{f_0^+}^{\vphantom{1}} \!
\cdot 
b_{1}^{-1} 
\cdot 
g_{f_0}^{-1}
$ for this sequence $f_{0}=(0,0)$.
Applying the conjugate collection process of point~\ref{SU The conjugates collecting process} for $\X=\{
g^{h_{1}} 
\!
\cdot 
b_{1}^{-1} 
\cdot 
g^{-1}\}
$ and for 
$\Y=\{l_{1, 0},l_{1,1}\}$
we see that any element $w$ from $\langle \X
, \Y \rangle$ inside $\Lambda_2$ is a product of elements of $g_{f^+}^{\vphantom{1}} \!
\cdot 
b_{1}^{-1} 
\cdot  
g_f^{-1}$ (for certain sequences $f\in \mathcal E_2$) and of certain powers of the stable letters $l_{1, 0},l_{1, 1}$. And $w$ is inside $\mathscr{G}$ if and only if all those powers are cancelled out in the normal form, and $w$ in fact is in $V_{\mathcal E_2}$, that is, denoting $L_2 = \langle
g^{h_{1}} 
\!
\cdot 
b_{1}^{-1} 
\cdot 
g^{-1}\!\!,\;\; l_{1,0},\;l_{1,1}
\rangle$ we have $\mathscr{G}\, \cap \, L_2
\subseteq V_{\mathcal E_2}$.

On the other hand, for \textit{any} $f \in \E_2$ it is very easy to obtain $g_{f^+}^{\vphantom{1}} \!
\cdot 
b_{1}^{-1}  
\cdot 
g_f^{-1}$ via conjugations of  $g^{h_{1}} 
\!
\cdot 
b_{1}^{-1} 
\cdot 
g^{-1}$ by $l_{1,0}$ and $l_{1,1}$, see \eqref{EQ example with l CASE 2} and \eqref{EQ example with l CASE 1}. For instance, for $f=(2,5)$ we have:
$$
g_{f^+}^{\vphantom{1}} \!
\cdot 
b_{1}^{-1}  
\cdot 
g_f^{-1} = 
\left(
g^{h_{1}} 
\!
\cdot 
b_{1}^{-1} 
\cdot 
g^{-1}
\right)^{l_{1,0}^2 \, \cdot \;l_{1,1}^5}\!.
$$
Therefore, $
\mathscr{G}\, \cap \, L_2
= V_{\mathcal E_2},
$
i.e., $V_{\mathcal E_2}$ is benign in $\mathscr{G}$ for the above finitely presented group $K=\Lambda_2$ and for its $3$-generator subgroup $L=L_2$.

\medskip

Next interpret the zero sequence as $f_{0}=(0)\in \E_1$, and write 
$g_{f_0^+}^{\vphantom{1}} \!
\cdot 
b_{0}^{-1} 
\cdot 
g_{f_0}^{-1}
= 
g^{h_{0}} 
\!
\cdot 
b_{0}^{-1} 
\cdot 
g^{-1}
$\!\!.\;
By the conjugate collection process of point~\ref{SU The conjugates collecting process} for $\X=\{
g^{h_{0}} 
\!
\cdot 
b_{0}^{-1} 
\cdot 
g^{-1}\}
$ and for 
$\Y=\{l_{0, 0}\}$
we see that any element $w \in \langle \X
, \Y \rangle \le \Lambda_1$ is a product of elements of $g_{f^+}^{\vphantom{1}} \!
\cdot 
b_{0}^{-1} 
\cdot 
g_f^{-1}$ (for certain $f\in \mathcal E_1$) and of certain powers of the stable letter $l_{0, 0}$. So $w$ is inside $\mathscr{G}$ if and only if all those powers are cancelled out, and $w$ actually is in $V_{\mathcal E_1}$, that is, denoting $L_1 = \langle
g^{h_{0}} 
\!
\cdot 
b_{0}^{-1} 
\cdot 
g^{-1}\!\!,\;\; l_{0, 0}
\rangle$ we have:
$
\mathscr{G}\, \cap \, L_1
\subseteq V_{\mathcal E_1}.
$
On the other hand, for \textit{any} $f \in \E_1$ it is very easy to obtain $g_{f^+}^{\vphantom{1}} \!
\cdot 
b_{0}^{-1}  
\cdot 
g_f^{-1}$ via conjugations of  $g^{h_{0}} 
\!
\cdot 
b_{0}^{-1} 
\cdot 
g^{-1}$ by $l_{0,0}$. 
Hence, $\mathscr{G}\, \cap \, L_1 = V_{\mathcal E_1}$, i.e., $V_{\mathcal E_1}$ is benign in $\mathscr{G}$ for the above finitely presented group $K=\Lambda_1$ and for its $2$-generator subgroup $L=L_1$.
\end{proof}

In addition to the groups set in the above proof define the auxiliary groups  
$V_{\mathcal E_0}=L_0=\langle g \rangle$ and $\Lambda_0=\mathscr{G}$. Since this $V_{\mathcal E_0}$ already is finitely generated, it trivially is benign in finitely generated $\mathscr{G}$ for the stated finitely presented $\Lambda_0$ and for the finitely generated $L_0$.

\medskip
The collected information outputs:

\begin{Lemma}
\label{LE represent L}
$R=\langle g_f b_f^{-1} \mathrel{|} f\in \mathcal E_2\rangle$  is a benign subgroup in $\mathscr{G}$ for  some explicitly given finitely presented group and its finitely generated subgroup.
\end{Lemma}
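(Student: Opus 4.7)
The plan is to present $R$ as the join of three subgroups $V_{\mathcal{E}_0}$, $V_{\mathcal{E}_1}$, $V_{\mathcal{E}_2}$, each of which we already know to be benign in $\mathscr{G}$ with explicit finitely presented overgroups and finitely generated subgroups (the cases $i=1,2$ were handled in Lemma~\ref{LE VEm is bening}, and the case $i=0$ is trivial because $V_{\mathcal{E}_0}=\langle g\rangle$ is finitely generated inside the finitely presented $\mathscr{G}$, making it benign by Remark~\ref{RE finite generated is benign}). Once the equality $R=\langle V_{\mathcal{E}_0},V_{\mathcal{E}_1},V_{\mathcal{E}_2}\rangle$ is verified, Corollary~\ref{CO intersection and join are benign multi-dimensional}\;\eqref{PO 2 CO intersection and join are benign multi-dimensional} gives the benignity of $R$ in $\mathscr{G}$ together with an explicit finitely presented overgroup $K_R$ and finitely generated subgroup $L_R$, namely the $\bigast$-construction over $\mathscr{G}$ with the three associated stable letters and the subgroup generated by $\mathscr{G}$-conjugates under those stable letters.

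For the inclusion $R\supseteq\langle V_{\mathcal{E}_0},V_{\mathcal{E}_1},V_{\mathcal{E}_2}\rangle$ I would just observe that each generator of $V_{\mathcal{E}_i}$ is visibly a product of two generators $g_{f^+}b_{f^+}^{-1}$ and $(g_f b_f^{-1})^{-1}$ of $R$ (using $b_{f^+}=b_j b_f$ with $j=0$ or $j=1$). For the reverse inclusion, I would proceed by induction on $|f(0)|+|f(1)|$: starting from the base case $g_{(0,0)}b_{(0,0)}^{-1}=g\in V_{\mathcal{E}_0}$, the telescoping identities
\begin{equation*}
\bigl(g_{f^+}b_1^{-1}g_f^{-1}\bigr)\cdot\bigl(g_f b_f^{-1}\bigr)\;=\;g_{f^+}b_{f^+}^{-1},\qquad \bigl(g_{f^+}b_0^{-1}g_f^{-1}\bigr)\cdot\bigl(g_f b_f^{-1}\bigr)\;=\;g_{f^+}b_{f^+}^{-1}
\end{equation*}
(the first for $f\in\mathcal{E}_2$ with $f^+$ shifting $f(1)$, the second viewing $f\in\mathcal{E}_1$ as a sequence of length one) show that successive multiplications by elements of $V_{\mathcal{E}_2}$ and of $V_{\mathcal{E}_1}$ produce every $g_f b_f^{-1}$ with $f\in\mathcal{E}_2$. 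Inverses of these moves handle negative increments, so every generator of $R$ lies in the join.

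Concretely, for the final assembly one takes the finitely presented overgroups $\Lambda_0=\mathscr{G}$, $\Lambda_1$, $\Lambda_2$ of $\mathscr{G}$ together with their finitely generated distinguished subgroups $L_0,L_1,L_2$ produced in Lemma~\ref{LE VEm is bening}, and forms the $\bigast$-construction $\bigast_{i=0}^{2}(\Lambda_i,L_i,s_i)_{\mathscr{G}}$ with three fresh stable letters $s_0,s_1,s_2$. By Lemma~\ref{LE intersection in bigger group multi-dimensional} this contains $\mathscr{G}*_{L_0,L_1,L_2}(s_0,s_1,s_2)$, and then Lemma~\ref{LE join in HNN extension multi-dimensional} gives
\begin{equation*}
\mathscr{G}\cap\bigl\langle \mathscr{G}^{s_0},\mathscr{G}^{s_1},\mathscr{G}^{s_2}\bigr\rangle\;=\;\langle V_{\mathcal{E}_0},V_{\mathcal{E}_1},V_{\mathcal{E}_2}\rangle\;=\;R,
\end{equation*}
providing the required explicit $K_R$ and its finitely generated subgroup $L_R=\langle \mathscr{G}^{s_0},\mathscr{G}^{s_1},\mathscr{G}^{s_2}\rangle$.

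The main point requiring care is the induction step establishing $R\subseteq\langle V_{\mathcal{E}_0},V_{\mathcal{E}_1},V_{\mathcal{E}_2}\rangle$: one must carefully coordinate the two ways of incrementing $f$ (shifting $f(0)$ by elements of $V_{\mathcal{E}_1}$, and shifting $f(1)$ by elements of $V_{\mathcal{E}_2}$) so that every $f\in\mathcal{E}_2$ is reached, and one must also respect the bookkeeping convention from point~\ref{SU Integer functions f} that treats $f=(j_0)\in\mathcal{E}_1$ interchangeably with $(j_0,0)\in\mathcal{E}_2$. Everything else is a routine application of the $\bigast$-construction machinery already assembled in Section~\ref{SU The *-construction}.
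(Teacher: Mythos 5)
Your proposal is correct and follows essentially the same route as the paper: express $R$ as the join $\langle V_{\mathcal{E}_0},V_{\mathcal{E}_1},V_{\mathcal{E}_2}\rangle$ (the paper phrases this via the chain $Z_{\mathcal{E}_2}=\langle Z_{\mathcal{E}_1},V_{\mathcal{E}_2}\rangle$, $Z_{\mathcal{E}_1}=\langle Z_{\mathcal{E}_0},V_{\mathcal{E}_1}\rangle$, citing \cite{A modified proof for Higman} for the verification you spell out with the telescoping identities), then feed the three benign pieces into the $\bigast$-construction over $\mathscr{G}$ and invoke Corollary~\ref{CO intersection and join are benign multi-dimensional}\;\eqref{PO 2 CO intersection and join are benign multi-dimensional}. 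One small slip: the parenthetical justification should read $b_{f^+}=b_f\,b_j$ rather than $b_j\,b_f$ (the $b_i$ do not commute), though the displayed telescoping identities themselves are written correctly and are consistent with the right order.
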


\begin{proof}
First show that
$R$
is generated by its three subgroups $V_{\mathcal E_0}, V_{\mathcal E_1}, V_{\mathcal E_2}$.
Denote  
$Z_{\mathcal E_2}=\langle g_f b_f^{-1} \mathrel{|} f\in \mathcal E_2\rangle=R$,\; $Z_{\mathcal E_1}=\langle g_f b_f^{-1} \mathrel{|} f\in \mathcal E_1\rangle$ and $Z_{\mathcal E_0}=\langle g \rangle$.
It is very easy to see that 
$\langle Z_{\E_{1}}, V_{\E_2} \rangle = Z_{\E_2}$ 
and 
$\langle Z_{\E_{0}}, V_{\E_1} \rangle = Z_{\E_1}$,
see details in \cite{A modified proof for Higman} based on an original idea from \cite{Higman Subgroups of fP groups}.
And, thus, 
$
R=
Z_{\E_{2}} 
= \langle Z_{\E_{1}}, V_{\E_{2}} \rangle 
= \langle Z_{\E_{0}}, 
V_{\E_{1}}, V_{\E_{2}} \rangle
= \langle V_{\E_{0}}, 
V_{\E_{1}}, V_{\E_{2}} \rangle.
$ 

Next, by the above each of 
$V_{\mathcal E_i}$, $i=0,1,2$, is  benign in $\mathscr{G}$ for an explicitly given finitely presented group $\Lambda_i$ and its finitely generated subgroup $L_i$.

It remains to load these components into \eqref{EQ nested Theta  multi-dimensional short form}, and to apply Corollary~\ref{CO intersection and join are benign multi-dimensional}\;\eqref{PO 2 CO intersection and join are benign multi-dimensional} to get the following finitely presented overgroup holding $\mathscr{G}$:
\begin{equation}
\label{EQ adapted construction with Theta} 
\mathscr{F}=
\Big((\Lambda_0 *_{L_0} s_0) *_{\mathscr{G}} (\Lambda_1 *_{L_1} s_1) \Big) *_{\mathscr{G}}  (\Lambda_2 *_{L_2} s_2)
=
{\textstyle \bigast_{i=0}^{2}}(\Lambda_i, L_i, s_i)_{\mathscr{G}}.
\end{equation}
and its finitely generated subgroup 
$\mathscr{H}=\big\langle \mathscr{G}^{\,s_0}
, \mathscr{G}^{\,s_1}
, \mathscr{G}^{\,s_2}
\big\rangle$ for which $\mathscr{G} \cap \mathscr{H} = R$ holds.
\end{proof}

\subsection{Construction of finitely presented group for $\omega_2 \B$
}
\label{SU Construction of finitely presented group for omega_2 B}  

Observe that in construction of $\mathscr{F}$ we never used the letter $a\in F$. Hence, in analogy with construction of $\Gamma *_R a$ in  subsection~\ref{SU Construction of Delta}, we can build the HNN-extension $\mathscr{F} *_\mathscr{H} a$ using $a$ as a stable letter fixing $\mathscr{H}$. 
Since $\mathscr{F}$ of \eqref{EQ adapted construction with Theta} is finitely presented, and $\mathscr{H}$ is finitely generated, $\mathscr{F} *_\mathscr{H} a$ is finitely presented. 

Inside $\mathscr{F} *_\mathscr{H} a$ the elements $a,b,c$ generate the same \textit{free} subgroup discussed in \ref{SU Construction of Delta}, and so we can again define an isomorphism $\rho$ sending $a,b,c$ to $a,b^{c^2}\!\!\!\!,\;c$ together with the 
 \textit{finitely presented} analog of $\Delta$ from 
\eqref{EQ nested form}:
\begin{equation}
\label{EQ nested form analog}
\mathscr{D}
= \big(\mathscr{F} *_\mathscr{H} a \big) *_\rho r.
\end{equation}
For any $\B \subseteq \E_2$ we in analogy with subsection~\ref{SU Construction of Delta} introduce $F_{\B}=\langle g_f\!,\, a,\, r \mathrel{|} f\in \B\rangle$ in $\mathscr{D}$. But since $F_{\B}$ is in the subgroup $\Delta$ of $\mathscr{D}$ already, we for $\mathscr{D}$ have the analog of \eqref{EQ intersection}:
\begin{equation}
\label{EQ intersection analog}
F\cap F_{\B}= A_{\omega_2 \B} \text{\; in \;} \mathscr{D}.
\end{equation}
         
As we saw in Section~\ref{SE Construction of KB}, for the particular set ${\mathcal B}=\upsilon(\zeta_{\!1} \Zz , \tau\S)$ the group $A_{\mathcal B} = \langle a_f \mathrel{|} f\in \mathcal B \rangle$ is benign in $F=\langle a, b, c \rangle$.
As a finitely presented overgroup for $F$ we in \ref{SU union zeta Z and tau S is benign} constructed the group $K_\B = \mathscr{B}$ with its finitely generated subgroup
 $L_\B = \langle 
F^{\,v_1}, F^{\,v_2}
\rangle$, with 
$F \cap L_\B = A_{\mathcal B}$.

\medskip
$\mathscr{D}$ was built  via free constructions 
by adjoining to  $\langle 
b,c
\rangle$ some new letters
$t_2, t'_2, t_0, t'_0, r_1, r_2,$ 
$g,h,k,\, l_{0,0},\, l_{1,0},\, l_{1,1},
s_0, s_1, s_2,\, a, r,
$
while in Section~\ref{SE Construction of KB} we built $K_\B = \mathscr{B}$ by adjoining to \textit{the same} $\langle 
b,c
\rangle$ the new letters
$
t_1, t'_1, a, u_1, u_2, d, e, v_1, v_2. 
$
We also noticed that the letters $a,b,c$ freely generate $F$ in both  $\mathscr{B}$ and $\mathscr{D}$.
Hence, we can construct the finitely presented amalgamated product $\mathscr{B} *_F \mathscr{D}$
generated by the $2+17 + 9-1 = 27$  letters: $b,c$ plus those in two lines of letters above (we subtract $1$ as the letter $a$ accrued twice).

\medskip
The subgroup 
$A_{\B}$ is benign also in $\mathscr{D}$. Indeed, the group $\mathscr{B} *_F \mathscr{D}$ is finitely presented, and since $L_\B \cap F = A_\B$
and
$A_\B \cap F = A_\B$, we can apply to the subgroup $L_\B = \langle\, L_\B ,\, A_\B\rangle$ of $\mathscr{B} *_F \mathscr{D}$ 
\cite[Corollary~3.2\;(3)]{Auxiliary free constructions for explicit embeddings}  
to get that  $\mathscr{D} \cap L_\B = A_{\B}$.
Next, being finitely generated $\langle b,c \rangle$ is benign in $\mathscr{D}$ for the finitely presented $\mathscr{D}$ and for the finitely generated $\langle b,c \rangle$, see Remark~\ref{RE finite generated is benign}.
Hence by Corollary~\ref{CO intersection and join are benign multi-dimensional}\;\eqref{PO 2 CO intersection and join are benign multi-dimensional}  
the join $\big\langle A_{\B}, \langle b,c \rangle \big\rangle = F_{\B}$ is benign in $\mathscr{D}$.
As its finitely presented overgroup we may take:
\begin{equation}
\label{EQ definition of L}
\mathscr{L}=
\big((\mathscr{D} *_F \mathscr{B})*_{L_\B} q_1 \big) 
\;*_{\mathscr{D}}\,
\big(\mathscr{D} *_{\langle b,c \rangle} q_2\big),
\end{equation}
and as a finitely generated subgroup we may take $L' = \big\langle \mathscr{D}^{p_1}\!,\; \mathscr{D}^{p_2} \big\rangle$.

$F$ is benign in $\mathscr{D}$ for the finitely presented $\mathscr{D}$ and for the finitely generated $F$.
Hence by
Corollary~\ref{CO intersection and join are benign multi-dimensional}\;\eqref{PO 1 CO intersection and join are benign multi-dimensional} 
the intersection 
$F\cap F_{\B}= A_{\omega_2 \B}$ is benign in $\mathscr{D}$ for the finitely presented:
\begin{equation}
\label{EQ explicite KomegaB}
K_{\omega_2 \B}=(\mathscr{L} *_{L'} p_3) \;*_{\mathscr{D}} \,
(\mathscr{D} *_F p_4),
\end{equation}
generated by $31$ letters:
\begin{equation}
\label{EQ generators of K omega2}
\begin{split}
& b, c, t_2, t'_2, t_0, t'_0, r_1, r_2,
g,h,k,\, l_{0,0},\, l_{1,0},\, l_{1,1},
s_0, s_1, s_2,\, a, r, \\
& \hskip15.5mm t_1, t'_1, u_1, u_2, d,e, v_1, v_2,\;\; q_1, q_2, q_3, q_4
\end{split}
\end{equation}
and for the finitely generated subgroup: 
\begin{equation}
\label{EQ explicite LomegaB}
L_{\omega_2 \B} = \mathscr{D}^{\,p_3 p_4}
\end{equation}
in the above $K_{\omega_2 \B}$,
that is, $\mathscr{D} \cap L_{\omega_2 \B} =A_{\omega_2 \B}$ in $K_{\omega_2 \B}$.
But since $F \le \mathscr{D}$ and $A_{\omega_2 \B} \le F$, we conclude that $F \cap L_{\omega_2 \B} = A_{\omega_2 \B}$ also holds in $K_{\omega_2 \B}$.

\subsection{Writing $K_{\omega_2 \B}$ by generators and defining relations}
\label{SU Writing K_omega B by generators and defining relations}
Using presentation of $\Xi_m$ in point~\ref{SU Writing K_B by generators and defining relations} (for $m=2$ and $m=0$), and definitions of $\mathscr{Z}$ and $\mathscr{G}$ in point~\ref{SU The groups Xi Gamma} we have:
\begin{equation}
\label{EQ relations Z}
\begin{split}
\mathscr{Z}
& = \big\langle
b, c, t_2, t'_2, t_0, t'_0, r_1, r_2 
\mathrel{\;\;|\;\;}
b^{t_2}= b^{c^{-1}}\!\!,\;
b^{t'_2}= b^{c^{-2}}\!\!,\;
b^{t_0}= b^{c}\!,\;
b^{t'_0}= b,\;\\
& \hskip6mm
c^{t_2}=c^{t'_2}=c^{t_0}=c^{t'_0}=c^2;\;\;\;
\text{$r_1$ fixes $b_2, t_2, t'_2$};\;\;\;
\text{$r_2$ fixes $b_{-1}, t_0, t'_0$}
\big\rangle.
\end{split}
\end{equation}

\begin{equation}
\label{EQ relations G}
\begin{split}
\mathscr{G}
& = \big\langle
b, c, t_2, t'_2, t_0, t'_0, r_1, r_2,
g,h,k
\mathrel{\;\;|\;\;}
\text{$14$ relations of $\mathscr{Z}$ from \eqref{EQ relations Z}};\\
& \hskip6mm
\text{each of $g,h,k$ fixes 
$b^{r_1}\!,\; c^{r_1}\!,\; b^{r_2}\!,\; c^{r_2}$}
\big\rangle.
\end{split}
\end{equation}

Using 
$L_0$, $\Lambda_0$,
$L_1$, $\Lambda_1$,
$L_2$, $\Lambda_2$ and 
$\mathscr{F}$
in point~\ref{SU Presenting as a join} (in particular, using  $\mathscr{F}$ in \eqref{EQ adapted construction with Theta}), write:
%
\begin{equation}
\label{EQ relations F}
\begin{split}
\mathscr{F}
& = \big\langle
b, c, t_2, t'_2, t_0, t'_0, r_1, r_2,
g,h,k,\, l_{0,0},\, l_{1,0},\, l_{1,1},
s_0, s_1, s_2
\mathrel{\;\;|\;\;}\\
& \hskip11mm
\text{$14$ relations of $\mathscr{Z}$ from \eqref{EQ relations Z}};\;\;\;
\text{each of $g,h,k$ fixes 
$b^{r_1}\!,\; c^{r_1}\!,\; b^{r_2}\!,\; c^{r_2}$};\\
& \hskip11mm
\text{$l_{0,0}$ sends $
b, g, h$ to $
b, g^{h}\!, h$};\;\;\;
\text{$l_{1,0}$ sends $
b^c\!, g, h, h^k$ to $b^c\!, g^{h}\!, h, h^k$};\\
& \hskip11mm
\text{$l_{1,1}$ sends $
b^c\!,\; g, h, h^k$ to $
b^c\!,\; g^{h^c}\!\!\!,\; h^{h^k}\!\!\!,\; h^k$};\\
& \hskip11mm
\text{$s_0$ fixes $g$};\;\;\;
\text{$s_1$ fixes $g^{h} 
b^{-1} 
g^{-1}\!\!,\; l_{0, 0}$};\;\;\;
\text{$s_2$ fixes $g^{h^k} 
b^{-c} 
g^{-1}\!\!,\;\; l_{1,0},\;\;l_{1,1}$}
\big\rangle.
\end{split}
\end{equation}
$\mathscr{F}$ has $17$ generators and $14+12+11+6=43$ relations.

Using definition of $\mathscr{H}$ in point~\ref{SU Presenting as a join} 
and
definition of $\mathscr{D}$ in point~\ref{SU Construction of finitely presented group for omega_2 B}, we have:
\begin{equation}
\label{EQ relations D}
\begin{split}
\mathscr{D}
& = \big\langle
b, c, t_2, t'_2, t_0, t'_0, r_1, r_2,
g,h,k,\, l_{0,0},\, l_{1,0},\, l_{1,1},
s_0, s_1, s_2,\, a, r
\mathrel{\;\;|\;\;}\\
& \hskip11mm
\text{$43$ relations of $\mathscr{F}$ from \eqref{EQ relations F}};\\
& \hskip11mm
\text{$a$ fixes conjugates of 
$b, c, t_2, t'_2, t_0, t'_0, r_1, r_2,
g,h,k$ by each of $s_0, s_1, s_2$};\\
& \hskip11mm
\text{$r$ sends $a,b,c$ to $a,b^{c^2}\!\!\!\!,\;c$}
\big\rangle.
\end{split}
\end{equation}

\noindent
$\mathscr{D}$ has $17+2=19$ generators and $43 + 11 \cdot 3 + 3 = 79$ relations.

From definition of $\mathscr{L}$ in \eqref{EQ definition of L} and from definition of $L_\B$ in point~\ref{SU Construction of finitely presented group for omega_2 B}:
\begin{equation}
\label{EQ relations L}
\begin{split}
\mathscr{L}
&  = \big\langle
\text{$19$ generators of $\mathscr{D}$ from \eqref{EQ relations D}};\; t_1, t'_1, u_1, u_2, d,e, v_1, v_2,\;\;\, q_1, q_2
\mathrel{\;\;|\;\;}\\
& \hskip11mm
\text{$79$ relations of $\mathscr{D}$ from \eqref{EQ relations D}};\;\;\; 
\text{$24$ relations of $\mathscr{B}$ from \eqref{EQ relations B}};\\
& \hskip11mm
\text{$q_1$ fixes 
$a^{v_1}\!,\, b^{v_1}\!,\, c^{v_1}\!,\, a^{v_2}\!,\, b^{v_2}\!,\, c^{v_2}$;\; 
$q_2$ fixes $b,c$}
\big\rangle.
\end{split}
\end{equation}

$\mathscr{L}$ has $19+10=29$ generators and $79 + 24 + 6+2 = 111$ relations.

Finally, from definition of 
$K_{\omega_2 \B}$ in \eqref{EQ explicite KomegaB} and of $L' = \big\langle \mathscr{D}^{p_1}\!,\; \mathscr{D}^{p_2} \big\rangle$
in point~\ref{SU Construction of finitely presented group for omega_2 B} we have:
\begin{equation}
\label{EQ relations K omega2 B}
\begin{split}
K_{\omega_2 \B}
& = \big\langle
\text{$29$ generators of $\mathscr{L}$ from \eqref{EQ relations L}};\; q_3, q_4
\mathrel{\;\;|\;\;}\\
& \hskip11mm
\text{$111$ relations of $\mathscr{L}$ from \eqref{EQ relations L}};\\
& \hskip11mm
\text{$q_3$ fixes the conjugates of 
$b, c, t_2, t'_2, t_0, t'_0, r_1, r_2,
g,h,k,$}\\
& \hskip11mm 
\text{$l_{0,0},\, l_{1,0},\, l_{1,1},
s_0, s_1, s_2,\, a, r$ by each of $p_1,\, p_2$}; \;\;\;
\text{$q_4$ fixes $a,b,c$}
\big\rangle.
\end{split}
\end{equation}
$K_{\omega_2 \B}$ has $29+2=31$ generators and $111+19\cdot 2 + 3= 152$ relations. As $L_{\omega_2 \B}$ we take: 
$$
L_{\omega_2 \B} = \mathscr{D}^{\,p_3 p_4}=
\big\langle 
b, c, t_2, t'_2, t_0, t'_0, r_1, r_2,
g,h,k,\, l_{0,0},\, l_{1,0},\, l_{1,1},
s_0, s_1, s_2,\, a, r
\big\rangle^{\,p_3 p_4}\!.
$$

\section{Construction of $K_{\sigma^{\,-1} \omega_2 \mathcal B}$ and $K_{\mathcal C}$
}
\label{SE Construction of Ksigma and KC}


\subsection{The ``shifted'' version 
of $\omega_2\mathcal B$}
\label{SU the shifted version} 

Let $\sigma^{-1} \omega_2 \mathcal B$ denote the set of all functions from $\omega_2 \mathcal B$ ``shifted one coordinate to the left'' or, more precisely, $f'\in \sigma^{-1} \omega_2 \mathcal B$ if and only if there is an $f \in \omega_2 \mathcal B$ such that $f'(i)=f(i\!+\!1)$ for all $i\in \Z$.
For example, if $f=(0,5,9,8)\in \omega_2\mathcal B$, then $f'=(5,9,8) \in \sigma^{-1} \omega_2\mathcal B$;
and if $f=(3,2,9,8)\in \omega_2\mathcal B$, then $f'\in \sigma^{-1} \omega_2\mathcal B$ is the function sending the integers $-1, 0, 1, 2$ to $3,2,9,8$ (and all other integers to $0$).

It would not be hard to modify the construction of previous two sections to show that $A_{\sigma^{-1} \omega_2 \mathcal B}$ also is benign in $F$ for ${\mathcal B}=\upsilon(\zeta_{\!1} \Zz , \tau\S)$. However, it will be by far simpler to achieve that aim by 
adding some extra step over the work already done for $K_{\omega_2 \B}$ and $L_{\omega_2 \B}$.

\subsection{The benign image $A_{\sigma^{-1} \omega_2 \mathcal B} $}
\label{SU benign A sigma omega B}

Within this section for shorter notation put
$K=K_{\omega_2 \B}$
and
$L=L_{\omega_2 \B}$ to be the groups found in point~\ref{SU Construction of finitely presented group for omega_2 B}. 
From previous section we know that $F=\langle a,b,c \rangle$ is embeddable into $K$, and $F \cap L = A_{\omega_2\mathcal B}$.

For the set of $31$ letters $b,c;\ldots; p_4$ from \eqref{EQ generators of K omega2} introduce some new $31$ letters $\bar b,\bar c;\ldots; \bar p_4$ letters (bars added on each letter). Then 
construct a copy $\bar K$ in the same manner as $K=K_{\omega_2 \B}$, using the same free constructions on these new letters, i.e., start from $\langle \bar b,\bar c \rangle$, then build its HNN-extension by the stable letters $\bar t_1, \bar t'_1$, where $\bar t_1$
sends $\bar b,\bar c$ to 
$\bar b_0,\bar c^2$, and 
$\bar t'_1$
sends $\bar b,\bar c$ to 
$\bar b_{-1},\bar c^2$, etc...
Clearly, $\bar  K$ contains the copy $\bar F=\langle \bar a,\bar b,\bar c \rangle$ of $F$, and this $\bar F$ has a benign subgroup $\bar A_{\omega_2\mathcal B}=\langle \bar a^{\bar b_f} \mathrel{|} f\in \omega_2\mathcal B \rangle$ such that $\bar F \cap \bar  L=\bar A_{\omega_2\mathcal B}$ for the respective finitely generated $\bar L$.

Define a function $\chi:\bar F \to F$ sending $\bar a,\bar b,\bar c$ \;to\; $a,b_{-1},c$ (where $b_{-1}=b^{c^{-1}}$). 
If we for any word $w\in F$ set  $\bar w\in \bar F$ to be the analog of $w$ just written ``with bars'', then $\chi(\bar w)\in F$ is $w$ in which each $b_i$ is replaced by $b_{i-1}$. 
Turning to examples in \ref{SU the shifted version} above, if $f=(0,5,9,8)\in \omega_2\mathcal B$, then $\bar b_f=\bar b_0^0 \bar b_1^5 \bar b_2^9 \bar b_3^8$,\, $\bar a_f =  \bar b_f^{-1} \bar a\, \bar b_f$ 
and $\chi: \bar a_f \to a_{f'}=
a^{b_0^5 b_1^9 b_2^8 }
$ with $f'=(5,9,8) \in \sigma^{-1} \omega_2\mathcal B$.
And if $f=(3,2,9,8)\in \omega_2\mathcal B$, then  $\chi: \bar a_f \to a_{f'}$ where $f'\in \sigma^{-1} \omega_2\mathcal B$ is the function sending the integers $-1, 0, 1, 2$ to $3,2,9,8$ (and all other integers to $0$) see \ref{SU the shifted version}. Evidently:

\begin{Lemma}
\label{LE sigma shifted}  
In the above notations  $A_{\sigma^{\,-1} \omega_2 \mathcal B}=\chi(\bar A_{\omega_2 \mathcal B})$  holds.
\end{Lemma}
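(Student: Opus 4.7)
The plan is very direct: since $\bar F = \langle \bar a, \bar b, \bar c\rangle$ is a free group of rank $3$, the map $\chi:\bar F \to F$ specified on generators extends uniquely to a group homomorphism. It then suffices to verify that $\chi$ carries the generating set $\{\bar a_f \mathrel{|} f\in \omega_2\mathcal B\}$ of $\bar A_{\omega_2 \mathcal B}$ onto the generating set $\{a_{f'} \mathrel{|} f' \in \sigma^{-1}\omega_2\mathcal B\}$ of $A_{\sigma^{-1}\omega_2\mathcal B}$, which will yield $\chi(\bar A_{\omega_2 \mathcal B}) = A_{\sigma^{-1}\omega_2 \mathcal B}$.

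The only real computation is tracking what $\chi$ does to indices. Since $\chi(\bar b)=b_{-1}=b^{c^{-1}}$ and $\chi(\bar c)=c$, for any $i\in\Z$ one has
\[
\chi(\bar b_i) = \chi(\bar b)^{\chi(\bar c)^{i}} = (b^{c^{-1}})^{c^{i}} = b^{c^{i-1}} = b_{i-1}.
\]
Consequently, for any $f\in \mathcal E$ viewed as a sequence $(f(0),f(1),\ldots)$,
\[
\chi(\bar b_f) = \prod_i \chi(\bar b_i)^{f(i)} = \prod_i b_{i-1}^{f(i)} = \prod_j b_j^{f(j+1)} = b_{f'}
\]
with $f'(j) = f(j+1)$, i.e.\ $f' = \sigma^{-1} f$ in the notation of point~\ref{SU the shifted version}. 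Hence $\chi(\bar a_f) = \chi(\bar a)^{\chi(\bar b_f)} = a^{b_{f'}} = a_{f'}$.

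Combining these two observations, as $f$ ranges over $\omega_2 \mathcal B$ the sequence $f' = \sigma^{-1} f$ ranges (by the very definition given at the start of point~\ref{SU the shifted version}) over $\sigma^{-1}\omega_2 \mathcal B$. Therefore $\chi$ sends the generating family $\{\bar a_f\}_{f\in\omega_2\mathcal B}$ bijectively onto $\{a_{f'}\}_{f'\in \sigma^{-1}\omega_2\mathcal B}$, and applying the homomorphism $\chi$ to the group they generate yields the desired equality $\chi(\bar A_{\omega_2 \mathcal B}) = A_{\sigma^{-1}\omega_2\mathcal B}$. The only potential pitfall is the index bookkeeping in the shift $\bar b_i \mapsto b_{i-1}$, but once the conjugation identity $(b^{c^{-1}})^{c^i}=b^{c^{i-1}}$ is written out, everything matches the definition of $\sigma^{-1}$ on the nose.
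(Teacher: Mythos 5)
Your proposal is correct and takes essentially the same approach as the paper: the paper declares the lemma ``evident'' after giving only two worked examples of the key identity $\chi(\bar a_f)=a_{\sigma^{-1}f}$, whereas you supply the general index-shift computation $\chi(\bar b_i)=b_{i-1}$ and the resulting bijection on generating sets, which is exactly the reasoning the paper has in mind. Your observation that $\chi$ is well-defined because $\bar F$ is free is a small but worthwhile piece of rigor the paper leaves implicit.
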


This lemma will allow to deduce that $A_{\sigma^{\,-1} \omega_2 \mathcal B}$ is benign in $F$ from the fact that its preimage $\bar A_{\omega_2 \mathcal B}$ is benign in $\bar F$.
Namely, the direct product $\bar  K \times F$ has finite presentation 
with $31+3=34$ generators: the letters
$\bar b,\bar c;\ldots; \bar p_4$ plus the letters $a,b,c$.
Then its subgroup $Q=\big\langle
 \big(\bar w,\chi(\bar w)\big) \mathrel{|} 
 w \in   F\big\rangle$
is 3-generator as it can be generated by 
$(\bar a, a)$,
$(\bar b, b_{-1})$,
$(\bar c, c)$ already.
Hence it is benign in 
$\bar F \times F$ for the finitely presented $\bar F \times F$ and finitely generated $Q$, see Remark~\ref{RE finite generated is benign}. 
Our next objective is to modify $Q$ via a few steps to arrive to $A_{\sigma^{-1} \omega_2 \mathcal B}$ wanted.

$\bar A_{\omega_2\mathcal B} \times F$ is benign in $\bar F \times F$ for the finitely presented $\bar  K \times F$ and finitely generated $\bar L \times F$.

By Corollary~\ref{CO intersection and join are benign multi-dimensional}\;\eqref{PO 1 CO intersection and join are benign multi-dimensional} the intersection $Q_1=(\bar A_{\omega_2\mathcal B} \times F)\cap \,Q=
\big\langle
\big(\bar w,\chi(\bar w)\big) \mathrel{|} 
 w \in   A_{\omega_2\mathcal B}\big\rangle
$ is benign in $\bar F \times F$ for the finitely presented $\bigast$-construction:
$$
\mathscr{M}_1 =
\big((\bar K \times F)\;*_{\bar L \times F} w_1\big)\,*_{\bar F \times F}
\big((\bar F \times F)\;*_{Q} w_2\big)
$$
and for the finitely generated subgroup $(\bar F \times F)^{w_1 w_2}$ of $\mathscr{M}_1$. Here $w_1, w_2$ are some new letters.

Next, $\bar F \cong \bar F \times \1$ is benign in 
$\bar F \times F$ for the finitely presented $\bar F \times F$ and finitely generated $\bar F \times \1$, see Remark~\ref{RE finite generated is benign}.  Hence by the above step the join 
$
Q_2=\big\langle \bar F \times \1 ,\, Q_1 \big\rangle
=\bar F \times \langle
\chi(\bar w) \mathrel{|} 
w \in   A_{\omega_2\mathcal B}\rangle
$ is benign in $\bar F \times F$ for the finitely presented $\bigast$-construction:
$$
\mathscr{M}_2 =
\big((\bar F \times F)\,*_{\bar F \times \1} w_3\big) 
\,*_{\bar F \times F}
(\mathscr{M}_1 *_{(\bar F \times F)^{w_1 w_2}} w_4)
$$
with two next letters $w_3, w_4$,\; and for finitely generated  
$\big\langle(\bar F \times F)^{w_3}\!,\;(\bar F \times F)^{w_4} \big\rangle$ in $\mathscr{M}_2$.

Further,  
$F = \1 \times  F $ is benign in 
$\bar F \times F$ for the finitely presented $\bar F \times F$ and finitely generated $\1 \times  F $.  Hence, the intersection
$$
(\1 \times  F) \cap Q_2 
= \langle
\chi(\bar w) \mathrel{|} 
w \in   A_{\omega_2\mathcal B}\rangle
=\chi(\bar A_{\omega_2 \mathcal B})
=A_{\sigma^{-1} \omega_2 \mathcal B}
$$ is benign $\bar F \times F$ for the finitely presented $\bigast$-construction:
$$
K_{\sigma^{\,-1} \omega_2 
\mathcal B} 
= \mathscr{M}_3
=
\big((\bar F \times F)\,*_{\1 \times  F} w_5\big) 
\,*_{\bar F \times F}
(\mathscr{M}_2 *_{\langle(\bar F \times F)^{w_3},\;(\bar F \times F)^{w_4} \rangle} w_6)
$$
with two further stable letters $w_5, w_6$,\; and for finitely generated 
$L_{\sigma^{\,-1} \omega_2 \mathcal B}
=(\bar F \times F)^{w_5 w_6}$.

Hence, $A_{\sigma^{-1} \omega_2 \mathcal B}$ is benign in $F$ for the above chosen $K_{\sigma^{\,-1} \omega_2 \mathcal B}$ and   $L_{\sigma^{\,-1} \omega_2 \mathcal B}$. Although the construction looks bulky, $K_{\sigma^{\,-1} \omega_2 \mathcal B}$ has just $31+3+6=40$ generators (the generators of $\bar K \times F$ plus $6$ new stable letters $w_1, \ldots, w_6$).
And $L_{\sigma^{\,-1} \omega_2 \mathcal B} = (\bar F \times F)^{w_5 w_6}$ has $6$ generators 
$\bar a^{w_5 w_6}\!\!,\,
\bar b^{w_5 w_6}\!\!,\, 
\bar c^{w_5 w_6}\!\!;\,\;
a^{w_5 w_6}\!\!,\, 
b^{w_5 w_6}\!\!,\, 
c^{w_5 w_6}$.

\subsection{The combinatorial meaning of intersection $\iota (\omega_2\mathcal B ,\;\; \sigma^{-1}\! \omega_2 \mathcal B) $}
\label{SU Intersection} 

Let us leave aside the free constructions for a moment and understand the combinatorial meaning of intersection $\mathcal C=\omega_2\mathcal B \,\cap \,\sigma^{-1} \omega_2 \mathcal B$ of two sets $\omega_2\mathcal B$ and $\sigma^{-1} \omega_2 \mathcal B$ we discussed so far. 
In the language of Higman operations this set has the notation $\iota (\omega_2\mathcal B ,\; \sigma^{-1} \omega_2 \mathcal B)$, but due to Agreement~\ref{AG Agreement about Higman operations} it is better to describe $\mathcal C$ as the set of all sequences that can be constructed \textit{in both ways}: as concatenations of couples from $\B$ (i.e., sequences in $\omega_2\mathcal B$), and \textit{in the same time} as ``shifted'' versions of such concatenations (i.e., sequences in $\sigma^{-1}\omega_2\mathcal B$).
For example, since ${\mathcal B}=\upsilon(\zeta_{\!1} \Zz , \tau\S)$ contains the couples 
$(0,5)$, 
$(4,3)$,
$(2,1)$,
then 
$\omega_2\mathcal B$
contains the sequence 
$(0,5,\; 4,3,\; 2,1)$, and so $\sigma^{-1}\omega_2\mathcal B$ contains the ``shifted'' sequence  $\sigma^{-1} (0,5,\; 4,3,\; 2,1) = (5,4,3,2,1)$, see 
point~\ref{SU Integer functions f}, point~\ref{SU The Higman operations}, and point \ref{SU the shifted version} in the current section. 
But $(5,4,3, 2, 1) = (5,4,\; 3, 2,\; 1,0)$ also is in $\omega_2\mathcal B$ because it may be obtained as the concatenation of three couples 
$(5,4)$,\, $(3,2)$,\, $(1,0)$ from $\B$ and, hence $(5,4,3, 2, 1)\in \mathcal C$.
%

\begin{Lemma}
\label{LE Any function  from C has non-negative coordinates only}
In above notations any function $f\in \mathcal C$ has non-negative coordinates $f(i)$ only.
\end{Lemma}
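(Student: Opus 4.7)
My plan is to first translate the condition $f\in \mathcal C$ into a purely local statement about consecutive coordinates of $f$, and then run a finite-support downward induction.

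The first step is to extract what membership in $\omega_2 \mathcal B$ and in $\sigma^{-1}\omega_2 \mathcal B$ means for the \emph{pairs} of consecutive values of $f$. By the definition of $\omega_2$, membership of $f$ in $\omega_2\mathcal B$ says exactly that $(f(2i),f(2i+1))\in \mathcal B$ for every $i\in\Z$. And $f\in \sigma^{-1}\omega_2\mathcal B$ means there is $g\in\omega_2\mathcal B$ with $f(i)=g(i+1)$, which, after re-indexing, is the same as $(f(2i-1),f(2i))\in\mathcal B$ for every $i\in\Z$. Taking the intersection, the two conditions together say that \emph{every} consecutive pair $(f(j),f(j+1))$, $j\in\Z$, belongs to $\mathcal B$.

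Next I recall the explicit description of $\mathcal B$ from Section~\ref{SE Construction of KB}: every element of $\mathcal B$ is of the form $(0,n)$ or $(n\!+\!1,n)$ for some $n\in\Z$. So the statement from the previous paragraph becomes the following local rule for $f\in\mathcal C$:
\begin{equation*}
\text{for every } j\in\Z, \quad \text{either } f(j)=0, \quad \text{or } f(j)=f(j+1)+1.
\end{equation*}

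Now I would close the argument by a downward induction using the finite support of $f\in\mathcal E$. Pick $N$ so large that $f(i)=0$ for all $i>N$; in particular $f(i)\ge 0$ on that tail. Assuming inductively that $f(j+1)\ge 0$, the local rule gives either $f(j)=0\ge 0$ or $f(j)=f(j+1)+1\ge 1$, so $f(j)\ge 0$ in either case. Starting from the tail $i>N$ and descending, this yields $f(j)\ge 0$ for every $j\in\Z$, which is exactly the claim. There is no real obstacle here; the only subtlety is being careful that $\sigma^{-1}\omega_2\mathcal B$ adds the \emph{shifted} pair constraint so that in the intersection $\mathcal C$ one controls \emph{all} consecutive pairs (both even-starting and odd-starting), not just the even-starting ones that $\omega_2\mathcal B$ alone supplies.
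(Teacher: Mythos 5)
Your proof is correct and follows essentially the same approach as the paper: both observe that membership in $\omega_2\mathcal B$ together with membership in $\sigma^{-1}\omega_2\mathcal B$ forces every consecutive pair $\big(f(j),f(j+1)\big)$ into $\mathcal B$, and then exploit the finite support of $f$ to rule out negative values. The paper phrases the final step as a contradiction at a least (most negative) coordinate, splitting by the parity of that index, rather than your cleaner unified downward induction from the tail, but the underlying argument is the same.
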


\begin{proof}
$\omega_2 \mathcal B$
consists of functions $f$ constructed by blocks of type $(0,n)$ or of type $(n,n\!-\!1)$. 
For any \textit{even} $n=2,4,\ldots$ we can use the pairs $(n,n-\!1), (n\!-\!2, n\!-\!3),\ldots, (2,1)$  
to construct in $\omega_2 \mathcal B$ the sequence $f=(n,n\!-\!1,\ldots,1)$.
The sequence $g=(0,n,n\!-\!1,\ldots,1,0)$ can be built by the pair $(0,n)$ and the pairs $(n\!-\!1,n\!-\!2),\ldots, (3,2), (1,0)$.
Clearly, $\sigma^{-1} g = f$, and so $f\in  \iota(\omega_2 \mathcal B,\;\sigma^{-1} \omega_2 \mathcal B)=\mathcal C$.
In a similar manner we discover in $\mathcal C$ all the functions $f=(n,n\!-\!1,\ldots,1)$ for \textit{odd} $n=1,3,\ldots$
%
Thus, for any $n=1,2,\ldots$ the set $\mathcal C$ contains a function $f$ with the property $f(0)=n$.
This is true not only for the index $i=0$: we for any $i\in \Z$ can find an $f\in \mathcal C$ such that $f(i)=n$.

Show that  $f(i)<0$ is impossible for any $f\!\in \mathcal C$.
Assuming the contrary, suppose the \textit{least} coordinate $f(k)<0$ of $f$ is achieved at some $k$. 
If $k$ is \textit{even}, then the pair
$\big(f(k), f(k+1)\big)$ in $f\in \omega_2 \mathcal B$ has to be either of type $(n,\, n\!-\!1)$ (which is impossible as $f(k+1)$ cannot be less than $f(k)$ by minimality of $f(k)$) or it has to be of type $(0,n)$ (impossible as  $f(k)=0 \nless  0$). 
Using $\sigma^{-1}\omega_2 \mathcal B$ we in a similar way we rule out the case for an \textit{odd} index $k$.
%
\end{proof}

\subsection{The benign intersection $A_{\,\mathcal C} $} 
\label{SU The benign intersection  A_C} 
It is evident that $A_{\omega_2\mathcal B} \cap\,A_{\sigma^{-1} \omega_2 \mathcal B}=A_{\,\mathcal C}$, and then by Corollary~\ref{CO intersection and join are benign multi-dimensional}\;\eqref{PO 1 CO intersection and join are benign multi-dimensional} the group
$A_{\,\mathcal C} $ is benign in $F$ for the finitely presented group 
\begin{equation}
\label{EQ K_C}
K_{\,\mathcal C}
=
\big(K_{\omega_2 \mathcal B} *_{L_{\omega_2 \mathcal B}} x_1\big) *_F
\big(K_{\sigma^{\,-1}\omega_2 \mathcal B} *_{L_{\sigma^{\,-1}\omega_2 \mathcal B}} x_2\big),
\end{equation}
and for its finitely generated subgroup $L_{\,\mathcal C}=F^{\,x_1 x_2}$.

$K_{\,\mathcal C}$ can be given by $31+40-3+2=70$ generators: the generators of $K_{\omega_2 \mathcal B}$ and of $K_{\sigma^{\,-1}\omega_2 \mathcal B}$ (they share three generators $a,b,c$) plus two new stable letters $x_1,x_2$ from the $\bigast$-construction \eqref{EQ K_C}.
And $L_{\,\mathcal C}$ is generated by just $3$ elements $a^{x_1 x_2}\!\!,\;
 b^{x_1 x_2}\!\!,\; 
 c^{x_1 x_2}$.

\subsection{Writing $K_{\sigma^{\,-1} \omega_2 \mathcal B}$ and $K_{\mathcal C}$ by generators and defining relations}
\label{SU Writing K sigma - and K_C by generators and defining relations}

For each of $31$ generators $b,c,\ldots, q_4$ of $K_{\omega_2 \B}$ we introduced $31$ generators \textit{with bars}:
\begin{equation}
\label{EQ 31 letters with bars}
\begin{split} 
& \bar b, \bar c, \bar t_2, \bar t'_2, \bar t_0, \bar t'_0, \bar r_1, \bar r_2,
\bar g,\bar h,\bar k,\, \bar l_{0,0},\, \bar l_{1,0},\, \bar l_{1,1},
\bar s_0, \bar s_1, \bar s_2,\, \bar a, \bar r, \\
& \hskip17mm
\bar t_1, \bar t'_1, \bar u_1, \bar u_2, \bar d,\bar e, \bar v_1, \bar v_2,\;\; \bar q_1, \bar q_2, \bar q_3, \bar q_4.
\end{split}
\end{equation}
These new  $31$ letters generate the copy $\bar K_{\omega_2 \B}$ of $K_{\omega_2 \B}$, provided that we in
\eqref{EQ relations K omega2 B} also replace all letters in all relations by their copies \textit{with bars} to get new $152$ relations:
\begin{equation}
\label{EQ 152 relations with bars}
\bar b^{\bar t_2}= \bar b^{\bar c^{-1}}\!\!,\;
\bar b^{\bar t'_2}= \bar b^{\bar c^{-2}}\!\!,\;
\bar b^{\bar t_0}= \bar b^{\bar c}\!,\;
\bar b^{\bar t'_0}=\bar  b
; \; \ldots \; ;
\text{$\bar q_4$ fixes $\bar a,\bar b,\bar c$},
\end{equation}
and then write a copy of \eqref{EQ relations K omega2 B} adding bars to each letter:
\begin{equation*}
\label{EQ relations bar K omega2 B}
\begin{split}
\bar K_{\omega_2 \B}
= 
(\bar {\mathscr{L}} *_{\bar L'} \bar p_3) \;*_{\bar {\mathscr{D}}} \,
(\bar {\mathscr{D}} *_{\bar F} \bar p_4) = \big\langle
\text{$31$ generators from \eqref{EQ 31 letters with bars}}
\mathrel{\,|\,} 
\text{$152$ relations from \eqref{EQ 152 relations with bars}}
\big\rangle
\end{split}
\end{equation*}
which has again $31$ generators and $152$ relations. And $\bar L_{\omega_2 \B} = \bar {\mathscr{D}}^{\,\bar p_3 \bar p_4}$.
Then:
\begin{equation}
\label{EQ relations K omega2 B x G}
\begin{split}
\bar K_{\omega_2 \B} \!\times\! F
& = \big\langle
\text{$31$ generators from \eqref{EQ 31 letters with bars}};\; a,b,c
\mathrel{\;|\;} 
\text{$152$ relations of $\bar K_{\omega_2 \B}$ from \eqref{EQ 152 relations with bars}};\\
& \hskip11mm 
\text{$a,b,c$ commute with each of generators \eqref{EQ 31 letters with bars}}
\big\rangle.
\end{split}
\end{equation}
$\bar K_{\omega_2 \B} \times F$ has $31+3=34$ generators and $152+3\cdot 31 = 245$ relations. 

From definition of $\mathscr{M}_1$, $\mathscr{M}_2$ and $K_{\sigma^{\,-1} \omega_2 \mathcal B} = \mathscr{M}_3$ in point~\ref{SU benign A sigma omega B} we have:
\begin{equation}
\label{EQ relations M1}
\begin{split}
\mathscr{M}_1 
& = \big\langle
\text{$31$ generators from \eqref{EQ 31 letters with bars}};\; a,b,c;\; w_1, w_2
\mathrel{\;\;|\;\;} \\
& \hskip11mm 
\text{$245$ relations of $\bar K_{\omega_2 \B} \times F$ from \eqref{EQ relations K omega2 B x F}};\\
& \hskip11mm 
\text{$w_1$ fixes $19$ generators in the first row of \eqref{EQ 31 letters with bars} and $a,b,c$;}
\\
& \hskip11mm 
\text{$w_2$ fixes $\bar a a$,
$\bar b  b^{c^{-1}}$\!\!\!,\;
$\bar c c$}
\big\rangle.
\end{split}
\end{equation}
$\mathscr{M}_1$ has $31+3+2=36$ generators and $245+19+3+3 = 270$ relations. While:
\begin{equation}
\label{EQ relations M2}
\begin{split}
\mathscr{M}_2 
& = \big\langle
\text{$36$ generators from \eqref{EQ relations M1}};\; w_3, w_4
\mathrel{\;|\;} 
\text{$270$ relations of $\mathscr{M}_1$ from \eqref{EQ relations M1}}\\
& \hskip11mm 
\text{$w_3$ fixes $\bar a, \bar b, \bar c$;\;\;  $w_4$ fixes $\bar a^{w_1 w_2}, \bar b^{w_1 w_2}, \bar c^{w_1 w_2}, 
a^{w_1 w_2}, b^{w_1 w_2}, c^{w_1 w_2}$}
\big\rangle.
\end{split}
\end{equation}
$\mathscr{M}_2$ has $36+2=38$ generators and $270+3+6 = 279$ relations.  And then:
\begin{equation}
\label{EQ relations M3}
\begin{split}
K_{\sigma^{\,-1} \omega_2 \mathcal B}
& = \big\langle
\text{$38$ generators from \eqref{EQ relations M2}};\;  w_5, w_6
\mathrel{|} 
\text{$279$ relations of $\mathscr{M}_2$ from \eqref{EQ relations M2}};\\
& \hskip9mm 
\text{$w_5$ fixes $a,b,c$}; \;
\text{$w_6$ fixes conjugates of $\bar a, \bar b, \bar c,\; a,b,c$ by $w_3$ and by $w_4$}
\big\rangle.
\end{split}
\end{equation}
$K_{\sigma^{\,-1} \omega_2 
\mathcal B}$ has $38+2=40$ generators and $279+3+6\cdot 2 = 294$ relations.
And  
$L_{\sigma^{\,-1} \omega_2 \mathcal B} = (\bar F \times F)^{w_5 w_6}$ has six generators 
$\bar a^{w_5 w_6}\!\!,\,
\bar b^{w_5 w_6}\!\!,\, 
\bar c^{w_5 w_6}\!\!;\,\;
a^{w_5 w_6}\!\!,\, 
b^{w_5 w_6}\!\!,\, 
c^{w_5 w_6}$.

Finally, from definition of $K_{\,\mathcal C}$ in point~\ref{SU The benign intersection  A_C} we have:
\begin{equation}
\label{EQ relations KC}
\begin{split}
K_{\,\mathcal C} 
& = \big\langle
\text{$31$ generators of $K_{\omega_2 \B}$ from \eqref{EQ relations K omega2 B}};\\
&\hskip7.5mm
\text{$37$ generators of $K_{\sigma^{\,-1} \omega_2 
\mathcal B}$ from \eqref{EQ relations M3} (all except $a,b,c$)};\;\;   x_1, x_2
\mathrel{\;\;|\;\;}\\
& \hskip11mm 
\text{$152$ relations of $K_{\omega_2 \B}$ from \eqref{EQ relations K omega2 B}}; \;\;\;
\text{$294$ relations of $K_{\sigma^{\,-1} \omega_2 
\mathcal B}$ from \eqref{EQ relations M3}};\\
& \hskip11mm 
\text{$x_1$ fixes the conjugates of
$19$ generators of $\mathscr{D}$ from \eqref{EQ relations D}
 \,by\, $p_3 p_4$;}
\\
& \hskip11mm 
\text{$x_2$ fixes conjugates of $\bar a, \bar b, \bar c,\, a,b,c $ by $w_5 w_6$}
\big\rangle.
\end{split}
\end{equation}
Above we \textit{excluded} $a,b,c$ from the generators of $K_{\sigma^{\,-1} \omega_2 
\mathcal B}$ from \eqref{EQ relations M3} because they already were included among the $31$ generators of $K_{\omega_2 \B}$ from \eqref{EQ relations K omega2 B}.
$K_{\,\mathcal C}$ has $31+40 - 3 +2=70$ generators and $152+294+19+6 = 471$ relations.
$L_{\,\mathcal C}=F^{\,x_1 x_2}$ is generated by just $3$ elements $a^{x_1 x_2}\!\!,\;
 b^{x_1 x_2}\!\!,\; 
 c^{x_1 x_2}$\!.

\section{Construction of $K_{\mathcal F}$ and $K_{\mathcal T}$
}
\label{SE Construction of KF and KT}

\subsection{Functions $f$ with the condition $f(4) = f(5)-2$} 
\label{SU Functions with the condition f(4) = f(5)-2}

Denote by $\mathcal D$ the set \textit{all} functions  $f \in \E$  such that $f(4) = f(5)-2$,
and $f(i)=0$ for all $i \neq 4,5$,  i.e., the set of all sequences of type $(0, 0, 0, 0, 0,\; n\!-\!2,\; n)$ with all $n\in \Z$.
It is clear that in $F$ we have  
$
a^{b_5^2}=a^{b_0^0 b_1^0 b_2^0 b_3^0 b_4^0 b_5^2 b_6^0}=a_{(0, 0, 0, 0, 0, 2, 0)}
$
for the sequence $f=(0, 0, 0, 0, 0, 2, 0)\in \mathcal D$.
Returning to the group $\mathscr{A}$ of point~\ref{SU Building the group script A}, again use Lemma~\ref{LE action of d_m on f} in the manner used in point~\ref{SU KB is benign}, to get 
$
F \;\cap\; \langle
a^{b_5^2},\;\, d_5 d_6
\rangle = A_{\mathcal D}
$. 
Hence, the subgroup $A_{\mathcal D}$ is benign in $F$ for the finitely presented overgroup $K_{\mathcal D}\! = \!\mathscr{A}$ of $F$, and for the finitely generated subgroup $L_{\mathcal D} = \langle
a^{b_5^2},\; d_5 d_6
\rangle$ of $K_{\mathcal D}$.

\subsection{Intersection ${\mathcal C}\cap {\mathcal D}$} 
\label{SU The intersection cap D}

The combinatorial meaning of the intersection ${\mathcal C}\cap {\mathcal D}$ is easy to see: since ${\mathcal C}$ consists of all sequences $f \in {\mathcal E}$ with \textit{non-negative} coordinates, see Lemma~\ref{LE Any function  from C has non-negative coordinates only}, and since the sequences $f \in {\mathcal D}$ satisfy $f(4) = f(5)-2$, then 
${\mathcal C}\cap {\mathcal D}$ is noting but the set of all sequences  $(0, 0, 0, 0, 0, n\!-\!2, n)$ with $n=2,3,4,\ldots$ only. 

In points \ref{SU The benign intersection  A_C} and \ref{SU Functions with the condition f(4) = f(5)-2} we saw that $A_{\mathcal C}$ and $A_{\mathcal D}$ are benign in $F$ for these two specific subsets ${\mathcal C}$ and ${\mathcal D}$ of ${\mathcal E}$.  
Hence $A_{\,{\mathcal C}\cap {\mathcal D}}$ also is benign in $F$.
Using Corollary~\ref{CO intersection and join are benign multi-dimensional}\;\eqref{PO 1 CO intersection and join are benign multi-dimensional} together with the respective groups $K_{\,\mathcal C}$ and $K_{\mathcal D}=\mathscr{A}$ along with their finitely presented subgroups $L_{\mathcal C}$ and $L_{\mathcal D}$ we get that in finitely presented $\bigast$-construction:
\begin{equation}
\label{EQ K_C cap D}
K_{\,{\mathcal C}\cap\, {\mathcal D}}
=
\big(K_{\mathcal C} *_{L_{\mathcal C}} y_1\big) *_{\mathscr{A}}
\big(\mathscr{A} *_{L_{\mathcal D}} y_2\big)
\end{equation}
we have $F \cap F^{y_1 y_2} = A_{\,{\mathcal C}\cap {\mathcal D}}$. So it remains to choose the $3$-generator subgroup $L_{\,{\mathcal C}\cap {\mathcal D}} =  F^{y_1 y_2}$ to have that $A_{\,{\mathcal C}\cap {\mathcal D}}$ is benign.

\begin{Remark}
Comparing \eqref{EQ K_C cap D} with the initial definition in 
\eqref{EQ initial form of star construction}
and
\eqref{EQ nested Theta  multi-dimensional short form} 
notice that here we took $M=\mathscr{A}$ and not $M=F$ because $K_{\mathcal C} *_{L_{\mathcal C}} y_1$ and $\mathscr{A} *_{L_{\mathcal D}} y_2$ intersect in $\mathscr{A}$ (the elements are $b,c,t_1, t'_1, a, u, v, d, e$ shared by $K_{\mathcal C}$ and $\mathscr{A}$, and so choosing a smaller amalgamated subgroup $F$ we would have to deal with \textit{distinct} copies of, say, $u,v$ in $K_{\mathcal C}$ and in $K_{\mathcal D}=\mathscr{A}$). 
\end{Remark}

\subsection{Extending the construction for the set $\mathcal F$} 
\label{SU Extending the construction for the set F}

Let us enlarge the set of ${\mathcal C}\cap {\mathcal D}$ of previous point to the subset $\mathcal F$ of $\mathcal E$ consisting of \textit{all} sequences:  
\begin{equation}
\label{EQ sequences in F}
f=\big(j_0,\, j_1,\, \ldots,\, j_{5},\, \ldots,\, j_{18} \big) \in \mathcal E
\end{equation}
such that $f(5) = j_{5} \ge 2$ and each 
$f(i) = j_{i}$ is an \textit{arbitrary} integer for $i=0, \ldots, 4$ and for $i=6, \ldots, 18$. 
Clearly ${\mathcal C}\cap {\mathcal D}$ is a subset of $\mathcal F$.
Let us show that $A_{\mathcal F}$ is benign in $F$.

Denote $K_{\mathcal F}=K_{\,{\mathcal C}\cap {\mathcal D}}$  and 
\begin{equation}
\label{EQ the subgroup LF}
\begin{split}
L_{\mathcal F} & = \big\langle L_{\,{\mathcal C}\cap {\mathcal D}},\;\; d_i \mathrel{|}  i=0, \ldots, 4,\; 6, \ldots, 18 \big\rangle \\
& = \big\langle a^{y_1 y_2}\!,\; b^{y_1 y_2}\!,\; c^{y_1 y_2}\!,\; \; d_i \mathrel{|}  i=0, \ldots, 4,\; 6, \ldots, 18 \big\rangle
\end{split}
\end{equation}
a $21$-generator subgroup in $K_{\mathcal F}$. 
It is easy to calculate that for any sequence $f\in \mathcal F$ the element $a_f$ is in $F \cap L_{\mathcal F}$. Say, for 
$f=(3, 
1, 0, 2, 1, \, \boldsymbol 8, \,
4, 6, -5, 0, 1, 
2, 1, 3,-4, 2, 
1, 5, -7)$
(notice that the $5$'th coordinate $j_{5}=\boldsymbol 8$ is greater than or equal to $2$) we can take the sequence 
$g=(0, 
0, 0, 0, 0, \, \boldsymbol 8, \,
\boldsymbol 7)\, \in\, {\mathcal C}\cap {\mathcal D}$,
the product 
\begin{equation}
\label{EQ bold u}
d' =
d_0^{3}\, 
d_1^{}\, d_2^{0}\, d_3^{2}\, d_4^{} \;\; 
\boldsymbol{d_6^{-3}}\; d_7^{6}\, d_8^{-5} \,d_9^{0}\, d_{10}^{} \,
d_{11}^{2}\, d_{12}^{}\, d_{13}^{3} \,d_{14}^{-4} \,d_{15}^{2} \,
d_{16}^{} \,d_{17}^{5}\, d_{18}^{-7} 
\end{equation}
(notice how above $d_5^{}$ is missing, and how we used ${d_6^{-3}}$),
and calculate:
\begin{equation}
\label{EQ a_f for long f}
\begin{split}
\big(a_g\big)^{d'}\!\!\!\!  = \big(a^{b_g}\big)^{d'}  
 \!\!\!\!= \big(a^{b_5^{\boldsymbol 8} \, b_6^{\boldsymbol 7}}\big)^{d'} \!\!\!\! = a^{
b_0^{3}\,
b_1^{}\,
b_2^{0}\,
b_3^{2}\,
b_4^{}\,
\cdot
\,
b_5^{\boldsymbol 8} \; b_6^{\boldsymbol 7-3}
\cdot
\;
b_7^{6}\,
b_8^{-5}\,
b_9^{0}\,
b_{10}^{}\,
b_{11}^{2}\,
b_{12}^{}\,
b_{13}^{3}\,
b_{14}^{-4}\,
b_{15}^{2}\,
b_{16}^{}\,
b_{17}^{5}\,
b_{18}^{-7}
}
=a^{b_f} \! = a_f
\end{split}
\end{equation}
(notice that $b_6^{7-3}\!=\,b_6^{4}$). And since also $a_f \in F$, we have $A_{\mathcal F} \subseteq F \cap L_{\mathcal F}$.

\medskip
To show the reverse inclusion apply the ``conjugates collecting'' process of point~\ref{SU The conjugates collecting process} with $\X=L_{\,{\mathcal C}\cap {\mathcal D}}$ and $\Y=\big\{ d_i \mathrel{|}  i=0, \ldots, 4,\; 6, \ldots, 18 \big\}$.
Any $l\in \langle \X,\Y \rangle = L_{\mathcal F}$ can be written as:
\begin{equation}
\label{EQ conjugates collecting for l}
l
= x_1^{\pm v_1}
x_2^{\pm v_2}
\cdots
x_{k}^{\pm v_k}
\cdot
v
\end{equation}
where  $v_1,v_2,\ldots,v_k,\, v\in\langle \Y \rangle$, and where
$x_1,x_2,\ldots,x_k \in \X = L_{\,{\mathcal C}\cap {\mathcal D}}$. 

If all elements $x_i$ happen to be in $F$, then they also are in the intersection $F \cap L_{\,{\mathcal C}\cap {\mathcal D}}$ which coincides to $A_{\,{\mathcal C}\cap {\mathcal D}}$. If so, then regardless which words $v_i\in \Y$ we pick, the conjugates $x_i^{\pm v_i}$ are being calculated exactly in the way we did it in \eqref{EQ a_f for long f}, and so  $x_i^{\pm v_i} \!\! \in A_{\mathcal F}$. Also,  $l\in F$ necessarily implies 
$v=1$, that is $l\in A_{\mathcal F}$.
On the other hand, if there are some elements $x_i$ in 
\eqref{EQ conjugates collecting for l} are outside $F$, the their conjugates $x_i^{\pm v_i}$ also are outside $F$. I.e., in such a case \eqref{EQ conjugates collecting for l} may be in $F$ only if all such $x_i^{\pm v_i}$ cancel each other out, plus $v=1$, of course.

Hence, $A_{\mathcal F}$ is benign in $F$ for the above mentioned $K_{\mathcal F}=K_{\,{\mathcal C}\cap {\mathcal D}}$  and $L_{\mathcal F}$ from \eqref{EQ the subgroup LF}.


\subsection{Defining a larger set $\mathcal H$ covering $\mathcal T$}
\label{SE Defining a larger set}

Following the notation of \eqref{EQ Higman code for Q}, for $k\!=\!2$ pick:
\begin{equation}
\label{EQ f_2}
f_2=
\big(1,\;\boldsymbol{-2},\;-1, \;\boldsymbol{-2} ,\; -1,\;\; \boldsymbol{2},\; \; 1,\; \; \boldsymbol{2},\; \; 1,\;  1, -1,\;\;  \boldsymbol{-1},\; -1,-1,\,  1,\;\;  \boldsymbol{1},\;\;  1,\;\;  \boldsymbol{1},\;\;  -1\big).
\end{equation}
for convenience of later use we stressed some of the coordinates  above in bold (those coordinates which actually depend on $k$).
Then in $F$ and in $\mathscr{A}$ define:
\begin{equation}
\label{EQ a f 2}
a_{f_2}=a^{b_{f_2}},
\end{equation}
\begin{equation}
\label{EQ d''}
d'' = d_1^{-1} d_3^{-1} d_5\, d_7\, d_{11}^{-1} d_{15}\, d_{17}.
\end{equation}
It is not hard to verify  that $a_{f_2}^{d''^{\,n}}=a_{f_{2+n}}$ holds for any $n\in \Z$. Indeed, using Lemma~\ref{LE action of d_m on f}, in analogy with say \eqref{EQ action of d1} or \eqref{EQ action of d0d1}, we for any \textit{positive}  $n=1,2,\ldots$ have:
\begin{equation}
\label{EQ expand in positive direction}
\begin{split}
a_{f_2}^{d''^{\,n}}
&=\big(a_{f_2}^{d''}\big)^{{d''}^{\,n-1}}
\!\!\!
=\Big(\big(a^{b_{f_2}}\big)^{d''}\Big)^{d''^{\,n-1}}
\\
&=\Big(\big(a^{
b_0\, b_1^{-2}\, b_2^{-1}\, b_3^{-2}\, b_4^{-1}\, b_5^{2}\; b_6\; b_7^{2}\; b_8\, b_9\, b_{10}^{-1}\, b_{11}^{-1}\, b_{12}^{-1}\, b_{13}^{-1}\, b_{14}\, b_{15}\, b_{16}\, b_{17}\, b_{18}^{-1}
}\big)^{d_1^{-1} d_3^{-1} d_5\, d_7\, d_{11}^{-1} d_{15}\, d_{17}}\Big)^{{d''}^{\,n-1}}\\
&=\Big(a^{
b_0\, b_1^{-3}\, b_2^{-1}\, b_3^{-3}\, b_4^{-1}\, b_5^{3}\; b_6\; b_7^{3}\; b_8\, b_9\, b_{10}^{-1}\, b_{11}^{-2}\, b_{12}^{-1}\, b_{13}^{-1}\, b_{14}\, b_{15}^2\, b_{16}\, b_{17}^2\, b_{18}^{-1}
}\Big)^{{d''}^{\,n-1}}\\
&=\;a_{f_3}^{{d''}^{\,n-1}}
\!\!\!
=\; a_{f_4}^{{d''}^{\,n-2}}
\!\!\!
= \;\cdots \; = 
\; a_{f_{1+n}}^{d''}
\!\!\!
=\; a_{f_{2+n}}.
\end{split}
\end{equation}
The analog of this  for  \textit{negative}  powers is equally easy to verify:
\begin{equation}
\label{EQ expand in negative direction}
a_{f_2}^{{d''}^{-n}} 
= \big(a_{f_2}^{{d''}^{-1}}\big)^{{d''}^{-(n-1)}}
\!\!\!
=\; a_{f_{2-n}}.
\end{equation}
Denote by $\mathcal H$ the set of all sequences of type:
\begin{equation*}
\label{EQ set larger than F}
\big(1,-2\!-\!n,-1, -2\!-\!n , -1,\; 2+n,\;  1,\;  2+n, \; 1,\;  1, -1,\;  1\!\!-\!n, -1,-1,\;  1,\; n\!-\!\!1,\;  1,\;  n\!-\!\!1,\;  -1\big)
\end{equation*}
for \textit{all} integers $n\in \Z$.
This set $\mathcal H$ clearly contains the set $\mathcal T$ given earlier by \eqref{EQ Higman code for Q}. 

The above equalities 
\eqref{EQ expand in positive direction} and 
\eqref{EQ expand in negative direction} together mean that $\mathcal H$ is benign in $F$, as $F \cap L_{\mathcal H} = A_{\mathcal H}$ holds in the finitely pretended overgroup $K_{\mathcal H}=\mathscr{A}$ of $F$, and for the finitely generated subgroup $L_{\mathcal H}=\langle a_{f_2}, \; {d''}\rangle$ of the latter.

\subsection{The intersection $\mathcal F \cap \mathcal H = \mathcal T$}
\label{SU Obtaining a larger set}

The set $\mathcal T$ of Higman codes for $T_\Q$ defined in \eqref{EQ Higman code for Q} is nothing but the set of all those sequences from $\mathcal H$ for which $f(5) \ge 2$ holds.

On the other hand the set $\mathcal F$ of sequences \eqref{EQ sequences in F} constructed in point~\ref{SU Extending the construction for the set F} consists of all $f\in \mathcal E$
for which $f(5) \ge 2$, the coordinates 
$f(i)$ may be \textit{any} integers for $i=0, \ldots, 4$ and for $i=6, \ldots, 18$, and the coordinates $f(i)$ all are zero for any $i\notin \{ 0, 1, \ldots, 18 \}$.
Hence, 
$
\mathcal F \cap \mathcal H = \mathcal T,
$
and since $A_{\mathcal F},\,A_{\mathcal H}$ both are benign in $F$, the intersection $A_{\mathcal T}=A_{\mathcal F} \cap A_{\mathcal H}$ also is benign in $F$. Corollary~\ref{CO intersection and join are benign multi-dimensional}\;\eqref{PO 1 CO intersection and join are benign multi-dimensional} lets us construct the  finitely presented overgroup $K_{\mathcal T}$ of $F$:
\begin{equation*}
\label{EQ KT}
K_{\mathcal T}
=
\big(K_{\mathcal F} *_{L_{\mathcal F}} z_1\big) *_{\mathscr{A}}
\big(K_{\mathcal H} *_{L_{\mathcal H}} z_2\big)
\end{equation*}
(in this $\bigast$-construction $M=K_{\mathcal H}=\mathscr{A}$),
and its finitely generated subgroup $
L_{\mathcal T}=F^{z_1 z_2}$.

\smallskip
The goal set at the end of  point~\ref{SU Getting the  Higman code from TQ} has been achieved.

\subsection{Writing $K_{\mathcal F}$ and $K_{\mathcal T}$  by generators and defining relations}
\label{SU Writing K_F and K_T by generators and defining relations} 

Using $K_{\,\mathcal C}$ in point~\ref{SU The benign intersection  A_C}, 
$L_{\mathcal F}$ in point~\ref{SU Extending the construction for the set F}, 
$K_{\mathcal F}=K_{\,{\mathcal C}\cap {\mathcal D}}$ in point~\ref{SU The intersection cap D} and point~\ref{SU Extending the construction for the set F},
$L_{\,\mathcal C}$ in point~\ref{SU The benign intersection  A_C} and 
$L_{\mathcal D}$ in point~\ref{SU Functions with the condition f(4) = f(5)-2}:
\begin{equation}
\label{EQ relations KF}
\begin{split}
K_{\,\mathcal F} 
& = \big\langle
\text{$70$ generators of $K_{\mathcal C}$ from \eqref{EQ relations KC}};\;\;\;  y_1, y_2
\mathrel{\;\;|\;\;}\\
& \hskip11mm 
\text{$471$ relations of $K_{\mathcal C}$ from \eqref{EQ relations KC}};\\[-5pt]
& \hskip11mm 
\text{$y_1$ fixes $a^{x_1 x_2}\!\!,\;
 b^{x_1 x_2}\!\!,\; 
 c^{x_1 x_2}$}; \;\;\;\;\;
\text{$y_2$ fixes $a^{(b^2)^{c^5}}\!\!\!,\;\; d^{e^5}\! d^{e^6}$}
\big\rangle.
\end{split}
\end{equation}
Here $K_{\,\mathcal F}$ has $70 +2=72$ generators and $471+5 = 476$ relations.

And 
$L_{\mathcal F} = \big\langle a^{y_1 y_2}\!,\; b^{y_1 y_2}\!,\; c^{y_1 y_2}\!,\; \; d_i \text{\, with \,}  i=0, \ldots, 4,\; 6, \ldots, 18 \big\rangle$ has
$3+18=21$ generators.

Finally, using 
$K_{\mathcal H}=\mathscr{A}$ and
$L_{\mathcal H}$,\; 
$d''$,\;
$a_{f_2}$ 
from point~\ref{SE Defining a larger set}
together with 
$\mathcal F \cap \mathcal H = \mathcal T$, \;
$K_{\mathcal T}$ and
$L_{\mathcal T}$
from point~\ref{EQ expand in negative direction} we write:
\begin{equation}
\label{EQ relations KТ}
\begin{split}
K_{\,\mathcal T} 
& = \big\langle
\text{$72$ generators of $K_{\mathcal F}$ from \eqref{EQ relations KF}};\;\;\;  z_1, z_2
\mathrel{\;\;|\;\;}\\
& \hskip11mm 
\text{$476$ relations of $K_{\mathcal F}$ from \eqref{EQ relations KF}};\\[-5pt]
& \hskip11mm 
\text{$z_1$ fixes $a^{y_1 y_2}\!,\; b^{y_1 y_2}\!,\; c^{y_1 y_2}$ and all $d^{e^i}$ for $i=0, \ldots, 4,\; 6, \ldots, 18$};\\[-5pt]
& \hskip11mm 
\text{$z_2$ fixes the elements $a_{f_2}$ of \eqref{EQ a f 2}, and $d''$ of \eqref{EQ d''}}
\big\rangle.
\end{split}
\end{equation} 
Here 
$K_{\,\mathcal T}$ has $72 +2=74$ generators and $476+3+18+2= 499$ relations.
And 
$L_{\mathcal T}=F^{z_1 z_2} 
= \big\langle a^{z_1 z_2}\!,\; b^{z_1 z_2}\!,\; c^{z_1 z_2} \big\rangle$

\section{The final embedding step with the Higman Rope Trick}
\label{SE The final embedding}

\noindent
In first two points of this section we show that if $A_{\mathcal T}$ is benign in $F$, then a respective subgroup $W_{\mathcal T}$ is benign in the free group $\langle p,q \rangle$ of rank $2$. Then in point~\ref{SU The Higman Rope Trick} we apply the ``Higman Rope Trick'' (see \cite{Higman rope trick}) to build the finitely presented overgroup $\mathcal Q$ of $\Q$.

\subsection{If $A_{\mathcal T}$ is benign in $\langle a, b, c\rangle$, then $Z_{\mathcal T}$ is benign in $\langle z, m, n\rangle$}
\label{SU If AB is benign then ZB also is benign}

For this section we need some notation slightly differing from  $b_f$ and $a_f$ we used so far in $F=\langle a,b,c \rangle$.
Namely, in the free group $\langle m,n \rangle\cong F_2$ for any sequence $f\in \E$ (in particular, for any $f \in \mathcal T$) introduce the word 
$w_f(m,n) = m^{f(0)} n^{f(1)} \cdots m^{f(2r)} n^{f(2r\!+\!1)}$.
For example, if $f=(3,5,2)=(3,5,2,0)$, then $w_f(m,n) = m^{3}n^{5}m^{2} = m^{3}n^{5}m^{2}n^{0}$.
Then inside a larger free group $\langle z,m,n \rangle\cong F_3$ denote 
$Z_{\mathcal T}= \big\langle z^{w_f(m,n)} \mathrel{|} f \in \mathcal T \big\rangle$.
In a similar way, in $\langle p,q \rangle \cong F_2$ introduce the word 
$w_f(p,q) = p^{f(0)} q^{f(1)} \cdots p^{f(2r)} q^{f(2r\!+\!1)}$\!,\, and define the subgroup
$W_{\mathcal T}  = 
\langle w_f(p,q) \mathrel{|} f \in \mathcal T \rangle$.

\medskip
The free group $\langle z, m, n\rangle$ admits an isomorphism $\mu$ sending $z, m, n$ to $z, n, m$ (it just swaps $m$ and $n$), and using it we build the HNN-extension $\mathscr{K}= \langle z, m, n\rangle *_{\mu} \kappa$. 
Since $F=\langle a, b, c\rangle$ is free, there is an injection $\phi:F \to \mathscr{K}$ sending $a, b, c$ to $z, m, \kappa$. It is very easy to see that for each $f$ we have $\phi(a_f) = z^{w_f(m,n)}$\!.\, For instance, if $f=(3,5,4,7)$,  we have:
$$
a_f= a^{b_f}=
a^{
\big(b^{(c^0)}\big)^{\! 3}
\big(b^{(c^1)}\big)^{\! 5}
\big(b^{(c^2)}\big)^{\! 4}
\big(b^{(c^3)}\big)^{\! 7}
}
\xrightarrow{\phantom{-} \phi \phantom{-}}
\;
z^{
\big(m^{(\kappa^{\,0})}\big)^{\! 3}
\big(m^{(\kappa^{\,1})}\big)^{\! 5}
\big(m^{(\kappa^{\,2})}\big)^{\! 4}
\big(m^{(\kappa^{\,3})}\big)^{\! 7}
}
\!\!\!\!
=z^{
m^3 n^5 m^4 n^7
}
$$
because
$\big(m^{(\kappa^{\,0})}\big)^{\! 3}
\!\!=\!\big(\mu^0(m) \big)^{\! 3}\!=\!m^3$,\,
$\big(m^{(\kappa^{\,1})}\big)^{\! 7}
\!\!=\!\big(\mu(m)\big)^{\! 7}\!=\!n^7$,\,
$\big(m^{(\kappa^{\,2})}\big)^{\! 4}
\!\!=\!\big(\mu^2(m) \big)^{\! 4}\!=\!m^4$
and
$\big(m^{(\kappa^{\,3})}\big)^{\! 7}
\!\!=\!\big(\mu^3(m)\big)^{\! 7}\!=\!n^7$.
Hence, $Z_{\mathcal T}$ is the image of 
$A_{\mathcal T}$ under $\phi$, and we can use a trick similar to what we used 
in point~\ref{SU benign A sigma omega B}. For now we just assume the finitely presented group $K_{\mathcal T}$ and its finitely generated subgroup $L_{\mathcal T}$ are \textit{known} for $\mathcal T$ (without using their \textit{actual structure} found in point~\ref{SU Obtaining a larger set}).
The direct product $K_{\mathcal T} \times \mathscr{K}$ is finitely presented, and its subgroup $Q=\big\langle
 \big(w,\phi(w)\big) \mathrel{|} 
 w \in   F\big\rangle$
is just 3-generator.
Hence it is benign in 
$F \times \mathscr{K}$ for the finitely presented $F \times \mathscr{K}$ and finitely generated $Q$. 
In analogy to point~\ref{SU benign A sigma omega B} we modify $Q$ via a few steps to arrive to $Z_{\mathcal T}$ wanted.

$A_{\mathcal T} \times \mathscr{K}$ is benign in $F \times \mathscr{K}$ for the finitely presented $K_{\mathcal T} \times \mathscr{K}$ and finitely generated $L_{\mathcal T} \times \mathscr{K}$.
Hence by Corollary~\ref{CO intersection and join are benign multi-dimensional}\;\eqref{PO 1 CO intersection and join are benign multi-dimensional} the intersection $Q_1=(A_{\mathcal T} \times \mathscr{K})\cap Q=
\big\langle
\big(w,\phi(w)\big) \mathrel{|} 
 w \in   A_{\mathcal T}\big\rangle
$ is benign in $F \times \mathscr{K}$ for the finitely presented $\bigast$-construction:
$$
K_{Q_1} =
\big((K_{\mathcal T} \times \mathscr{K})\;*_{L_{\mathcal T} \times \mathscr{K}} q_1\big)\,
*_{F \times \mathscr{K}}
\big((F \times \mathscr{K})\;*_{Q} q_2\big)
$$
and for the finitely generated subgroup $L_{Q_1} =(F \times \mathscr{K})^{q_1 q_2}$ of the latter. 

Next, $F \cong F \times \1$ is benign in 
$F \times \mathscr{K}$ for the finitely presented $F \times \mathscr{K}$ and finitely generated $F \times \1$.  Hence the join 
$
Q_2=\big\langle F \times \1 ,\, Q_1 \big\rangle
=F \times \langle
\phi(w) \mathrel{|} 
w \in   A_{\mathcal T}\rangle
$ is benign in $F \times \mathscr{K}$ for the finitely presented $\bigast$-construction:
$$
K_{Q_2}=
\big((F \times \mathscr{K})\,*_{F \times \1} q_3\big) 
\,*_{F \times \mathscr{K}}
(K_{Q_1} *_{(F \times \mathscr{K})^{q_1 q_2}} \;q_4),
$$
and for the finitely generated subgroup 
$L_{Q_2} =\big\langle(F \times \mathscr{K})^{q_3},\;(F \times \mathscr{K})^{q_4} \big\rangle$ in $K_{Q_2}$.

Further,  
$\mathscr{K} = \1 \times  \mathscr{K}$ is benign in 
$F \times \mathscr{K}$ for the finitely presented $F \times \mathscr{K}$ and finitely generated $\1 \times  \mathscr{K}$.  Hence, the intersection
$$
(\1 \times  \mathscr{K}) \cap Q_2 
= \langle
\phi(w) \mathrel{|} 
w \in   A_{\mathcal T}\rangle
=\phi(A_{\mathcal T})
=Z_{\mathcal T}
$$ 
is benign in $F \times \mathscr{K}$ for the finitely presented $\bigast$-construction:
$$
K_{Z_{\mathcal T}} =
K_{\phi (\mathcal T)} =
\big((F \times \mathscr{K})\,*_{\1 \times  \mathscr{K}} q_5\big) 
\,*_{F \times \mathscr{K} }
(K_{Q_2} *_{\langle(F \times \mathscr{K})^{q_3},\;(F \times \mathscr{K})^{q_4} \rangle} q_6),
$$
and for finitely generated subgroup
$L_{Z_{\mathcal T}} =L_{\phi (\mathcal B)}
=(F \times \mathscr{K})^{q_5 q_6}$.
Hence, $Z_{\mathcal T}=\phi(A_{\mathcal T})$ is benign in $\mathscr{K}$ for the above chosen $K_{Z_{\mathcal T}}$ and $L_{Z_{\mathcal T}}$. 
But since $Z_{\mathcal T}$ is inside $\langle z,m,n \rangle$, 
we have $\langle z,m,n \rangle \cap L_{Z_{\mathcal T}} = Z_{\mathcal T}$, that is $Z_{\mathcal T}$ also is benign in  $\langle z,m,n \rangle$ for the same $K_{Z_{\mathcal T}}$ and $L_{Z_{\mathcal T}}$.

\subsection{Writing $K_{Z_{\mathcal T}}$  by generators and defining relations}
\label{SU Writing KZT by generators and defining relations} 

Using definitions of $\mathscr{K}$, $Q$, $\varphi$, $K_{Q_1}$ in point~\ref{SU If AB is benign then ZB also is benign} and of 
$L_{\mathcal T}$ in point~\ref{SU Writing K_F and K_T by generators and defining relations} we have:
\begin{equation}
\label{EQ relations Q1}
\begin{split}
K_{Q_1} 
& = \big\langle
\text{$74$ generators of $K_{\mathcal T}$ from \eqref{EQ relations KТ}};\;\;\;  z, m, n, \kappa; \;\;  q_1, q_2
\mathrel{\;\;|\;\;}\\
& \hskip11mm 
\text{$499$ relations of $K_{\mathcal T}$ from \eqref{EQ relations KТ}};
\;\;\; 
\text{$\kappa$ sends $z, m, n$ to $z, n, m$};\\[-2pt]
& \hskip11mm 
\text{each of $z, m, n, \kappa$ commutes with $74$ generators of $K_{\mathcal T}$ from \eqref{EQ relations KТ}};\\[-2pt]
& \hskip11mm 
\text{$q_1$ fixes $a^{z_1 z_2}\!,\; b^{z_1 z_2}\!,\; c^{z_1 z_2}$ and $z, n, m, \kappa$}; \;\;\;\; 
\text{$q_2$ fixes $az,\, bm,\, c \kappa$}
\big\rangle.
\end{split}
\end{equation} 
$K_{Q_1}$ has $74+4+2=80$ generators and $499+3+4\cdot 74 +7 +3 = 808$ relations.
And 
$L_{Q_1} =(F \times \mathscr{K})^{q_1 q_2}$ is a $7$-generator group. 

Next, using definition of $K_{Q_2}$ in point~\ref{SU If AB is benign then ZB also is benign} we write:
\begin{equation}
\label{EQ relations Q2}
\begin{split}
K_{Q_2} 
& = \big\langle
\text{$80$ generators of $K_{Q_1}$ from \eqref{EQ relations Q1}};\;\;\;  q_3, q_4
\mathrel{\;\;|\;\;}\\
& \hskip11mm 
\text{$808$ relations of $K_{Q_1}$ from \eqref{EQ relations Q1}}; \;\;\;
\text{$q_3$ fixes $a,b,c$};\\[-2pt]
& \hskip11mm 
\text{$q_4$ fixes conjugates of $a,b,c;\;  z, m, n, \kappa$ by $q_1 q_2$}
\big\rangle.
\end{split}
\end{equation}
$K_{Q_2}$ has $80+2=82$ generators and $808+3+7 = 818$ relations. 
$L_{Q_2} =\big\langle(F \times \mathscr{K})^{q_3},\;(F \times \mathscr{K})^{q_4} \big\rangle$  is a $14$-generator group.

Finally, by definition of $K_{Z_{\mathcal T}}$
again in point~\ref{SU If AB is benign then ZB also is benign} we have:
\begin{equation}
\label{EQ relations KZT}
\begin{split}
K_{Z_{\mathcal T}} 
& = \big\langle
\text{$82$ generators of $K_{Q_2}$ from \eqref{EQ relations Q2}};\;\;\;  q_5, q_6
\mathrel{\;\;|\;\;}\\
& \hskip11mm 
\text{$818$ relations of $K_{Q_2}$ from \eqref{EQ relations Q2}}; \;\;\;
\text{$q_5$ fixes $z, m, n, \kappa$};\\[-2pt]
& \hskip11mm 
\text{$q_6$ fixes conjugates of $a,b,c;\;  z, m, n, \kappa$ by $q_3$ and by $q_4$}
\big\rangle.
\end{split}
\end{equation}
$K_{Z_{\mathcal T}}$ has $82+2=84$ generators and $818+4+2\cdot 7 = 836$ relations.
And 
$L_{Z_{\mathcal T}} =L_{\phi (\mathcal B)}
=(F \times \mathscr{K})^{q_5 q_6}$
 is a $14$-generator group.

\subsection{If $Z_{\mathcal T}$ is benign in $\langle z, m, n\rangle$, then $W_{\mathcal T}$ is benign in $\langle p,q \rangle$}
\label{SU If ZT is benign then W(p,q) also is benign}

Next suppose $Z_{\mathcal T}$ is  benign in  $\langle z,m,n \rangle$ for some finitely presented $K_{Z_{\mathcal T}}$ and its finitely generated subgroup $L_{Z_{\mathcal T}}$, in particular, these can be the groups from previous point.
Let us show that the subgroup $W_{\mathcal T} = 
\langle w_f(p,q) \mathrel{|} f \in \mathcal T \rangle$ is benign in the free group $\langle p,q \rangle \cong F_2$. 

For an infinite cycle $\langle u \rangle$ the free product $K_{Z_{\mathcal T}} * \langle u \rangle$ contains the free subgroup $\langle z,m,n \rangle * \langle u \rangle=\langle z,m,n, u \rangle \cong F_4$. From the normal form of elements in free products it is trivial that in $K_{Z_{\mathcal T}}\! * \langle u \rangle$ the equality $\langle z,m,n, u \rangle \cap L_{Z_{\mathcal T}}\!=Z_{\mathcal T}$ holds, i.e., $Z_{\mathcal T}$ is benign in $\langle z,m,n, u \rangle$ for the finitely presented $K_{Z_{\mathcal T}}\! * \langle u \rangle$ and its finitely generated subgroup $L_{Z_{\mathcal T}}$.

In $\langle z,m,n, u \rangle$ the words $u^{z^{w(m,n)}}$ freely generate subgroup of countable rank. Here $w(m,n)$ run through the set of \textit{all possible} words on $m,n$ (we do not \textit{yet} put any restriction depending on $\mathcal T$).
For another infinite cycle $\langle v \rangle$ in the free product $\langle p,q\rangle * \langle v \rangle=\langle p,q,v \rangle \cong F_3$ select the countable many free generators $v\cdot w(p,q)$. 
The map $\theta$ sending each $u^{z^{w(m,n)}}\!$ to $v\cdot w(p,q)$ can be continued to an isomorphism between two isomorphic (free) subgroups of $K_{Z_{\mathcal T}}\! * \langle u \rangle$ and $\langle p,q,v \rangle$, and this lets us define the following free products with amalgamation:
$$
\bar{\mathscr{P}}
=
\big( K_{Z_{\mathcal T}}*\langle u \rangle \big)
*_{\theta}
\langle p,q,v \rangle
,\quad\quad\quad
\mathscr{P}
=
\langle z,m,n,u\rangle 
*_{\theta}
\langle p,q,v \rangle
$$  
where $\mathscr{P}\! \le\! \bar{\mathscr{P}}$.
Since the words $v\!\cdot\! w(p,q)$ generate  $\langle p,q,v \rangle$, the latter is inside $K_{Z_{\mathcal T}}\!\!*\langle u \rangle$, and we in fact have $\bar{\mathscr{P}}
\cong
K_{Z_{\mathcal T}}*\langle u \rangle$ and $\mathscr{P}
\cong
\langle z,m,n,u\rangle$, that is, $\bar{\mathscr{P}}$ can be understood as the group $K_{Z_{\mathcal T}}*\langle u \rangle$ in which we denoted  
$u^z$ by $v$,
denoted each 
$u^{z^{w(m,n)}}$ by $v\cdot w(p,q)$, and then added the new relations 
$u^{z^{w(m,n)}}\!\!=\,v\cdot w(p,q)$ in order \textit{to mimic} the isomorphism $\theta$. 

\smallskip
Further, $\langle z,m,n,u\rangle$ admits an isomorphism sending $z,m,n,u$ to $z^m\!,\, m,n,u$, and $\langle p,q,v \rangle$ admits an isomorphism sending $p,q,v$ to $p,q,v\cdot p$. These two isomorphisms agree on words of type $u^{z^{w(m,n)}}$ and $v\cdot w(p,q)$, and so they have a common continuation $\eta_1$ on the whole $\mathscr{P}$. We can similarly define an isomorphism $\eta_2$ on $\mathscr{P}$ sending 
 $z,m,n,u$ to $z^n\!,\, m,n,u$
 and 
$p,q,v$ to $p,q,v\cdot q$.
Using these isomorphisms define the finitely generated HNN-extensions:
$$
\bar{\mathscr{R}} = \bar{\mathscr{P}} *_{\eta_1,\eta_2} (\kappa_1, \kappa_2)
,\quad\quad\quad
\mathscr{R} = \mathscr{P} *_{\eta_1,\eta_2} (\kappa_1, \kappa_2)
$$
with $\mathscr{R} \le \bar{\mathscr{R}}$.
Then $\bar{\mathscr{R}}$ can be given by the relations:
\begin{enumerate}
\item the \textit{finitely} many relations of $K_{Z_{\mathcal T}}$;

\item the \textit{infinitely} many relations for $\theta$, i.e., those stating  $u^{z^{w(m,n)}}\!\!=v\cdot w(p,q)$ for all $w$;

\item the \textit{finitely} many relations telling the images of $7$ generators $z,m,n,u;p,q,v$  under isomorphisms $\eta_1$ and $\eta_2$.
\end{enumerate}

It is easy to see that the relations of point (2) are mainly redundant, and they can  be replaced by a \textit{single} relation $u^z=v$ which is $u^{z^{w(m,n)}}\!\!=v\cdot w(p,q)$ for the trivial word $w(m,n)=1$. The routine of verification is simple, and we display the idea just by an example. For, say, the word $w(m,n) = w_f(m,n) = m^3 n^5 m^4 n^7$ from point~\ref{SU If AB is benign then ZB also is benign} we from $u^z=v$ deduce:
\begin{equation}
\label{EQ 3 5 4 7}
u^{z^{m^3 n^5 m^4 n^7}}\!\! = v\cdot p^3 q^5 p^4 q^7
\end{equation}
in the following way. 
Putting the exponents in the above word $w$ in \textit{reverse order} write down the word $w'=w'(\kappa_1, \kappa_2) 
= \kappa_2^7 \kappa_1^4 \kappa_2^5 \kappa_1^3$ in stable letters $\kappa_1, \kappa_2$. Then:
\begin{equation}
\label{EQ kappa_2 kappa_1 first}
\begin{split}
\left(u^{z}\right)^{w'} \!&  
= \left(u^{z}\right)^{\kappa_2^{\boldsymbol 7} \kappa_1^4 \kappa_2^5 \kappa_1^3}
= \left((u^{\kappa_2})^{z^{\kappa_2}}\right)^{\kappa_2^{\boldsymbol 6} \kappa_1^4 \kappa_2^5 \kappa_1^3} 
= 
\left(\eta_2(u)^{\eta_2(z)}\right)^{\kappa_2^{\boldsymbol 6} \kappa_1^4 \kappa_2^5 \kappa_1^3}
= 
\left(u^{z^n}\right)^{\kappa_2^{\boldsymbol 6} \kappa_1^4 \kappa_2^5 \kappa_1^3}
\\
& 
= \big(u^{z^{n^7}}\big)^{\kappa_1^4 \kappa_2^5 \kappa_1^3}
= \big(u^{z^{m^4 n^7}}\big)^{\kappa_2^5 \kappa_1^3}
= \big(u^{z^{n^5 m^4 n^7}}\big)^{\kappa_1^3}
= u^{z^{m^3 n^5 m^4 n^7}};
\end{split}
\end{equation}
\begin{equation}
\label{EQ kappa_2 kappa_1 second}
\begin{split}
\hskip-1mm \left(v\right)^{w'} \!&  
= \left(v\right)^{\kappa_2^{\boldsymbol 7} \kappa_1^4 \kappa_2^5 \kappa_1^3}
= \left(v^{\kappa_2}\right)^{\kappa_2^{\boldsymbol 6} \kappa_1^4 \kappa_2^5 \kappa_1^3} 
= 
\left(\eta_2(v)\right)^{\kappa_2^{\boldsymbol 6} \kappa_1^4 \kappa_2^5 \kappa_1^3}
= 
\left(v \cdot q\right)^{\kappa_2^{\boldsymbol 6} \kappa_1^4 \kappa_2^5 \kappa_1^3}
\\
& 
= \left(v \cdot q^7\right)^{\kappa_1^4 \kappa_2^5 \kappa_1^3}
= \left(v \cdot p^4 q^7\right)^{\kappa_2^5 \kappa_1^3}
= \left(v \cdot q^5 p^4 q^7\right)^{\kappa_1^3}
= v \cdot p^3 q^5 p^4 q^7.
\end{split}
\end{equation}
So from $u^z = v$ and equalities \eqref{EQ kappa_2 kappa_1 first} and \eqref{EQ kappa_2 kappa_1 second} follows \eqref{EQ 3 5 4 7}, that is, $\bar{\mathscr{R}}$ is finitely presented.

We see that 
$Z_{\mathcal T}= \big\langle z^{w_f(m,n)} \mathrel{|} f \in \mathcal T \big\rangle$ 
is benign also in  $\mathscr{P}
=
\langle z,m,n,u\rangle 
*_{\theta}
\langle p,q,v \rangle$ for the finitely presented group $\bar{\mathscr{R}}$, and for its finitely generated subgroup $L_{Z_{\mathcal T}}$ (we do not change this subgroup after the previous step).

The finitely generated subgroup $\langle u,v \rangle$ trivially is benign in 
$\mathscr{P}$ for the finitely presented $\bar{\mathscr{R}}$, and for its finitely generated subgroup $\langle u,v \rangle$.
Hence by Corollary~\ref{CO intersection and join are benign multi-dimensional}\;\eqref{PO 2 CO intersection and join are benign multi-dimensional} the join $Z^+_{\mathcal T} = \big\langle z^{w_f(m,n)};\, u, v \mathrel{|} f \in \mathcal T \big\rangle$ is benign in $\mathscr{P}$ for the finitely presented $\bigast$-construction:
$$
K_{Z^+_{\mathcal T}}
=
\big(\bar{\mathscr{R}} *_{L_{Z_{\mathcal T}}} e_1\big) *_{\bar{\mathscr{R}}}
\big(\bar{\mathscr{R}} *_{\langle u,v \rangle} e_2\big),
$$
and for its finitely generated subgroup
$
L_{Z^+_{\mathcal T}}
=
\big\langle 
\mathscr{P}^{e_1}\!,\;
\mathscr{P}^{e_2}\!
\big\rangle
$.

Further, the finitely generated subgroup $\langle p,q \rangle$ is benign in 
$\mathscr{P}$ for the finitely presented $\bar{\mathscr{R}}$, and for its finitely generated subgroup $\langle p,q \rangle$.
Hence, if we also establish the equality:
\begin{equation}
\label{EQ important equality here}
\big\langle z^{w_f(m,n)};\, u, v \mathrel{|} f \in \mathcal T \big\rangle
\cap 
\langle p,q \rangle
= Z^+_{\mathcal T}
\cap 
\langle p,q \rangle
= W_{\mathcal T}  = 
\big\langle w_f(p,q) \mathrel{|} f \in \mathcal T\, \big\rangle,
\end{equation}
then by Corollary~\ref{CO intersection and join are benign multi-dimensional}\;\eqref{PO 1 CO intersection and join are benign multi-dimensional} the desired intersection $W_{\mathcal T}$ of the above two benign subgroups $Z^+_{\mathcal T}$ and $\langle p,q \rangle$ is benign in $\mathscr{P}$ for the finitely presented $\bigast$-construction:
$$
K_{W_{\mathcal T}}
=
\big(K_{Z^+_{\mathcal T}} *_{L^+_{Z_{\mathcal T}}} f_1\big) *_{\bar{\mathscr{R}}}
\big(\bar{\mathscr{R}} *_{\langle p,q \rangle} f_2\big),
$$
and for its finitely generated subgroup
$
L_{W_{\mathcal T}}
=
\mathscr{P}^{f_1 f_2}
$.
But since $W_{\mathcal T}$ entirely is inside $\langle p,q \rangle$, then from 
$\mathscr{P} \cap L_{W_{\mathcal T}} = W_{\mathcal T}$ it follows $\langle p,q \rangle \cap L_{W_{\mathcal T}} = W_{\mathcal T}$, that is, $W_{\mathcal T}$ is benign in $\langle p,q \rangle$ for the same 
$K_{W_{\mathcal T}}$ and $L_{W_{\mathcal T}}$.
\,
To fully finish the proof it remains to argument the equality \eqref{EQ important equality here}.
For every $f$ and the respective word $w(p,q) = w_f(p,q)$ we have:
$$
w(p,q)= v^{-1} \cdot  v\cdot w(p,q)
= v^{-1} \cdot u^{z^{w(m,n)}} 
\in Z^+_{\mathcal T}.
$$
And to see that the left-hand side of \eqref{EQ important equality here} is in $W_{\mathcal T}$ it is enough to apply the conjugate collection process of Point \ref{SU The conjugates collecting process} for $\X=\{u,v \}
$ and for 
$\Y=\{w_f(p,q) \mathrel{|} f \in \mathcal T \}$.

\subsection{Writing $K_{W_{\mathcal T}}$  by generators and defining relations}
\label{SU Writing KWT by generators and defining relations} 

From definition of $\bar{\mathscr{P}}$ in point~\ref{SU If ZT is benign then W(p,q) also is benign} (including \textbf{infinitely} many relations $u^{z^{w(m,n)}} = v\cdot w(p,q)$) we have:
\begin{equation}
\label{EQ relations bar P}
\begin{split}
\bar{\mathscr{P}} 
& = \big\langle
\text{$84$ generators of $K_{Z_{\mathcal T}}$\! from \eqref{EQ relations KZT}};\;\;  u, p, q, v
\mathrel{\,|\,} 
\text{$836$ relations of $K_{Z_{\mathcal T}}$\! from \eqref{EQ relations KZT}};\\
& \hskip11mm 
\text{\textbf{infinitely} many relations $u^{z^{w(m,n)}}\!\! = v\cdot w(p,q)$} 
\text{ for \textbf{all} $w(m,n)\in \langle m,n \rangle$}
\big\rangle.
\end{split}
\end{equation}
$\bar{\mathscr{P}}$ has $84+4=88$ generators and infinitely many relations.

Next we reduce the infinitely many relations to finitely many relations. Namely, from definition of $\bar{\mathscr{R}}$, $\eta_1$, $\eta_2$, $\kappa_1$, $\kappa_2$ in point~\ref{SU If ZT is benign then W(p,q) also is benign} we get: 
\begin{equation}
\label{EQ relations bar R}
\begin{split}
\bar{\mathscr{R}} 
& = \big\langle
\text{$84$ generators of $K_{Z_{\mathcal T}}$ from \eqref{EQ relations KZT}};\;\;\;  u, p, q, v;\;\;\;  \kappa_1, \kappa_2
\mathrel{\;\;|\;\;}\\
& \hskip11mm 
\text{$836$ relations of $K_{Z_{\mathcal T}}$ from \eqref{EQ relations KZT}}; \;\;\;
\text{\textbf{a single}  relation $u^z=v$};\\
& \hskip11mm 
\text{$\kappa_1$ sends 
$z,m,n,u; p,q,v$ to $z^m\!,\, m,n,u, p,q,v\cdot p$};\\[-3pt]
& \hskip11mm 
\text{$\kappa_2$ sends 
$z,m,n,u; p,q,v$ to $z^n\!,\, m,n,u, p,q,v\cdot q$}
\big\rangle.
\end{split}
\end{equation}
$\bar{\mathscr{R}}$ has $84+4+2=90$ generators and $836+1+14+14=865$ relations.

Hence using definition of $K_{Z^+_{\mathcal T}}$ and the $14$-generator group $L_{Z_{\mathcal T}}$ in point~\ref{SU If ZT is benign then W(p,q) also is benign}:
\begin{equation}
\label{EQ relations KZT+}
\begin{split}
K_{Z^+_{\mathcal T}}
& = \big\langle
\text{$90$ generators of $\bar{\mathscr{R}}$ from \eqref{EQ relations bar R}};\;  e_1,e_2
\mathrel{\;|\;} 
\text{$865$ relations of $\bar{\mathscr{R}}$ from \eqref{EQ relations bar R}};\\
& \hskip11mm 
\text{$e_1$ fixes conjugates of $a,b,c;\;  z, m, n, \kappa$ by $q_5 q_6$}; \;\;\;
\text{$e_2$ fixes 
$u,v$}
\big\rangle.
\end{split}
\end{equation}
$K_{Z^+_{\mathcal T}}$ has $90+2=92$ generators and $865+7+2=874$ relations, and $
L_{Z^+_{\mathcal T}}
=
\big\langle 
\mathscr{P}^{e_1}\!,\;
\mathscr{P}^{e_2}\!
\big\rangle
$ is a $14$-generator group.

Finally, using the definition of $K_{W_{\mathcal T}}$, $L_{Z^+_{\mathcal T}}$, $\bar{\mathscr{R}}$ again from point~\ref{SU If ZT is benign then W(p,q) also is benign}, we arrive to:
\begin{equation}
\label{EQ relations KWT}
\begin{split}
K_{W_{\mathcal T}} \!
& = \big\langle
\text{$92$ generators of $K_{Z^+_{\mathcal T}}$ from \eqref{EQ relations KZT+}};\;  f_1,f_2
\mathrel{\;\;|\;\;} \\
& \hskip11mm 
\text{$874$ relations of $K_{Z^+_{\mathcal T}}$ from \eqref{EQ relations KZT+}};\\
& \hskip11mm 
\text{$f_1$ fixes conjugates of $a,b,c;\;  z, m, n, \kappa$ by $e_1$ and by $e_2$}; 
\;\; \text{$f_2$ fixes 
$p,q$}
\big\rangle.
\end{split}
\end{equation}
$K_{W_{\mathcal T}}$ has $92+2=94$ generators and $874+7+7+2=890$ relations.
And its finitely generated subgroup
$L_{W_{\mathcal T}}
= \mathscr{P}^{f_1 f_2}$ is a $7$-generator group.

\subsection{The ``Higman Rope Trick'' and the final construction of $\mathcal{Q}$}
\label{SU The Higman Rope Trick}

In point~\ref{SU Embedding Q into a 2-generator group} we embedded 
$\Q$ into a $2$-generator overgroup $T_\Q$ given in \eqref{EQ TG genetic code} via the defining relations $w_k = w_k(x,y)$ of \eqref{EQ relations for Q detailed}. In point~\ref{SU Getting the  Higman code from TQ} we wrote down the sequences $f_k$ of \eqref{EQ Higman code for Q}, and denoted their set by $\mathcal T$.
We proved that $A_{\mathcal T}$ generated by all $a_{f_k}$ is benign in 
$\langle a,b,c \rangle$, and the respective $W_{\mathcal T} = W_{\mathcal T}(p,q)$ generated by all words $w_k(p,q)$ is benign in $\langle p,q \rangle$. Notice that the word $w_k(p,q)$ is obtained from $w_k(x,y)$ by just replacing $x,y$ by $p,q$.
Specifically, let $W_{\mathcal T}(p,q)$ be benign in $\langle p,q \rangle$ for the finitely presented group $K=K_{W_{\mathcal T}}$ and for its finitely generated subgroup  $L=L_{W_{\mathcal T}}$ computed in point~\ref{SU If ZT is benign then W(p,q) also is benign}.

In $F_2 = \langle x,y \rangle$ denoting $\mathscr{Y}=\big\langle W_{\mathcal T}(x,y)\big\rangle^{F_2}$ we have $T_\Q=F_2/\mathscr{Y}$. Let us show that this factor group can be explicitly embedded into a finitely presented group.

Fix a new stable letter $t$, and build the (finitely presented) $K_{W_{\mathcal T}} \!*_{L_{W_{\mathcal T}}}\! t$, inside which the subgroup $K_{W_{\mathcal T}}$ and its conjugate $K_{W_{\mathcal T}}^t$ generate their free product with amalgamation
$
K_{W_{\mathcal T}} \!*_{L_{W_{\mathcal T}}}\!
K_{W_{\mathcal T}}^t
$.
Since $\langle p,q \rangle \cap L_{W_{\mathcal T}} = W_{\mathcal T} = W_{\mathcal T} (p,q)$, then by \cite[Corollary~3.2\;(1)]{Auxiliary free constructions for explicit embeddings}
the subgroup $\langle p,q \rangle$  and its conjugate $\langle p,q \rangle^t$ generate their free product with amalgamation: 
\begin{equation}
\label{EQ small free product with amalgamation}
\langle p,q \rangle *_{W_{\mathcal T}}
\langle p,q \rangle^t
\end{equation}

In the direct product:
$$
\big( K_{W_{\mathcal T}} \!*_{L_{W_{\mathcal T}}}\! t \big) \times T_\Q
=
\big( K_{W_{\mathcal T}} \!*_{L_{W_{\mathcal T}}}\! t\big) \times 
F_2/\mathscr{Y} 
$$
for \textit{every} word $v(p,q) \in \langle p,q \rangle$, not necessarily a relation of type $w_k(p,q)$, we have an isomorphism $\langle p,q \rangle \to F_2/\mathscr{Y}$ given by the rule:
$$
 v(p,q) = \big( v(p,q) ,\;\; 1\big) 
\to 
\big( v(p,q) ,\;\; \mathscr{Y} v(x,y) \big),
$$
and also the identical isomorphism $\langle p,q \rangle^t \to F_2/\mathscr{Y}$ given by the rule:
$$
 v(p,q)^t = \big( v(p,q)^t ,\;\; 1\big) 
\to 
\big( v(p,q)^t ,\;\; 1\big).
$$
For every word $v(p,q)\in W_{\mathcal T} = W_{\mathcal T} (p,q)$ (in particular, for \textit{each} of relations $v(p,q)=w_k(p,q)$), the element (the coset)
$\mathscr{Y} v(x,y)$ is trivial in $F_2/\mathscr{Y}=T_\Q$. Hence the above two isomorphisms agree on the amalgamated subgroup $W_{\mathcal T}$ in \eqref{EQ small free product with amalgamation} (notice that $v(p,q)=v(p,q)^t$ for all words  $v(p,q) \in W_{\mathcal T}$).
Hence these two isomorphisms have a common continuation $\sigma$ on the whole \eqref{EQ small free product with amalgamation} using which we can define the HNN-extension:
\begin{equation}
\label{EQ definitin of X}
\mathcal{Q}
=
\Big(\big( K_{W_{\mathcal T}} \!*_{L_{W_{\mathcal T}}} \!t \big) \times T_\Q \Big) *_\sigma s.
\end{equation}
This group $\mathcal{Q}$ contains $T_\Q$, and for later use denote this identical embedding by:
\begin{equation}
\label{EQ define beta}
\beta: T_\Q \to \mathcal{Q}.
\end{equation} 
$\mathcal{Q}$ clearly is finitely generated, and so the desired embedding of $\Q$ into a \textit{finitely presented} group will be achieved, if we show that $\mathcal{Q}$ can be given by finitely many relations.

Let us list the relations following from the definition in \eqref{EQ definitin of X}:
\begin{enumerate}
\item the \textit{finitely} many relations of $K_{W_{\mathcal T}}$;

\item the \textit{finitely} many relations stating that $t$ fixes the \textit{finitely} many generators of $L_{W_{\mathcal T}}$;

\item the \textit{finitely} many relations stating that both generators $x,y$ of $T_\Q$ commute with $t$ and with the \textit{finitely} many generators of $K_{W_{\mathcal T}}$;

\item the \textit{infinitely} many relations $w_k(x,y)$ for $T_\Q$.

\item the \textit{finitely} many relations stating the action of $s$ on all words in $\langle p,q \rangle$ and $\langle p,q \rangle^t$: only \textit{four} relations are actually needed:
$p^s\!=\! p\mathscr{Y} x$, \,
$q^s\!=\!q\mathscr{Y} y$, \,
$(p^t)^s\!=\! p^t$, \,
$(q^t)^s\!=\!q^t$.
\end{enumerate}
If we now are in able to show that the relations $w_k(x,y)$ of the point (4) above are redundant, then $\mathcal{Q}$ will turn out to be \textit{a finitely presented group}.

For every $w_k(x,y)$ the respective $w_k(p,q)$ is in $\langle p,q \rangle$, and $w_k(p,q)^t$ is in $\langle p,q \rangle^t$. Hence, from four relations of the point (5) above it follows that: 
$$
w_k(x,y)^s = \big( w_k(p,q) ,\;\; \mathscr{Y} w_k(x,y) \big),
\quad \quad  
\big(w_k(x,y)^t\big)^s = \big( w_k(p,q)^t ,\;\; \mathscr{Y} \big).
$$
But since $w_k(x,y)\in  W_{\mathcal T} (p,q)$, we have $w_k(x,y)= w_k(x,y)^t$,\; 
$w_k(p,q)= w_k(p,q)^t$ and hence also
$\mathscr{Y} w_k(x,y) = \mathscr{Y}$, that is, $w_k(x,y)$ indeed is a relation of $T_\Q$. 

\medskip    
Hence the group $\mathcal{Q}$ is finitely presented, and as the desired embedding of the group $\Q$ into a finitely presented group for Theorem~\ref{TH embedding of Q into GP group} we may take the composition:
\begin{equation}
\label{EQ define varphi}
\varphi: \Q \to \mathcal{Q}
\end{equation} 
of the embedding $\alpha: \Q \to  T_\Q$ from \eqref{EQ define alpha} with the embedding $\beta: T_\Q \to  \mathcal{Q}$ from \eqref{EQ define beta}.
Thus, sections \ref{SU Writing Higman code for the embedding}--\ref{SE The final embedding} are an example of application of the $H$-machine for the group $\Q$.

\subsection{Writing $\mathcal{Q}$ by generators and defining relations}
\label{SU Writing Q by generators and defining relations} 

From definition of $\mathcal{Q}$, $\sigma$, $t$, $s$ in point~\ref{SU The Higman Rope Trick}, and definition of $L_{W_{\mathcal T}}
=
\mathscr{P}^{f_1 f_2}$ 
in point~\ref{SU If ZT is benign then W(p,q) also is benign} we have:
\begin{equation}
\label{EQ relations Q}
\begin{split}
\mathcal{Q}
& = \big\langle
\text{$94$ generators of $K_{W_{\mathcal T}}$ from \eqref{EQ relations KWT}};\;  x,y,\, t,s
\mathrel{\;\;|\;\;} \\
& \hskip11mm 
\text{$890$ relations of $K_{W_{\mathcal T}}$ from \eqref{EQ relations KWT}};\\
& \hskip11mm 
\text{$t$ fixes conjugates of $a,b,c;\;  z, m, n, \kappa$ by $f_1 f_2$}; \\
& \hskip11mm \text{$x,y$ commute with $t$ and with $94$ generators of $K_{W_{\mathcal T}}$ from \eqref{EQ relations KWT}}; \\
& \hskip11mm  
\text{$s$ sends $p, q, p^t\!,\, q^t$ to $px,\, qy,\, p^t\!\!,\; q^t$}
\big\rangle.
\end{split}
\end{equation}
The final group $\mathcal{Q}$ has $94+4=98$ generators and $890+7+ 2\cdot 94 +2+ 4=1091$ relations. We will list them all in the next section.

\section{Explicit list of generators and relations for $\mathcal{Q}$ and $T_{\!\mathcal{Q}}$}
\label{SE Explicit list of generators and relations for Q for TQ}

\subsection{Writing the explicit  list of generators and relations for $\mathcal{Q}$}
\label{SU Writing the explicit  list of generators and relations for Q} 

Collecting data prepared in points \ref{SU Writing K_B by generators and defining relations}, 
\ref{SU Writing K_omega B by generators and defining relations}, 
\ref{SU Writing K sigma - and K_C by generators and defining relations}, 
\ref{SU Writing K_F and K_T by generators and defining relations},  
\ref{SU Writing KZT by generators and defining relations}, 
\ref{SU Writing KWT by generators and defining relations}, 
\ref{SU Writing Q by generators and defining relations} we explicitly  list sets of generators and of defining relations for the finitely presented overgroup $\mathcal{Q}$ of $\Q$. Namely, as \textit{generators} of $\mathcal{Q}$ take:

\noindent
$b, c, t_2, t'_2, t_0, t'_0, r_1, r_2,
g,h,k,\, l_{0,0},\, l_{1,0},\, l_{1,1},
s_0, s_1, s_2,\, a, r,t_1, t'_1, u_1, u_2, d,e, v_1, v_2, q_1, q_2, q_3, q_4,$ \\
$\bar b, \bar c, \bar t_2, \bar t'_2, \bar t_0, \bar t'_0, \bar r_1, \bar r_2,
\bar g,\bar h,\bar k,\, \bar l_{0,0},\, \bar l_{1,0},\, \bar l_{1,1},
\bar s_0, \bar s_1, \bar s_2,\, \bar a, \bar r, 
\bar t_1, \bar t'_1, \bar u_1, \bar u_2,  \bar d,\bar e, \bar v_1, \bar v_2,\;\; \bar q_1, \bar q_2, \bar q_3, \bar q_4, $\\ 
$w_1, w_2, w_3, w_4, w_5, w_6,x_1, x_2,y_1, y_2,z_1, z_2,z, m, n, \kappa,  q_1, q_2, q_3, q_4,q_5, q_6,  u, p, q, v,  \kappa_1, \kappa_2,e_1,e_2,$\\ 
$f_1,f_2, \; x,y, t,s$
\; ($98$ generators in total).

\smallskip

And as \textit{defining relations} for $\mathcal{Q}$ take:

\noindent
$b^{t_2}= b^{c^{-1}}\!\!,\;
b^{t'_2}= b^{c^{-2}}\!\!,\;
b^{t_0}= b^{c}\!,\;
b^{t'_0}= b,\; c^{t_2}=c^{t'_2}=c^{t_0}=c^{t'_0}=c^2$;\;\; 
\\
$r_1$ fixes $b_2, t_2, t'_2$;\;\;
$r_2$ fixes $b_{-1}, t_0, t'_0$ (see \eqref{EQ relations Z});

\noindent
each of $g,h,k$ fixes 
$b^{r_1}\!,\; c^{r_1}\!,\; b^{r_2}\!,\; c^{r_2}$;\;\;
$l_{0,0}$ sends $
b, g, h$ to $
b, g^{h}\!, h$;\;\;
\\
$l_{1,0}$ sends $
b^c\!, g, h, h^k$ to $b^c\!, g^{h}\!, h, h^k$;\;\;
$l_{1,1}$ sends $
b^c\!,\; g, h, h^k$ to $
b^c\!,\; g^{h^c}\!\!\!,\; h^{h^k}\!\!\!,\; h^k$;\;\;
$s_0$ fixes $g$;\;\;
\\
$s_1$ fixes $g^{h} 
b^{-1} 
g^{-1}\!\!,\; l_{0, 0}$;\;\;
$s_2$ fixes $g^{h^k} 
b^{-c} 
g^{-1}\!\!,\;\; l_{1,0},\;\;l_{1,1}$  (see \eqref{EQ relations F});

\noindent
$a$ fixes conjugates of 
$b, c, t_2, t'_2, t_0, t'_0, r_1, r_2,
g,h,k$ by each of $s_0, s_1, s_2$;\;\;
\\
$r$ sends $a,b,c$ to $a,b^{c^2}\!\!\!\!,\;c$
(see \eqref{EQ relations D});

\noindent
$b^{t_1}= b,\;
b^{t'_1}= b^{c^{-1}}\!\!,\;
c^{t_1}=c^{t'_1}=c^2$;\;\;
$u_1$ fixes $b^c\!, t_1, t'_1$;\;\;
$u_2$ fixes $a, b, t_1, t'_1$;\;\;
\\
$d$ fixes $a^{u_1}\!\!, b^{u_1}\!\!, c^{u_1}$ and sends $a^{u_2}\!\!, b^{u_2}\!\!, c^{u_2}$ to $a^{b u_2}\!, b^{u_2}\!, c^{b u_2}$;\;\;
$e$ sends $a,b,c$ to $a,b^c\!,\ c$
(see \eqref{EQ relations A});

\noindent
$v_1$ fixes $a,d^e$;\;\;$v_2$ fixes $a^b\!,\, d d^e$
(see \eqref{EQ relations B});

\noindent
$q_1$ fixes 
$a^{v_1}\!,\, b^{v_1}\!,\, c^{v_1}\!,\, a^{v_2}\!,\, b^{v_2}\!,\, c^{v_2}$;\;\; 
$q_2$ fixes $b,c$
(see \eqref{EQ relations L});

\noindent
$q_3$ fixes the conjugates of 
$b, c, t_2, t'_2, t_0, t'_0, r_1, r_2,
g,h,k, l_{0,0},\, l_{1,0},\, l_{1,1},
s_0, s_1, s_2,\, a, r$ by each of $p_1,\, p_2$;\;\; 
$q_4$ fixes $a,b,c$
(see \eqref{EQ relations K omega2 B});

\noindent
copies of above relations (from 
\eqref{EQ relations Z}, \eqref{EQ relations F}, \eqref{EQ relations D}, \eqref{EQ relations A}, \eqref{EQ relations B}, \eqref{EQ relations L}, \eqref{EQ relations K omega2 B}) with \textit{bars} on each letter, i.e., 
$\bar b^{\bar t_2}= \bar b^{\bar c^{-1}}\!\!,\;
\bar b^{\bar t'_2}= \bar b^{\bar c^{-2}}\!\!,\;
\bar b^{\bar t_0}= \bar b^{\bar c}\!,\;
\bar b^{\bar t'_0}= \bar b
; \; \ldots \; ;$
$\bar q_4$ fixes $\bar a,\bar b,\bar c$
(see \eqref{EQ 152 relations with bars});

\noindent
$a,b,c$ commute with each of $\bar b, \bar c, \bar t_2, \bar t'_2, \bar t_0, \bar t'_0, \bar r_1, \bar r_2,
\bar g,\bar h,\bar k,\, \bar l_{0,0},\, \bar l_{1,0},\, \bar l_{1,1},
\bar s_0, \bar s_1, \bar s_2,\, \bar a, \bar r, \bar t_1, \bar t'_1, $ $\bar u_1, \bar u_2, \bar d,\bar e, \bar v_1, \bar v_2,\;\; \bar q_1, \bar q_2, \bar q_3, \bar q_4$
(see \eqref{EQ relations K omega2 B x G});

\noindent
$w_1$ fixes $\bar b, \bar c, \bar t_2, \bar t'_2, \bar t_0, \bar t'_0, \bar r_1, \bar r_2,
\bar g,\bar h,\bar k,\, \bar l_{0,0},\, \bar l_{1,0},\, \bar l_{1,1},
\bar s_0, \bar s_1, \bar s_2,\, \bar a, \bar r;\;\; a,b,c$;\;\;
\\
$w_2$ fixes $\bar a a$,
$\bar b  b^{c^{-1}}$\!\!\!,\;
$\bar c c$
(see \eqref{EQ relations M1});

\noindent
$w_3$ fixes $\bar a, \bar b, \bar c$;\;\;  $w_4$ fixes $\bar a^{w_1 w_2}, \bar b^{w_1 w_2}, \bar c^{w_1 w_2}, 
a^{w_1 w_2}, b^{w_1 w_2}, c^{w_1 w_2}$ 
(see \eqref{EQ relations M2});

\noindent
$w_5$ fixes $a,b,c$;\;\; $w_6$ fixes conjugates of $\bar a, \bar b, \bar c,\; a,b,c$ by $w_3$ and by $w_4$
(see \eqref{EQ relations M3});

\noindent
$x_1$ fixes conjugates of each of 
$b, c, t_2, t'_2, t_0, t'_0, r_1, r_2,
g,h,k,\, l_{0,0},\, l_{1,0},\, l_{1,1},$
\\
$s_0, s_1, s_2,\, a, r$
 \,by\, $p_3 p_4$; \;\;
$x_2$ fixes conjugates of $\bar a, \bar b, \bar c,\, a,b,c $ by $w_5 w_6$
(see \eqref{EQ relations KC});

\noindent
$y_1$ fixes $a^{x_1 x_2}\!\!,\;
 b^{x_1 x_2}\!,\; 
 c^{x_1 x_2}$;\;\; $y_2$ fixes $a^{(b^2)^{c^5}}\!\!\!,\; d^{e^5}\! d^{e^6}$
(see \eqref{EQ relations KF});

\noindent
$z_1$ fixes $a^{y_1 y_2}\!,\; b^{y_1 y_2}\!,\; c^{y_1 y_2}$ and  $d^{e^i}$ for $i=0, \ldots, 4,\; 6, \ldots, 18$; \\
$z_2$ fixes 
$a_{f_2}\!=a^{
b\,( b^{-3})^{c}\, (b^{-1})^{c^{2}}\! (b^{-3})^{c^{3}}\! (b^{-1})^{c^{4}}\! (b^{3})^{c^{5}}\; b^{c^{6}}\; (b^{3})^{c^{7}}\; b^{c^{8}}\! b^{c^{9}}\! (b^{-1})^{c^{10}}\! (b^{-2})^{c^{11}}\! (b^{-1})^{c^{12}}\! (b^{-1})^{c^{13}}\! b^{c^{14}}\!(b^{2})^{c^{15}}\! b^{c^{16}}\! (b^{2})^{c^{17}} \!(b^{-1})^{c^{18}}
}$\!\!\!,\;
and $z_2$ fixes  
$d'' \!= (d^{-1})^{e} (d^{-1})^{e^3} d^{e^5} d^{e^7} \! (d^{-1})^{e^{11}}\! d^{e^{15}} \!d^{e^{17}}$\!
(see \eqref{EQ relations KТ});

\noindent
$\kappa$ sends $z, m, n$ to $z, n, m$;\;\;\; 
each of $z, m, n, \kappa$ commutes with each of $b, c, t_2, t'_2,$  $t_0, t'_0, r_1, r_2, $ $
g,h,k,\, l_{0,0},\, l_{1,0},\, l_{1,1},
s_0, s_1, s_2, a, r,t_1, t'_1, u_1, u_2, d,e, v_1, v_2, q_1, q_2, q_3, q_4,\, \bar b, \bar c, \bar t_2, \bar t'_2, \bar t_0, \bar t'_0, \bar r_1, \bar r_2,$\\ 
$\bar g,\bar h,\bar k,\, \bar l_{0,0},\, \bar l_{1,0},\, \bar l_{1,1},
\bar s_0, \bar s_1, \bar s_2,\, \bar a, \bar r, 
\bar t_1, \bar t'_1, \bar u_1, \bar u_2,  \bar d,\bar e, \bar v_1, \bar v_2,\;\; \bar q_1, \bar q_2, \bar q_3, \bar q_4, 
w_1, w_2,$\\ 
$ w_3, w_4, w_5, w_6,x_1, x_2,y_1, y_2,z_1, z_2$; \;\;
$q_1$ fixes $a^{z_1 z_2}\!,\; b^{z_1 z_2}\!,\; c^{z_1 z_2}$ and $z, n, m, \kappa$
(see \eqref{EQ relations Q1});

\noindent
$q_3$ fixes $a,b,c$;\;\;\; $q_4$ fixes conjugates of $a,b,c;\;  z, m, n, \kappa$ by $q_1 q_2$
(see \eqref{EQ relations Q2});

\noindent
$q_5$ fixes $z, m, n, \kappa$;\;\; $q_6$ fixes the conjugates of $a,b,c;\;  z, m, n, \kappa$ by $q_3$ and by $q_4$
(see \eqref{EQ relations KZT});

\noindent
$u^z=v$;\;\;\;
$\kappa_1$ sends 
$z,m,n,u; p,q,v$ to $z^m\!,\, m,n,u, p,q,v\cdot p$;\\
$\kappa_2$ sends 
$z,m,n,u; p,q,v$ to $z^n\!,\, m,n,u, p,q,v\cdot q$
(see \eqref{EQ relations bar R});

\noindent
$e_1$ fixes conjugates of $a,b,c;\;  z, m, n, \kappa$ by $q_5 q_6$;\; 
$e_2$ fixes $u,v$
(see \eqref{EQ relations KZT+});

\noindent
$f_1$ fixes the conjugates of $a,b,c, z, m, n, \kappa$ by $e_1$ and by $e_2$;\;\;\; 
$f_2$ fixes $p,q$
(see \eqref{EQ relations KWT});

\noindent 
$t$ fixes conjugates of $a,b,c;\;  z, m, n, \kappa$ by $f_1 f_2$; \;\; 
$x,y$ commute with each of 
$b, c, t_2, t'_2, t_0, t'_0,$ 
$ r_1, r_2, g,h,k,\, l_{0,0},\, l_{1,0},\, l_{1,1},
s_0, s_1, s_2,\, a, r,t_1, t'_1, u_1, u_2, d,e, v_1, v_2, q_1, q_2, q_3, q_4, \bar b, \bar c, \bar t_2, \bar t'_2, \bar t_0, \bar t'_0, $ \\
$ \bar r_1, \bar r_2,
\bar g,\bar h,\bar k,\, \bar l_{0,0},\, \bar l_{1,0},\, \bar l_{1,1},
\bar s_0, \bar s_1, \bar s_2,\, \bar a, \bar r, 
\bar t_1, \bar t'_1, \bar u_1, \bar u_2,  \bar d,\bar e, \bar v_1, \bar v_2,\;\; \bar q_1, \bar q_2, \bar q_3, \bar q_4, 
w_1, w_2, w_3, w_4, $ \\
$w_5, w_6,x_1, x_2,y_1, y_2,z_1, z_2,z, m, n, \kappa,  q_1, q_2, q_3, q_4,q_5, q_6, u, p, q, v,  \kappa_1, \kappa_2,e_1,e_2,f_1,f_2,\; t$;
\\
$s$ sends $p, q, p^t\!,\, q^t$ to $px, qy, p^t\!,\, q^t$ 
(see \eqref{EQ relations Q})
\hfill
($1091$ relations in total).


\subsection{Writing the explicit  list of generators and relations for $T_{\!\mathcal{Q}}$}
\label{SU Writing the explicit  list of generators and relations for T cal Q} 
Using the method from \cite{Embeddings using universal words} already discussed in point~\ref{SU Embedding Q into a 2-generator group} we can reduce the number of generators of $\mathcal Q$ to just $2$, preserving the same number of relations though.
Namely, let $\langle \x, \y \rangle$ be a free group with two new generators $\x, \y $.
In point~\ref{SU Embedding Q into a 2-generator group} we used (3.6) and Theorem 3.2 from \cite{Embeddings using universal words} because $\Q$ is \textit{torsion-free}. But the group ${\mathcal Q}$ is {\it not} torsion-free, and hence we have to use just a little bit longer formula (1.1) with Theorem 1.1 from \cite{Embeddings using universal words}. 
Namely, for $98$ generators $b, c, \ldots ,s$ of ${\mathcal Q}$ listed in point~\ref{SU Writing the explicit  list of generators and relations for Q}, using (1.1) in \cite{Embeddings using universal words} we introduce $98$ words:
$$
a_i(\x, \y)=\y^{(\x \y^{\,k})^{\,2} \y^{-1}} \y^{- \x}
\in \langle x,y \rangle, \quad k=1,2, \ldots , 98,
$$
in $\langle \x, \y \rangle$,
and define the map $\gamma$ sending the $k$'th generator of ${\mathcal Q}$ to the $k$'th word $a_i(\x, \y)$:
\begin{equation}
\label{EQ define gamma}
\gamma: b\to a_1(\x, \y),\;\;\; \gamma: c\to a_2(\x, \y)\,,\;\ldots\;, \gamma: s\to a_{98}(\x, \y).
\end{equation} 
Then define $T_{\!\mathcal{Q}}$ by two generators $\x, \y$ and by defining relations obtained from the defining relations of $\mathcal{Q}$ by replacing each $k$'th generator by its image $a_k(\x, \y)$. Say, the first relation $b^{t_2}= b^{c^{-1}}$\!\! of $\mathcal{Q}$ (which can be rewritten as $b^{t_2}b^{-c^{-1}}$) turns to:
$$
r_1(\x, \y)\!=\!\big(a_1(\x, \y)\big)^{a_3(\x, \y)}
\big(a_1(\x, \y)\big)^{-(a_2(\x, \y))^{\,-1}}  
\!\!\!\!\!\!= \big(\y^{(\x \y)^{\,2} \y^{-1}} \!\! \y^{- \x} \big)^{\y^{(\x \y^{\,3})^{\,2} \y^{-1}}  \y^{- \x}}
\!
\big(\y^{(\x \y)^{\,2} \y^{-1}} \! \y^{- \x} \big)^{-(\y^{(\x \y^{\,2})^{\,2} \y^{-1}} \! \y^{- \x})^{-1}}
$$
($b, c$ and $t_2$ respectively are the $1$'st, $2$'nd and $3$'rd generators of $\mathcal{Q}$).
Changing this way all the relations of $\mathcal{Q}$ we  write the $2$-generator group containing $\mathcal{Q}$, and hence $\Q$ also:
\begin{equation}
\label{EQ T cal Q}
T_{\!\mathcal{Q}}=\big\langle 
\x , \y  \mathrel{|}
r_1(\x, \y),\ldots,r_{1091}(\x, \y)
\big\rangle.
\end{equation}
The explicit embedding $\psi:\Q \to T_{\!\mathcal{Q}}$ is the composition of $\alpha, \beta,\gamma$,\, or of $\varphi$ and $\gamma$.

\begin{Remark}
\label{RE It is possible to reduce the above number of relations}
At the cost of significant complication of the proofs it is possible to reduce the above number of relations $1091$. However, we feel our goal has been achieved, and any further complexity of the argument is not reasonable.
\end{Remark}

\vskip-2mm

\medskip
\noindent 
E-mail:
\href{mailto:v.mikaelian@gmail.com}{v.mikaelian@gmail.com}
$\vphantom{b^{b^{b^{b^b}}}}$

\noindent 
Web: 
\href{https://www.researchgate.net/profile/Vahagn-Mikaelian}{researchgate.net/profile/Vahagn-Mikaelian}

\end{document}